\providecommand{\bysame}{\leavevmode\hbox to3em{\hrulefill}\thinspace}
\providecommand{\MR}{\relax\ifhmode\unskip\space\fi MR }
\providecommand{\href}[2]{#2}
\numberwithin{equation}{section}
\numberwithin{figure}{section}
\theoremstyle{plain}
\theoremstyle{plain}
\newtheorem{thm}{\protect\theoremname}
\theoremstyle{remark}
\newtheorem*{rem*}{\protect\remarkname}
\theoremstyle{remark}
\newtheorem{rem}[]{\protect\remarkname}
\theoremstyle{plain}
\newtheorem*{assumption*}{\protect\assumptionname}
\theoremstyle{plain}
\newtheorem{lem}[thm]{\protect\lemmaname}
\theoremstyle{plain}
\newtheorem{cor}[thm]{\protect\corollaryname}
\theoremstyle{plain}
\newtheorem{prop}[thm]{\protect\propositionname}
\theoremstyle{definition}
\newtheorem*{example*}{\protect\examplename}
\newcommand*\circled[1]{\tikz[baseline=(char.base)]{%
		\node[shape=circle,fill=white!20,draw,inner sep=2pt] (char) {#1};}}
\newcommand{\1}{\mathbbm{1}}
\renewcommand{\O}{\mathcal{O}}
\newcommand{\D}{\mathcal{D}}
\newcommand{\F}{\mathcal{F}}
\newcommand{\E}{\mathbb{E}}
\newcommand{\p}{\mathbb{P}}
\newcommand{\s}{\mathbb{S}}
\newcommand{\R}{\mathbb{R}}
\newcommand{\N}{\mathbb{N}}
\newcommand{\U}{{\mathrm U}}
\newcommand{\ov}[1]{\overline{#1}}
\newcommand{\eqd}{{\overset{d}{=}}}
\newcommand{\ve}{\varepsilon}
\newcommand{\FV}[1]{C_b{#1}}
\newcommand{\indep}{\scalebox{.9}[1.25]{$\perp \!\!\! \perp$}} 
\newcommand{\BrownianMotion}[7]{
	\draw[#6] (#1,#2)
	\foreach \x in {1,...,#3}
	{   -- ++(#4,rand*#5)
	}
	node[below right] {#7};
}
\providecommand{\algorithmname}{Algorithm}
\providecommand{\assumptionname}{Assumption}
\providecommand{\examplename}{Example}
\providecommand{\lemmaname}{Lemma}
\providecommand{\propositionname}{Proposition}
\providecommand{\remarkname}{Remark}
\providecommand{\corollaryname}{Corollary}
\providecommand{\theoremname}{Theorem}
\begin{document}
	
	\title[Joint density of a stable process and its supremum]{Joint density of the stable process and its supremum: regularity and upper bounds}
	
	\author{Jorge Gonz\'{a}lez C\'{a}zares, Arturo Kohatsu Higa \and Aleksandar Mijatovi\'{c}}
	
	\address{Department of Mathematics, University of Warwick, \& The Alan Turing Institute, UK}
	
	\email{jorge.gonzalez-cazares@warwick.ac.uk}
	
	\address{Ritsumeikan University, Nojihigashi, Kusatsu, Shiga, Japan}
	
	\email{khts00@fc.ritsumei.ac.jp}
	
	\address{Department of Statistics, University of Warwick, \& The Alan Turing Institute, UK}
	
	\email{a.mijatovic@warwick.ac.uk}
	
	\begin{abstract}
		This article uses a combination of three ideas from simulation to establish a nearly optimal polynomial upper bound for the joint density of the stable process and its associated supremum at a fixed time on the entire support of the joint law. The representation of the concave majorant of the stable process and the Chambers-Mallows-Stuck representation for stable laws are used to define an approximation of the random vector of interest.  An interpolation technique using multilevel Monte Carlo is applied to 
	 accelerate the approximation, allowing us to establish the infinite differentiability of the joint density as well as nearly optimal polynomial upper bounds for the joint mixed derivatives of any order.  
	\end{abstract}
	
	\maketitle
	
	\section{Introduction}
	\label{sec:intro}
	Let $(X_t)_{t\ge0}$ be a non-monotonic 
	$\alpha$-stable process with 
	$\alpha\in(0,2)$ and positivity parameter 
	\begin{align}
		\label{cond:rho}
		\rho:=\p(X_1>0)\in[1-1/\alpha,1/\alpha]\cap(0,1). 
	\end{align}
	For any fixed $T>0$, denote by $\ov{X}_T:=\sup_{s\in [0,T]}X_s$ its 
	supremum over the time interval $[0,T]$. Our main result, 
	Theorem~\ref{thm:density_bound} below, provides the regularity and upper 
	bounds for the joint density of $(X_T,\ov X_T)$ and its derivatives of any 
	order. These explicit polynomial bounds, valid on the entire support set of the 
	joint law, are nearly optimal. For a detailed explanation, see the discussion following Theorem~\ref{thm:density_bound}. 
	
	The joint law of 
	$(X_T,\ov X_T)$
	arises in the scaling limit of many stochastic models, including queues with 
	heavy-tailed workloads 
	(see~\cite[\S 5.2]{MR3379923} and the references therein). 
	In such cases,
	the bounds in Theorem~\ref{thm:density_bound} are necessary for the construction of 
	the asymptotic confidence intervals.
	Moreover, in some prediction problems (e.g.~\cite{bernyk2011}), regularity of the density of 
	$\ov{X}_T-X_T$, established in Theorem~\ref{thm:density_bound}, is important. 
	
	 Our approach is rooted in recent advances in simulation used to build an efficient approximation of the law of $(X_T,\ov X_T)$. More precisely, we use the representation of the concave majorant of a stable processes, recently applied in~\cite{LevySupSim} to construct a geometrically convergent simulation algorithm for sampling from the law of $(X_T,\ov X_T)$. In order to analyze the regularity of this joint law, we express the stable random variables arising in the concave majorant representation of the supremum $\ov X_T$ via the classical Chambers-Mallows-Stuck representation. To the best of our knowledge, this approach to study the regularity and upper bounds of the densities of the joint law differs from the probabilistic and analytical techniques applied in this context in the literature so far (see \href{https://youtu.be/x0n3Up9CxCA}{YouTube}~\cite{Presentation_AM} for a short presentation of our results and techniques).
	
	In general, it is well-known that the properties of the approximation do not necessarily persist in the limit (see \cite{bhat} for a comprehensive study in the case of the central limit theorem). 
	In our case, in order to establish regularity and achieve nearly optimal upper bounds of the limit law, we accelerate the convergence of the approximation procedure using ideas behind the multilevel Monte Carlo method. This method has been successfully applied in Monte Carlo estimation (see \cite{MR2436856} and the references in the  \href{https://people.maths.ox.ac.uk/gilesm/mlmc_community.html}{webpage}) to reduce the computational complexity of the algorithm for a pre-specified level of accuracy. In theoretical terms, we apply 
	 the multilevel idea as an interpolation methodology (we have not been able to find multilevel Monte Carlo methods used for this purpose in the literature). Other interpolation techniques applied to stochastic equations are found in \cite{bally}, see also the references therein. 
	In fact, the authors in~\cite{bally} use a different interpolation technique to obtain qualitative properties using approximation methods. In the examples they treat, it is hard to tell if they achieve optimal results. In our case, the near optimality is due to the geometrical convergence of the approximation of the joint law based on the concave majorant, see~\cite{LevySupSim}.
	
	

	To the best of our knowledge, only the regularity of the density of the 
	marginals of $(X_T,\ov X_T)$ has been considered so far.
	The first component $X_T$ follows a stable law, which is very well understood 
	(see e.g.~\cite{Sato} and the references therein). 
	In fact, it is known that the density of $X_T$
	has the following asymptotic behavior 
	\begin{align}
		\mathbb{P}({X}_T\in dx)\stackrel{x\to\infty}{\approx }
		Tx^{-\alpha-1}dx;\quad 
		\mathbb{P}({X}_T\in dx)\stackrel{x\to 0}{\approx } T^{-1/\alpha}dx.
		\label{eq:ax}
	\end{align}
	Even though 
	its law has been the focus of a number of papers over the past seven decades
	(starting with Darling~\cite{MR80393}, Heyde~\cite{MR251766} and 
	Bingham~\cite{MR0415780}),
		far less information is available about the density of the second component, $\ov X_T$,
	which is a functional of the path of a stable process.  
	 Most of the results about the law 
	of the supremum of a L\'evy process rely on the Wiener-Hopf 
	factorization and/or the equivalence with laws related to excursions of 
	reflected processes~\cite{MR3531705,MR4216728}. For example, in~\cite{MR3098676}, the author obtains 
	explicit formulae for the supremum  in the spectrally negative stable and 
	symmetric Cauchy cases. The smoothness of the density of the supremum 
	$\ov X_T$ is known, see e.g.~\cite[Thm~2.4~\&~Rem.~2.14]{MR3835481}.
	
	The papers~\cite{MR2402160,MR2599201} study the asymptotic behaviour 
	of the density of the supremum at infinity and at zero. In~\cite{MR2599201}, 
	the authors rely on  local times and excursion theory, the Wiener-Hopf 
	factorisation and a distributional connection between stable suprema and 
	stable meanders. 
	Power series expansions of the density of $\ov X_T$ have 
	been established in~\cite{MR2789582,MR3005012} in some particular situations. 
	Since stable processes are self-similar and Markov, results 
	in~\cite{MR3835481} can be used to deduce the asymptotic behaviour of 
	the density (and its derivatives) of $\ov{X}_T$, see the paragraph following 
	Corollary~\ref{cor:sup} below. 
	
	In short, the proofs of the results obtained so far in the literature rely on excursion theory or 
	the Wiener-Hopf factorisation. These methods
	exploit the independence of 
	$\ov{X}_{\mathrm{e}}$ and $\ov{X}_{\mathrm{e}}-X_{\mathrm{e}}$ over 
	an independent exponential time horizon $\mathrm{e}$. 
	The dependence of all of the above methods 
	on a number of specific analytical identities for the law of $\ov{X}_T$ makes 
	them hard to generalise to the law of $(X_T,\ov{X}_T)$.
	
	The result closer to our study are the asymptotics established in~\cite{MR2599201,MR3835481}: 
	\begin{align}
		\mathbb{P}(\ov{X}_T\in dy)\stackrel{y\to\infty}{\approx }
		Ty^{-\alpha-1}dy; \quad
		\mathbb{P}(\ov{X}_T\in dy)\stackrel{y\to 0}{\approx}
		T^{-\rho}y^{\alpha\rho-1}dy.  
		\label{eq:as}
	\end{align}
	Taking into consideration the asymptotics in \eqref{eq:ax}--\eqref{eq:as}, it is natural that the asymptotics for the law of $(X_T,\ov{X}_T)$ are determined by four sub-domains in the support  $\O:=\{(x,y)\in\R^2: y> \max\{x,0\}\}$.
	Our upper bound on the joint density and its derivatives, illustrated in Figure \ref{fig:regions} below, is close to optimal in the sense that we obtain such a result for any $\alpha'$ arbitrarily close to $\alpha$ featuring in\eqref{eq:ax}--\eqref{eq:as}. The reason why we are unable to obtain the result for the choice $\alpha'=\alpha$ is technical and due to the use of moments to bound tail behaviours in the spirit of Markov's and Chebyshev's inequalities.

	Malliavin calculus is a long 
	developed subject in the area of stochastic analysis of jump processes. 
	The ultimate goal of the general theory is to obtain an infinite dimensional 
	calculus with the view of investigating random quantities generated by the 
	jump process and, in particular, the regularity of the law of path 
	functionals of the process (see e.g.~\cite{bich,nualart2} for a general 
	reference). Notably, these theoretical developments in Malliavin 
	calculus have fallen short of the problem of the regularity of the density of 
	$\ov X_T$, because the supremum of a jump process (as a random variable) 
	appears not to depend smoothly on the underlying jumps. 
	An exception is the result in~\cite{Bouleau}, where the authors rely on the 
	Lipschitz property of the supremum functional to prove the existence 
	of a density for the supremum of a jump process in a general class, using the 
	so-called lent-particle method. However, since $\ov X_T$ is not a smooth 
	functional of the path, it is unclear how to apply these methods to analyse 
	the regularity and behavior of the density near the boundary of its support.

	The approach used in this article does not fall in any of the above categories of Malliavin Calculus, nor does it rely on any results from Malliavin Calculus of jump processes. More precisely, we do not use infinite dimensional objects but only study limits of finite collections of random variables, arising in the noise used in our representation of the law of $(X_T,\ov{X}_T)$. Our main underlying idea is to exploit the geometrically convergent approximation of the random vector of interest, 
	establish the required properties of the densities for the approximate 
	vectors and prove that these properties persist in the limit. In this sense, our approach is both self-contained and elementary.
	
	More specifically, we establish a probabilistic 
	representation for the joint density of $(X_T,\ov{X}_T)$ and its derivatives in  Theorem~\ref{thm:series} below,
	based on a telescoping sum of successive approximations analogous to the 
	multilevel method (cf.~\cite{MR2436856}). 
	The telescoping sum formula for the density and its derivatives is based on 
	an elementary integration-by-parts formula for successive finite dimensional approximations of 
	$(X_T,\ov{X}_T)$. These approximations are \textit{not} using the 
	path of the stable process $(X_t)_{t\in[0,T]}$ directly as would be the case in Malliaving Calculus for processes with jumps. Instead, the concave majorant of 
	$(X_t)_{t\in[0,T]}$, given in~\cite[Thm~1]{MR2978134}, is used to represent 
	$(X_T,\ov{X}_T)$ as an infinite series~\cite{LevySupSim,MR4032169}.
	The terms in this series are the increments of the stable process over 
	macroscopic (but geometrically small) time steps given by an independent 
	stick-breaking process on $[0,T]$ (for more details, see Section \ref{sec:3.1}). 
	We then build our elementary finite-dimensional integration-by-parts formulae for the partial sum 
	approximations of $(X_T,\ov{X}_T)$ using the scaling property of stable 
	increments and their Chambers-Mallows-Stuck representation~\cite{Weron}, which 
	in the non-Cauchy case $\alpha\ne 1$, amounts to a semi-linear function of independent 
	uniform and exponential variables,  Section~\ref{sec:3.1}.

	
	
	\subsection{Organisation}
	
	The remainder of the paper is organised as follows. In Section~\ref{sec:2} 
	we present Theorem~\ref{thm:density_bound}, the main result of the paper, 
	and some applications of these results. 
	Subsection~\ref{sec:3.1} introduces the technical
	notation for the proofs and Subsection~\ref{subsec:Ibpf} establishes the Ibpf. At the end of this section, we also give an important technical Proposition \ref{prop:Theta_bound} which gives all the bounds needed in order to be applied in the Ibpf formula obtained. 
	In Section \ref{sec:4}, we give the proof of our main result, Theorem~\ref{thm:density_bound}, using the ingredients developed in previous sections. This proof uses the interpolation method in the sense that the approximation method based on the convex majorant converges geometrically fast while the density bounds explode polynomially.  Combining these two characteristics one obtains the almost optimal bounds. 
	
	We close the article with some technical appendices which prove the important technical Proposition \ref{prop:Theta_bound}. The proof of this proposition is composed of algebraic inequalities  which are obtained in Subsection \ref{subsec:alg}. The upper bounds are products of powers of basic random variables. After the proof we give also a heuristic interpretation of a basic interpolation technique  used in the estimation of the moments.
 Finally, the moment estimates are obtained in Section \ref{sec:mom}. Throughout the article we concentrate on the case $\alpha\neq 1$ leaving the special Cauchy case, $\alpha=1$ to the Appendix \ref{app:cauchy}. 
	
	%
	%
	%
	
	Section~\ref{sec:conclusions} 
	concludes the paper, remarking on our techniques and methodology as well 
	as possible extensions. Appendices collects relevant bounds
	on the moments of a stick-breaking process and the moment generating function for powers of exponentially distributed random variables.
	
	\section{Main result and applications}
	\label{sec:2} 
	
	
	As explained in the Introduction, we give first our main result:
	\begin{thm}
		\label{thm:density_bound}
		Assume that $\alpha\in(0,2)$.
		Let $F(x,y):=\p(X_T\le x,\ov X_T\le y)$ be the distribution function of 
		$(X_T,\ov X_T)$. The joint density of $F$ exists and is infinitely differentiable 
		on the open set $\O$. Moreover, for any fixed 
		$n,m\ge 1$ and $\alpha'\in[0,\alpha)$ there is some $C>0$ such that for all 
		$x,y>0$ and $T>0$, we have 
		\begin{equation}
			\label{eq:density_at_infinity}
			\begin{split}
				|\partial_x^{n}\partial_y^{m}F(x,y)|
				&\le Cy^{-m}(y-x)^{1-n-m}(2y-x)^{m-1}\\
				&\qquad\times\min\big\{f^{00}_{\alpha'}(x,y),f^{01}_{\alpha'}(x,y),
				f^{10}_{\alpha'}(x,y),f^{11}_{\alpha'}(x,y)\big\},
			\end{split}
		\end{equation}
		where 
		$f^{ij}_{\alpha'}(x,y)
		:=T^{\frac{\alpha'}{\alpha}(i(2-\rho)+j(1+\rho)-1)}
		(y-x)^{\alpha'(1-\rho)-i\alpha'(2-\rho)}
		y^{\alpha'\rho-j\alpha'(1+\rho)}$
		for
		$i,j\in\{0,1\}$.
	\end{thm}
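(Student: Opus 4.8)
The plan is to prove the bound by combining the two mechanisms advertised in the introduction: the geometrically fast convergence (in a strong, derivative-sensitive sense) of the concave-majorant/stick-breaking approximation of $(X_T,\ov X_T)$, and the only polynomially growing integration-by-parts constants for the finite-dimensional approximants. First I would set up the approximation: using the concave majorant representation of \cite{MR2978134,LevySupSim,MR4032169}, write $(X_T,\ov X_T)=\sum_{k\ge 1}(\xi_k,\xi_k^+)$ where the $\xi_k$ are increments of $X$ over the stick-breaking intervals $(\ell_k)$ on $[0,T]$, and denote by $(X_T^{(N)},\ov X_T^{(N)})$ the partial sum over the first $N$ sticks plus an appropriate tail correction, so that the remainder decays geometrically in $N$. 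I would then invoke the finite-dimensional integration-by-parts formula of Subsection~\ref{subsec:Ibpf}, expressing each $\xi_k$ through its Chambers--Mallows--Stuck representation as a smooth (semi-linear in the $\alpha\ne1$ case) function of independent uniform and exponential variables; differentiating in the directions of those noise variables and integrating by parts yields, for each multi-index $(n,m)$, a representation of $\partial_x^n\partial_y^m$ of the law of the $N$-th approximant as an expectation of a weight $\Theta_N$ times an indicator, with $\Theta_N$ a product of powers of the basic random variables controlled by Proposition~\ref{prop:Theta_bound}.

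Next I would assemble the telescoping (multilevel) sum $\partial_x^n\partial_y^m F=\sum_{N\ge N_0}\big(\text{density of level }N-\text{density of level }N-1\big)$, exactly as in Theorem~\ref{thm:series}. The summand at level $N$ is controlled by two competing estimates: (i) a \emph{stability} bound, from Proposition~\ref{prop:Theta_bound} and the moment estimates of Section~\ref{sec:mom}, showing each individual level-$N$ density derivative is at most polynomial in $N$ (and carries the correct $x,y,T$ prefactors, namely $y^{-m}(y-x)^{1-n-m}(2y-x)^{m-1}$ times one of the $f^{ij}_{\alpha'}$), and (ii) a \emph{contraction} bound, showing the \emph{difference} of consecutive levels is the previous polynomial times a geometric factor $q^N$ with $q<1$, coming from the geometric decay of the stick-breaking remainder fed through the Ibpf weights. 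Summing a (polynomial in $N$)$\times q^N$ series converges and produces a finite constant $C$; this is precisely the interpolation step flagged in the Organisation section. Infinite differentiability on $\O$ then follows because the same argument applies for every $(n,m)$, the series of derivatives converging locally uniformly on $\O$.

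The four functions $f^{ij}_{\alpha'}$ and the replacement of $\alpha$ by any $\alpha'<\alpha$ enter through the tail bounds: to bound the expectation of $\Theta_N$ restricted to the event $\{X_T^{(N)}\approx x,\ \ov X_T^{(N)}\approx y\}$ one uses Markov/Chebyshev with a moment of order tied to $\alpha'$, which is why $\alpha'=\alpha$ is unreachable; the self-similarity of the stable increments produces the $T^{\frac{\alpha'}{\alpha}(\cdots)}$ scaling, and the four regimes $i,j\in\{0,1\}$ correspond to whether one estimates near the boundary $y=x$ (small jump to the supremum) or near $y=0$ (small supremum) in each of the two coordinates, matching the marginal asymptotics \eqref{eq:ax}--\eqref{eq:as}. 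Taking the minimum over the four choices is legitimate since each bound holds separately.

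The main obstacle I expect is step (i)--(ii): obtaining the Ibpf weight bounds of Proposition~\ref{prop:Theta_bound} with \emph{polynomial} (not exponential) growth in $N$ and with the sharp $x,y$-dependent prefactors, while simultaneously extracting a genuine geometric gain $q^N$ for the level differences. The difficulty is that differentiating the supremum functional through the concave-majorant representation forces division by small stick-lengths $\ell_k$ and by gaps like $y-x$ and $2y-x$ near the boundary of $\O$, so the negative powers of the basic random variables must be controlled by moment estimates (Section~\ref{sec:mom}, Appendices) that are uniform in $N$ after the geometric decay of the sticks is accounted for; balancing these competing powers is exactly the delicate algebra of Subsection~\ref{subsec:alg}.
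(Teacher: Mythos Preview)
Your plan is essentially the paper's own approach: telescoping/multilevel decomposition (Theorem~\ref{thm:series}), finite-dimensional Ibpf with weights controlled by Proposition~\ref{prop:Theta_bound}, and summing a (polynomial in $N$)$\times$(geometric in $N$) series. Two points, however, deserve correction.

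First, and most importantly, you are missing the change of variables that actually produces the prefactor $y^{-m}(y-x)^{1-n-m}(2y-x)^{m-1}$. The paper does \emph{not} run the Ibpf directly on $(X_T,\ov X_T)$: it works instead with $(X_+,X_-):=(\ov X_T,\ov X_T-X_T)$, whose law is supported on $\R_+^2$ and whose approximants $X_{\pm,n}$ have the clean product structure~\eqref{eq:xpm}. For $\widetilde F(x_+,x_-)=\p(X_+\le x_+,X_-\le x_-)$ one proves the ``diagonal'' bound
\[
|\partial_+^{k_+}\partial_-^{k_-}\widetilde F(x_+,x_-)|\le C\,x_+^{-k_+}x_-^{-k_-}\min\{\cdots\},
\]
which is what Proposition~\ref{prop:Theta_bound}(a)--(c) is designed to deliver. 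Only afterwards does one return to $(X_T,\ov X_T)$ via the linear map $(x,y)\mapsto(y,y-x)$, which gives
\[
\partial_x^n\partial_y^m F(x,y)=(-1)^{n-1}\sum_{i=0}^{m-1}\binom{m-1}{i}\partial_+^{m-i}\partial_-^{n+i}\widetilde F(y,y-x),
\]
and the binomial sum of $y^{i-m}(y-x)^{-n-i}$ is exactly $y^{-m}(y-x)^{1-n-m}(2y-x)^{m-1}$. If you try to run the Ibpf directly in the $(x,y)$ coordinates you will not see this factor emerge naturally, and the weight estimates become awkward because $X_T$ changes sign.

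Second, a smaller misconception: the Ibpf weights do not involve division by the stick lengths $\ell_k$. The operator $\D_m^\pm$ in~\eqref{eq:D} differentiates in the \emph{exponential} variables $E_i$ and $\eta_\pm$, and the resulting weight $H_{n,m}^\pm(\Phi)$ carries a factor $X_{\pm,n}^{-1}$, not $\ell_k^{-1}$; see~\eqref{eq:IBP_n}. The delicate negative-moment estimates (Lemmas~\ref{lem:inv-mom}--\ref{lem:inv_bound_X+X-Z}) are for $X_{\pm,n}^{-p}$, and this is precisely where the constraints $p<\alpha\rho$, $q<\alpha(1-\rho)$ arise, forcing $\alpha'<\alpha$.
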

	
	Theorem~\ref{thm:density_bound} presents a bound on the mixed derivatives 
	of the joint density of $(X_T,\ov X_T)$. The decay of the bound as $y$ 
	tends to either infinity or zero is almost sharp in the following sense: if one 
	sets $n=1$ and $\alpha'=\alpha$ in~\eqref{eq:density_at_infinity} 
	(cf. Figure~\ref{fig:regions} below)
	and integrates out 
	$x$ over $\R$, the decay of the obtained bound 
	matches the actual asymptotic behaviour of the density of $\ov{X}_T$ known 
	from the literature~\cite{MR2599201,MR2789582,MR3005012}. That is, marginals of the above bounds match the estimates in \eqref{eq:ax} and  \eqref{eq:as}. In fact, the bound in 
	Corollary~\ref{cor:sup} below is established in this way. The constant 
	$C$ in~\eqref{eq:density_at_infinity} can be made explicit. Instead of giving 
	a formula for $C$, which would be lengthy and suboptimal 
	(cf. Remark~\ref{rem:Theta_bound}(i) below), we point out that $(\alpha-\alpha')C$ 
	remains bounded as $\alpha'\uparrow\alpha$. An alternative way to understand the  optimality property is through a change of variables in equation  
	\eqref{eq:density_refl_proc} which will be proven in Section \ref{sec:4}.
	
	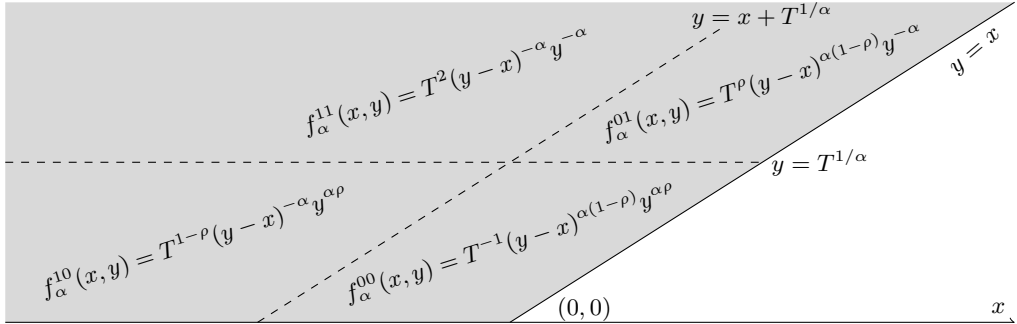
\begin{figure}[H]
		\begin{tikzpicture} 
			\begin{axis} 
				[xmin=-4,
				xmax=4,
				ymin=0,
				ymax=4.15,
				xlabel=$x$, 
				ylabel=$y$, 
				width=15cm,
				height=6cm,
				axis on top=true,
				ytick=\empty,
				xtick=\empty,
				xticklabels=none,
				yticklabels=none,
				axis x line=none, 
				axis y line=none, 
				legend pos=north east ] 
				
				\draw [draw=none,fill=gray!30] plot [smooth, samples=100, domain=0:4] (axis cs: \x, \x) -| (0,0) -- cycle;
				\draw [-, black, solid] (axis cs: 0,0) -- (axis cs: 4,4);
				\node [below] at (axis cs: 3.7,3.8) {\rotatebox{40}{\small $y=x$}};
				\draw [->, black, solid] (axis cs: -4,0) -- (axis cs: 4,0);
				\node [above left] at (axis cs: 4,0) {\small $x$};
				
				\draw [black, dashed] (axis cs: -2,0) -- (axis cs: 1.7,3.7);
				\draw [black, dashed] (axis cs: -4,2) -- (axis cs: 2,2);
				
				\node at (axis cs: 0,1) {\rotatebox{17.5}{\small$f^{00}_{\alpha}(x,y)
						=T^{-1}(y-x)^{\alpha(1-\rho)}y^{\alpha\rho}$}};
				\node at (axis cs: 2,3) {\rotatebox{17.5}{\small$f^{01}_{\alpha}(x,y)
						=T^{\rho}(y-x)^{\alpha(1-\rho)}y^{-\alpha}$}};
				\node at (axis cs: -2.5,1) {\rotatebox{17.5}{\small$f^{10}_{\alpha}(x,y)
						=T^{1-\rho}(y-x)^{-\alpha}y^{\alpha\rho}$}};
				\node at (axis cs: -.5,3) {\rotatebox{17.5}{\small$f^{11}_{\alpha}(x,y)
						=T^{2}(y-x)^{-\alpha}y^{-\alpha}$}};
				
				\node [above right] at (axis cs: .3,-.1) {\small$(0,0)$};
				
				\node [right] at (axis cs: 2,2) {\small $y=T^{1/\alpha}$};
				\node [below] at (axis cs: 2,4.12) {\small $y=x+T^{1/\alpha}$};
			\end{axis}
		\end{tikzpicture}
		\caption{\label{fig:regions}
			\small
			The set $\O=\{(x,y)\in\R^2: y> \max\{x,0\}\}$ (shaded in the figure) 
			is the support of the joint density of $(X_T,\ov{X}_T)$. 
			According to Theorem~\ref{thm:density_bound}, the support can be 
			partitioned into 4 sub-regions according to which of the 
			functions $f^{ij}_\alpha$, $i,j\in\{0,1\}$, is the smallest in the 
			(optimal) case $\alpha'=\alpha$.} 
	\end{figure}
	
	Theorem~\ref{thm:density_bound} above suggests that the asymptotic behaviour of the joint density at $(x,y)$ of $(X_T,\ov{X}_T)$ as $T\to 0$ is proportional to $T^2(y-x)^{-\alpha}y^{-\alpha}$, see Figure~\ref{fig:regions}. This is corroborated by the results in~\cite{MR3098676,MR4216728} as we now explain. Recall from~\cite[Thm~6]{MR3098676} that the density of $(\ov X_T,\ov X_T-X_T)$ satisfies  
	\begin{equation}
	\label{eq:Loic}
	\p(\ov{X}_T\in dx,\ov{X}_T-X_T\in dy)
	=dxdy\int_0^1q^*_{sT}(x)q_{(1-s)T}(y)Tds,
	\end{equation}
	where $q^*_t$ (resp. $q_t$) is the entrance density of the excursion measure of the reflected process of $X$ (resp. $-X$). By~\cite[Thm~3.1]{MR4216728} and~\cite[Ex.~3]{MR3098676}, we deduce that, as $T\to0$, the quantities $q^*_{sT}(x)/(T^\rho s^\rho x^{-\alpha-1})$
	and 
	$q^*_{(1-s)T}(x)/(T^{1-\rho}s^{1-\rho}y^{-\alpha-1})$ have positive finite limits that depend neither on $s$ nor $(x,y)$. Thus the integral on the right-hand side of~\eqref{eq:Loic} is proportional to $T^2x^{-\alpha-1}y^{-\alpha-1}$ as predicted the bound in Theorem~\ref{thm:density_bound} (see also~\eqref{eq:density_refl_proc} below).

	Setting $n=1$ and explicitly integrating in $y$ over $(0,\infty)$ yields the 
	following bounds.
	
	\begin{cor}
		\label{cor:sup}
		Assume that $ \alpha\in(0,2)$. Then the distribution function 
		$F(y):=\p(\ov{X}_T\le y)$ is infinitely smooth on $(0,\infty)$ and, for every 
		$\alpha'\in[0,\alpha)$ and $n\ge 1$, there exists some constant $C>0$ such 
		that for all $y>0$ and $T>0$, we have 
		\[
		|\partial_y^n F(y)|
		\le C y^{-n}\min\big\{T^{\frac{\alpha'}{\alpha}}y^{-\alpha'},
		T^{-\frac{\alpha'}{\alpha}\rho}y^{\alpha'\rho}\big\}.
		\]
		Define $ \tau_{y_0}:=\inf\{t>0: X_t>y_0\} $, $ y_0>0 $. Then the distribution function of $ \tau_{y_0} $ is infinitely smooth on $ (0,\infty) $ and the following estimate is satisfied for $ n\geq 1 $:
		\begin{align*}
			|\partial_T^n\p(\tau_{y_0}\leq T)|\leq CT^{-\frac 1\alpha-n}
			\times\min\{T^{\frac {\alpha'}{\alpha}}y_0^{-\alpha'} ,1\}.
		\end{align*}
	\end{cor}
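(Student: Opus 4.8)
Corollary~\ref{cor:sup} has two parts that are closely linked: in view of the identity $\p(\tau_{y_0}\le T)=\p(\ov X_T>y_0)=1-\p(\ov X_T\le y_0)$ together with the $(1/\alpha)$-self-similarity of $X$, which gives $\ov X_T\eqd T^{1/\alpha}\ov X_1$, my plan is to prove the statement for $\ov X_T$ first and then transfer it to $\tau_{y_0}$. For the density of $\ov X_T$ I would use that $\{\ov X_T\le y\}\subseteq\{X_T\le y\}$, so the distribution function $F(x,y)$ of Theorem~\ref{thm:density_bound} is constant in $x$ on $\{x\ge y\}$ and $F(y)=\p(\ov X_T\le y)=\lim_{x\to\infty}F(x,y)$; consequently the density of $\ov X_T$ is $g(y)=\int_{-\infty}^{y}\partial_x\partial_yF(x,y)\,dx$. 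In the case $n=1$ this integral can be estimated directly from the $n=m=1$ instance of~\eqref{eq:density_at_infinity}: one bounds the integrand by $f^{00}_{\alpha'}$ or $f^{01}_{\alpha'}$ on a neighbourhood of the diagonal $\{x=y\}$ (there the exponent $\alpha'(1-\rho)-1$ of $y-x$ exceeds $-1$, so the integral converges) and by $f^{10}_{\alpha'}$ or $f^{11}_{\alpha'}$ as $x\to-\infty$ (there the exponent $-\alpha'-1$ is below $-1$), and then evaluates the resulting elementary beta-type integrals in $x$; the transition $\{y-x=T^{1/\alpha}\}$ between the two regimes and the thresholds $y\lessgtr T^{1/\alpha}$ produce exactly the minimum $\min\{T^{\alpha'/\alpha}y^{-\alpha'},T^{-\alpha'\rho/\alpha}y^{\alpha'\rho}\}$ and the prefactor $y^{-1}$ claimed for $n=1$.

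For $n\ge2$ this elementary route fails, since near $\{x=y\}$ the integrand of $\int\partial_x\partial_y^nF(x,y)\,dx$ behaves like $(y-x)^{\alpha'(1-\rho)-n}$ and $\alpha'(1-\rho)<1$ pushes the exponent below $-1$. I would instead return to the representation underlying the proof of Theorem~\ref{thm:density_bound}: Theorem~\ref{thm:series} writes $\partial_x\partial_y^nF(x,y)$ as a geometrically convergent telescoping series of expectations of products of the Chambers--Mallows--Stuck variables attached to the stick-breaking partition. Integrating this series in $x$ over $(-\infty,y)$ term by term yields a corresponding representation of $\partial_y^nF(y)=\partial_y^{n-1}g(y)$; the $C^\infty$-convergence of the series gives infinite differentiability of $F$ on $(0,\infty)$, and the bound $|\partial_y^nF(y)|\le Cy^{-n}\min\{T^{\alpha'/\alpha}y^{-\alpha'},T^{-\alpha'\rho/\alpha}y^{\alpha'\rho}\}$ follows by feeding the moment estimates of Proposition~\ref{prop:Theta_bound} into the interpolation argument of Section~\ref{sec:4}, exactly as for the joint density. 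The precise powers of $y$ and $T$ are forced by the scaling $\ov X_T\eqd T^{1/\alpha}\ov X_1$; equivalently, writing $F(y)=\Psi(yT^{-1/\alpha})$ with $\Psi$ the distribution function of $\ov X_1$ reduces the problem to $T=1$.

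For the hitting time I would differentiate $\p(\tau_{y_0}\le T)=1-\Psi(y_0T^{-1/\alpha})$; smoothness in $T$ on $(0,\infty)$ is then immediate from the previous step. Setting $w:=y_0T^{-1/\alpha}$, one has $\partial_T\p(\tau_{y_0}\le T)=\tfrac1\alpha T^{-1}w\Psi'(w)$, and, more generally, the scaling structure forces $\partial_T^n\p(\tau_{y_0}\le T)=T^{-n}\sum_{k=1}^{n}c_{n,k}\,w^k\Psi^{(k)}(w)$ for suitable constants $c_{n,k}$. Inserting $|w^k\Psi^{(k)}(w)|\le C\min\{w^{-\alpha'},w^{\alpha'\rho}\}$, which is the case $T=1$ of the previous step, and substituting $w=y_0T^{-1/\alpha}$ then yields the asserted polynomial bound on $\partial_T^n\p(\tau_{y_0}\le T)$.

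The hard part throughout is the behaviour at the diagonal $\{x=y\}$, which after integrating out $X_T$ becomes the boundary $\{y=0\}$ of the support of $\ov X_T$, where the density degenerates like $y^{\alpha\rho-1}$. Because the estimate of Theorem~\ref{thm:density_bound} is only integrable in $x$ to first order, the bounds on $\partial_y^nF$ and on $\partial_T^n\p(\tau_{y_0}\le T)$ for $n\ge2$ cannot be obtained from~\eqref{eq:density_at_infinity} as a black box and genuinely require the finer, representation-level estimates that go into its proof; once these are available, the only remaining work is the bookkeeping that recovers the exact exponents of $y$, $y_0$ and $T$ rather than merely the qualitative decay rates.
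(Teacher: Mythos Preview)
Your diagnosis that the bound~\eqref{eq:density_at_infinity} in the $(X_T,\ov X_T)$ variables fails to be integrable in $x$ near the diagonal once $m\ge 2$ is correct, but the remedy you propose---returning to the series representation and re-running the interpolation argument---is heavier than necessary. The paper's proof of Theorem~\ref{thm:density_bound} actually establishes the \emph{factorised} bound~\eqref{eq:density_refl_proc} for the pair $(X_+,X_-)=(\ov X_T,\ov X_T-X_T)$, from which~\eqref{eq:density_at_infinity} is only afterwards deduced by the linear change of variables. In those coordinates the upper bound on $|\partial_{x_+}^{n}\partial_{x_-}^{m}\widetilde F(x_+,x_-)|$ is a product of a function of $x_+$ and a function of $x_-$; setting $m=1$ and integrating the $x_-$-factor $x_-^{-1}\min\{T^{\alpha'/\alpha}x_-^{-\alpha'},\,T^{-\alpha'(1-\rho)/\alpha}x_-^{\alpha'(1-\rho)}\}$ over $(0,\infty)$ gives a finite constant (split at $x_-=T^{1/\alpha}$), and what remains is precisely $Cx_+^{-n}\min\{T^{\alpha'/\alpha}x_+^{-\alpha'},\,T^{-\alpha'\rho/\alpha}x_+^{\alpha'\rho}\}$ for \emph{every} $n\ge 1$. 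No recourse to Proposition~\ref{prop:Theta_bound} or Theorem~\ref{thm:series} is needed beyond what already went into~\eqref{eq:density_refl_proc}. The diagonal singularity that blocks you is an artefact of the $(X_T,\ov X_T)$ parametrisation: the change of coordinates mixes the $x_+$- and $x_-$-derivatives and produces the factor $(y-x)^{1-n-m}$ in~\eqref{eq:density_at_infinity}, which is what destroys integrability.

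Your treatment of $\tau_{y_0}$ via the scaling identity $\p(\tau_{y_0}\le T)=1-\Psi(y_0T^{-1/\alpha})$ and the recursion $\partial_T^{\,n}\p(\tau_{y_0}\le T)=T^{-n}\sum_{k=1}^{n}c_{n,k}\,w^k\Psi^{(k)}(w)$ with $w=y_0T^{-1/\alpha}$ is the right idea and matches the paper's (unspecified) route; once the first part is in hand, substituting $|w^k\Psi^{(k)}(w)|\le C\min\{w^{-\alpha'},w^{\alpha'\rho}\}$ gives the claim.
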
 
	
	It has been pointed out to us~\cite{eSavov} that the bound in 
	Corollary~\ref{cor:sup} for $\alpha'=\alpha$ can be obtained from the 
	literature. By studying the Mellin transform of 
	$\ov{X}_T$~\cite[Thm~2.4]{MR3835481} (via a distributional identity linking 
	$\ov{X}_T$ to an exponential integral arising in the Lamperti representation 
	of self-similar Markov processes~\cite[Rem.~2.14]{MR3835481}), 
	one obtains the asymptotic behaviour in \eqref{eq:as}. Similar bounds can be obtained for the 
	derivatives of the density, implying Corollary~\ref{cor:sup}.
	
	Other consequences of our main Theorem \ref{thm:density_bound} can also be derived such as the following result which reveals a complex interplay between the final value of the stable process and its supremum in the interval $[0,T]$. 
	
	\begin{cor}
		Assume that $ \alpha\in (0,2) $ and let $ y_0\geq T^{1/\alpha} $, $ x_0\leq 0 $. Then for any $ \alpha'\in (0,\alpha) $
		\begin{align*}
			\p(X_T\leq x_0, \tau_{y_0}<T)\leq CT^{2\frac\alpha{\alpha'}}y_0^{-\alpha'}
			\times 	\min\{y_0^{-\alpha'},(-x_0)^{-\alpha'} \}
		\end{align*}
	\end{cor}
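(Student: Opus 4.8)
The plan is to reduce the event to a first-passage statement about the supremum, express the resulting probability as an integral of the joint density over a rectangle lying inside $\O$, and then substitute the bound of Theorem~\ref{thm:density_bound} with $n=m=1$. First I would observe that $\tau_{y_0}<T$ forces the existence of some $s<T$ with $X_s>y_0$, whence $\ov X_T\ge X_s>y_0$; therefore $\{X_T\le x_0,\tau_{y_0}<T\}\subseteq\{X_T\le x_0,\ov X_T>y_0\}$, and it suffices to bound $\p(X_T\le x_0,\ov X_T>y_0)$. Since $x_0\le 0<y_0$, every $(x,y)$ with $x\le x_0$ and $y\ge y_0$ satisfies $y>0=\max\{x,0\}$ and so lies in the open set $\O$, on which Theorem~\ref{thm:density_bound} guarantees that $F$ is smooth and the joint density $p=\partial_x\partial_y F\ge 0$ exists; consequently, by the definition of the joint density together with Tonelli's theorem, $\p(X_T\le x_0,\ov X_T>y_0)=\int_{y_0}^{\infty}\int_{-\infty}^{x_0}\partial_x\partial_y F(x,y)\,dx\,dy$.

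Next I would insert~\eqref{eq:density_at_infinity} with $n=m=1$, for which $(y-x)^{1-n-m}=(y-x)^{-1}$ and $(2y-x)^{m-1}=1$, and bound the minimum of the four functions by $f^{11}_{\alpha'}(x,y)=T^{2\alpha'/\alpha}(y-x)^{-\alpha'}y^{-\alpha'}$. This is legitimate unconditionally, and under the hypotheses $x_0\le 0$ and $y_0\ge T^{1/\alpha}$ the term $f^{11}_{\alpha'}$ is in fact the smallest of the four on the whole region of integration (cf.\ Figure~\ref{fig:regions}), so no sharpness is lost. We thus obtain $|\partial_x\partial_y F(x,y)|\le C\,T^{2\alpha'/\alpha}\,y^{-1-\alpha'}(y-x)^{-1-\alpha'}$ for $x\le x_0$, $y\ge y_0$. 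Carrying out the $x$-integral first, $\int_{-\infty}^{x_0}(y-x)^{-1-\alpha'}\,dx=(\alpha')^{-1}(y-x_0)^{-\alpha'}$ (the exponent $-1-\alpha'<-1$ makes this finite), one is left with $(\alpha')^{-1}C\,T^{2\alpha'/\alpha}\int_{y_0}^{\infty}y^{-1-\alpha'}(y-x_0)^{-\alpha'}\,dy$.

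To finish, I would use that $x_0\le 0$ implies $y-x_0\ge\max\{y,-x_0\}$ throughout the range of integration: the choice $(y-x_0)^{-\alpha'}\le y^{-\alpha'}$ gives $\int_{y_0}^{\infty}y^{-1-2\alpha'}\,dy=(2\alpha')^{-1}y_0^{-2\alpha'}$, while the choice $(y-x_0)^{-\alpha'}\le(-x_0)^{-\alpha'}$ gives $(-x_0)^{-\alpha'}\int_{y_0}^{\infty}y^{-1-\alpha'}\,dy=(\alpha')^{-1}(-x_0)^{-\alpha'}y_0^{-\alpha'}$. Retaining the smaller of the two bounds and absorbing the $\alpha'$-dependent constants into $C$ yields $\p(X_T\le x_0,\tau_{y_0}<T)\le C\,T^{2\alpha'/\alpha}\min\{y_0^{-2\alpha'},\,y_0^{-\alpha'}(-x_0)^{-\alpha'}\}=C\,T^{2\alpha'/\alpha}\,y_0^{-\alpha'}\min\{y_0^{-\alpha'},(-x_0)^{-\alpha'}\}$, which is the assertion; the power of $T$ produced is $2\alpha'/\alpha$, inherited directly from $f^{11}_{\alpha'}$. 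I do not expect a genuine obstacle: once Theorem~\ref{thm:density_bound} is in hand the argument is the elementary two-variable integral above, and the only step needing a word of justification is the measure-theoretic identity in the first paragraph, which is routine because $p$ is nonnegative and supported on $\O$.
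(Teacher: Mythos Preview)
Your argument is correct and follows the same route as the paper's own proof: both reduce the event to $\{X_T\le x_0,\ov X_T>y_0\}$, apply Theorem~\ref{thm:density_bound} with $n=m=1$ and the $f^{11}_{\alpha'}$ branch, and then integrate the resulting bound $y^{-1-\alpha'}(y-x)^{-1-\alpha'}$ over the rectangle $(-\infty,x_0]\times[y_0,\infty)$. The only cosmetic difference is that the paper records the single integral after a substitution $w=y/(-x_0)$, whereas you integrate directly and split cases via $(y-x_0)^{-\alpha'}\le\min\{y^{-\alpha'},(-x_0)^{-\alpha'}\}$; the two computations are equivalent. Note also that your exponent $T^{2\alpha'/\alpha}$ is the one actually produced by $f^{11}_{\alpha'}$, so the $T^{2\alpha/\alpha'}$ in the stated corollary appears to be a typo.
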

	\begin{proof}The inequalities are obtained by direct integration of the bound in Theorem \ref{thm:density_bound}. That is,
		\begin{align*}
			\p(X_T\leq x_0, \overline{X}_T>y_0)\leq 
			CT^{2\frac{\alpha}{\alpha'}}\int_{\frac{L}{-x_0}}^\infty w^{-1-\alpha'}(1+w)^{-\alpha'}dw.
		\end{align*}
		From here, the result follows.
	\end{proof}

	We conclude the section by remarking on the excluded cases: 
	our methods apply to the Brownian motion case $\alpha=2$, 
	but the result is not relevant since the density of $(X_T,\ov{X}_T)$ is 
	known explicitly; 
	in~\eqref{cond:rho} we exclude $\rho\in\{0,1\}$ as in those cases the 
	monotonicity of paths implies
	$\ov{X}_T=X_T$ (resp. $\ov{X}_T=X_0$) a.s. if 
	$\rho=1$ (resp. $\rho=0$). 
	
	\section{Tools: Approximation method and sequential Ibpf}
	\label{sec:3}
	In order to avoid cumbersome multiple case studies, we will assume $\alpha\neq 1$ from now on until the last section in the Appendix where the appropriate changes for the case $\alpha=1$ will be explained. 
	
	\subsection{Approximation method for $(X_T, \ov{X}_T)$}
	\label{sec:3.1}

	Throughout the article, we fix $T>0$ and we will use the following decomposition of the random variable $X_T$ using $X_+:=\ov{X}_T$ and $X_-$  which denote the supremum of 
	$(X_t)_{t\in[0,T]}$ and its reflected process $X_-:=X_+-X_T$. Therefore instead of working with $(X_T,\ov{X}_T)$, we will use $(X_+,X_-)$ whose law is supported in $\mathbb{R}_+^2$ This $\pm$ notation will be useful in order to write dual formulas that are valid for both random variables $X_\pm$.  
	
	In fact, the proof of Theorem~\ref{thm:density_bound} studies the equivalent pair 
	$(X_+,X_-)$, instead of $(X_T,\ov{X}_T)$, and shows the following: let 
	$\widetilde F(x,y):=\p(X_+\le x, X_-\le y)$, then for any 
	$\alpha'\in[0,\alpha)$ and $n,m\ge 1$ there exists some constant $C>0$ 
	such that for any $T,x,y>0$ we have 
	\begin{equation}
		\label{eq:density_refl_proc}
		|\partial_x^n\partial_y^m \widetilde F(x,y)|
		\le Cx^{-n}y^{-m}\min\big\{T^{\frac{\alpha'}{\alpha}}x^{-\alpha'},
		T^{-\frac{\alpha'}{\alpha}\rho}x^{\alpha'\rho}\big\}
		\min\big\{T^{\frac{\alpha'}{\alpha}}y^{-\alpha'},
		T^{-\frac{\alpha'}{\alpha}(1-\rho)}y^{\alpha'(1-\rho)}\big\}.
	\end{equation}

	For this reason, we will use in many formulas multiple $\pm$ and $\mp$ signs. It is assumed that the signs match, i.e., 
	all $\pm$ are $+$ (resp. $-$) and all $\mp$ are $-$ (resp. $+$) 
	simultaneously. For example, $A_\pm=\mp B_\mp$ if and only if 
	$A_+=-B_-$ and $A_-=+B_+$. Additionally, we use the notation 
	$[x]^+=\max\{x,0\}$ and $[x]^-=\max\{-x,0\}$. We stress that if the brackets 
	are not present, then the notation refers to a different object. For example, $X_{\pm,n}$ denote the approximations for $X_{\pm}$ respectively and $\D^\pm_n$ are the associated derivative operators to be defined below. 
	Finally, we denote $x\wedge y=\min\{x,y\}$ and $x\vee y=\max\{x,y\}$. 
	
	We will use an approximation method for the pair $(X_T,\ov{X}_T)$ used in~\cite[\S 4.1]{LevySupSim} (see also~\cite[Eq.~(2.2)]{MR4032169} 
	and~\cite[Thm~1]{MR2978134}) which is based on the concave majorant of $X$, see Figure~\ref{fig:FacesCM}.
	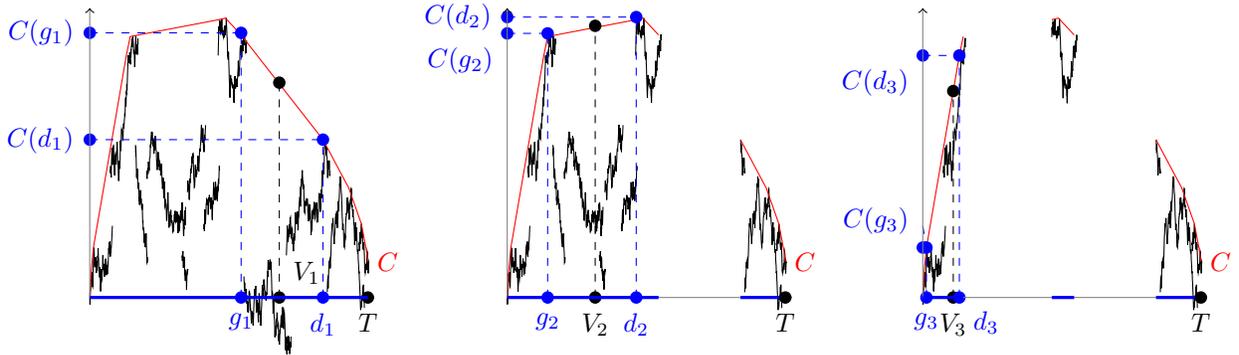
\begin{figure}[ht]
		\begin{tikzpicture}
			\pgfmathsetseed{101101}
			\BrownianMotion{0}{0}{100}{0.003}{0.2*.7}{black}{};
			\BrownianMotion{101*.0027}{2.5*.7}{123}{0.003}{0.2*.7}{black}{};
			\BrownianMotion{225*.0027}{2.9*.7}{54}{0.003}{0.2*.7}{black}{};
			\BrownianMotion{280*.0027}{2.4*.7}{172}{0.003}{0.2*.7}{black}{};
			\BrownianMotion{453*.0027}{0.8*.7}{27}{0.003}{0.2*.7}{black}{};
			\BrownianMotion{481*.0027}{2.0*.7}{80}{0.003}{0.2*.7}{black}{};
			\BrownianMotion{562*.0027}{1.3*.7}{69}{0.003}{0.2*.7}{black}{};
			\BrownianMotion{632*.0027}{4.6*.7}{126}{0.003}{0.2*.7}{black}{};
			\BrownianMotion{759*.0027}{0.4*.7}{208}{0.003}{0.2*.7}{black}{};
			\BrownianMotion{968*.0027}{0.8*.7}{67}{0.003}{0.2*.7}{black}{};
			\BrownianMotion{1036*.0027}{1.8*.7}{132}{0.003}{0.2*.7}{black}{};
			\BrownianMotion{1169*.0027}{0.1*.7}{164}{0.003}{0.2*.7}{black}{};
			\BrownianMotion{1324*.0027}{1.0*.7}{45}{0.003}{0.2*.7}{black}{};
			\draw[red] (0,0) -- (6*.0027,.455*.7) 
			-- (18*.0027,.95*.7)
			-- (180*.0027,4.6*.7)
			-- (199*.0027,4.96*.7)
			-- (635*.0027,5.28*.7)
			-- (668*.0027,5.31*.7)
			-- (745*.0027,4.99*.7)
			-- (1148*.0027,3.0*.7)
			-- (1275*.0027,2.08*.7)
			-- (1337*.0027,1.4*.7)
			-- (1362*.0027,.89*.7)
			-- (1369*.0027,.68*.7)
			node[right] { $C$};
			\node[circle, fill=black, scale=0.5] at (933*.0027,188*3*.7/403+215*5.03*.7/403) {}{};
			\node[circle, fill=black, scale=0.5, label=above right:{ $V_1$}] at (933*.0027,0) {}{};
			\draw[dashed] (933*.0027,0) -- (933*.0027,188*3*.7/403+215*5.03*.7/403);
			\node[circle, fill=blue, scale=0.5] at (745*.0027,5.03*.7) {}{};
			\node[circle, fill=blue, scale=0.5, label=below: {\color{blue} $g_1$}] at (745*.0027,0) {}{};
			\node[circle, fill=blue, scale=0.5, label=left: {\color{blue} $C(g_1)$}] at (0,5.03*.7) {}{};
			\draw[blue,dashed] (745*.0027,0) -- (745*.0027,5.03*.7);
			\draw[blue,dashed] (0,5.03*.7) -- (745*.0027,5.03*.7);
			\node[circle, fill=blue, scale=0.5] at (1148*.0027,3*.7) {}{};
			\node[circle, fill=blue, scale=0.5, label=below: {\color{blue} $d_1$}] at (1148*.0027,0) {}{};
			\node[circle, fill=blue, scale=0.5, label=left: {\color{blue} $C(d_1)$}] at (0,3*.7) {}{};
			\draw[blue,dashed] (1148*.0027,0) -- (1148*.0027,3*.7);
			\draw[blue,dashed] (0,3*.7) -- (1148*.0027,3*.7);
			\draw [thin, draw=gray, ->] (0,0) -- (1400*.0027,0);
			\draw [thin, draw=gray, ->] (0,0) -- (0,5.5*.7);
			\node[circle, fill=black, scale=0.5, label=below: $T$] at (1369*.0027,0) {}{};
			\draw[very thick, draw=blue] (0,0) -- (1369*.0027,0);
		\end{tikzpicture}
		\begin{tikzpicture}
			\pgfmathsetseed{101101}
			\BrownianMotion{0}{0}{100}{0.003}{0.2*.7}{black}{};
			\BrownianMotion{101*.0027}{2.5*.7}{123}{0.003}{0.2*.7}{black}{};
			\BrownianMotion{225*.0027}{2.9*.7}{54}{0.003}{0.2*.7}{black}{};
			\BrownianMotion{280*.0027}{2.4*.7}{172}{0.003}{0.2*.7}{black}{};
			\BrownianMotion{453*.0027}{0.8*.7}{27}{0.003}{0.2*.7}{black}{};
			\BrownianMotion{481*.0027}{2.0*.7}{80}{0.003}{0.2*.7}{black}{};
			\BrownianMotion{562*.0027}{1.3*.7}{69}{0.003}{0.2*.7}{black}{};
			\BrownianMotion{632*.0027}{4.6*.7}{113}{0.003}{0.2*.7}{black}{};
			\phantom{
				\BrownianMotion{632*.0027}{4.6*.7}{13}{0.003}{0.2*.7}{black}{};
				\BrownianMotion{759*.0027}{0.4*.7}{208}{0.003}{0.2*.7}{black}{};
				\BrownianMotion{968*.0027}{0.8*.7}{67}{0.003}{0.2*.7}{black}{};
				\BrownianMotion{1036*.0027}{1.8*.7}{113}{0.003}{0.2*.7}{black}{};
			}
			\BrownianMotion{1149*.0027}{3.0*.7}{19}{0.003}{0.2*.7}{black}{};
			\BrownianMotion{1169*.0027}{0.1*.7}{164}{0.003}{0.2*.7}{black}{};
			\BrownianMotion{1324*.0027}{1.0*.7}{45}{0.003}{0.2*.7}{black}{};
			\draw[red] (0,0) -- (6*.0027,.455*.7) 
			-- (18*.0027,.95*.7)
			-- (180*.0027,4.6*.7)
			-- (199*.0027,4.96*.7)
			-- (635*.0027,5.28*.7)
			-- (668*.0027,5.31*.7)
			--(745*.0027,4.99*.7){};
			\draw[red] (1148*.0027,3.0*.7)
			-- (1275*.0027,2.08*.7)
			-- (1337*.0027,1.4*.7)
			-- (1362*.0027,.89*.7)
			-- (1369*.0027,.68*.7)
			node[right] { $C$};
			\node[circle, fill=black, scale=0.5] at (433*.0027,202*5.33*.7/436+234*5.02*.7/436) {}{};
			\node[circle, fill=black, scale=0.5, label=below:{ $V_2$}] at (433*.0027,0) {}{};
			\draw[dashed] (433*.0027,0) -- (433*.0027,202*5.33*.7/436+234*5.02*.7/436);
			\node[circle, fill=blue, scale=0.5] at (199*.0027,5.02*.7) {}{};
			\node[circle, fill=blue, scale=0.5, label=below: {\color{blue} $g_2$}] at (199*.0027,0) {}{};
			\node[circle, fill=blue, scale=0.5, label=below left: {\color{blue} $C(g_2)$}] at (0,5.02*.7) {}{};
			\draw[blue,dashed] (199*.0027,0) -- (199*.0027,5.02*.7);
			\draw[blue,dashed] (0,5.02*.7) -- (199*.0027,5.02*.7);
			\node[circle, fill=blue, scale=0.5] at (635*.0027,5.33*.7) {}{};
			\node[circle, fill=blue, scale=0.5, label=below: {\color{blue} $d_2$}] at (635*.0027,0) {}{};
			\node[circle, fill=blue, scale=0.5, label=left: {\color{blue} $C(d_2)$}] at (0,5.33*.7) {}{};
			\draw[blue,dashed] (635*.0027,0) -- (635*.0027,5.33*.7);
			\draw[blue,dashed] (0,5.33*.7) -- (635*.0027,5.33*.7);
			\draw [thin, draw=gray, ->] (0,0) -- (1400*.0027,0);
			\draw [thin, draw=gray, ->] (0,0) -- (0,5.5*.7);
			\draw [very thick, draw=blue] (0,0) -- (745*.0027,0);
			\draw [very thick, draw=blue] (1148*.0027,0) -- (1369*.0027,0);
			\node[circle, fill=black, scale=0.5, label=below: $T$] at (1369*.0027,0) {}{};
		\end{tikzpicture}
		\begin{tikzpicture}
			\pgfmathsetseed{101101}
			\BrownianMotion{0}{0}{100}{0.003}{0.2*.7}{black}{};
			\BrownianMotion{101*.0027}{2.5*.7}{98}{0.003}{0.2*.7}{black}{};
			\phantom{
				\BrownianMotion{101*.0027}{2.5*.7}{25}{0.003}{0.2*.7}{black}{};
				\BrownianMotion{225*.0027}{2.9*.7}{54}{0.003}{0.2*.7}{black}{};
				\BrownianMotion{280*.0027}{2.4*.7}{172}{0.003}{0.2*.7}{black}{};
				\BrownianMotion{453*.0027}{0.8*.7}{27}{0.003}{0.2*.7}{black}{};
				\BrownianMotion{481*.0027}{2.0*.7}{80}{0.003}{0.2*.7}{black}{};
				\BrownianMotion{562*.0027}{1.3*.7}{69}{0.003}{0.2*.7}{black}{};
				\BrownianMotion{632*.0027}{4.6*.7}{2}{0.003}{0.2*.7}{black}{};
			}
			\BrownianMotion{635*.0027}{4.95*.7}{111}{0.003}{0.2*.7}{black}{};
			\phantom{
				\BrownianMotion{632*.0027}{4.6*.7}{13}{0.003}{0.2*.7}{black}{};
				\BrownianMotion{759*.0027}{0.4*.7}{208}{0.003}{0.2*.7}{black}{};
				\BrownianMotion{968*.0027}{0.8*.7}{67}{0.003}{0.2*.7}{black}{};
				\BrownianMotion{1036*.0027}{1.8*.7}{113}{0.003}{0.2*.7}{black}{};
			}
			\BrownianMotion{1149*.0027}{3.0*.7}{19}{0.003}{0.2*.7}{black}{};
			\BrownianMotion{1169*.0027}{0.1*.7}{164}{0.003}{0.2*.7}{black}{};
			\BrownianMotion{1324*.0027}{1.0*.7}{45}{0.003}{0.2*.7}{black}{};
			\draw[red] (0,0) -- (6*.0027,.455*.7) 
			-- (18*.0027,.95*.7)
			-- (180*.0027,4.6*.7)
			-- (199*.0027,4.96*.7){};
			\draw[red] (635*.0027,5.28*.7)
			-- (668*.0027,5.31*.7)
			--(745*.0027,4.99*.7){};
			\draw[red] (1148*.0027,3.0*.7)
			-- (1275*.0027,2.08*.7)
			-- (1337*.0027,1.4*.7)
			-- (1362*.0027,.89*.7)
			-- (1369*.0027,.68*.7)
			node[right] { $C$};
			\node[circle, fill=black, scale=0.5] at (150*.0027,30*.95*.7/162+132*4.6*.7/162) {}{};
			\node[circle, fill=black, scale=0.5, label=below:{ $V_3$}] at (150*.0027,0) {}{};
			\draw[dashed] (150*.0027,0) -- (150*.0027,30*.95*.7/162+132*4.6*.7/162);
			\node[circle, fill=blue, scale=0.5] at (18*.0027,.95*.7) {}{};
			\node[circle, fill=blue, scale=0.5, label=below:{\color{blue} $g_3$}] at (18*.0027,0) {}{};
			\node[circle, fill=blue, scale=0.5, label=above left:{\color{blue} $C(g_3)$}] at (0,.95*.7) {}{};
			\draw[blue,dashed] (18*.0027,0) -- (18*.0027,.95*.7);
			\draw[blue,dashed] (0,.95) -- (18*.0027,.95*.7);
			\node[circle, fill=blue, scale=0.5] at (180*.0027,4.6*.7) {}{};
			\node[circle, fill=blue, scale=0.5, label=below right:{\color{blue}  $d_3$}] at (180*.0027,0) {}{};
			\node[circle, fill=blue, scale=0.5, label=below left:{\color{blue} $C(d_3)$}] at (0,4.6*.7) {}{};
			\draw[blue,dashed] (180*.0027,0) -- (180*.0027,4.6*.7);
			\draw[blue,dashed] (0,4.6*.7) -- (180*.0027,4.6*.7);
			\draw [thin, draw=gray, ->] (0,0) -- (1400*.0027,0);
			\draw [thin, draw=gray, ->] (0,0) -- (0,5.5*.7);
			\draw [very thick, draw=blue] (0,0) -- (199*.0027,0);
			\draw [very thick, draw=blue] (635*.0027,0) -- (745*.0027,0);
			\draw [very thick, draw=blue] (1148*.0027,0) -- (1369*.0027,0);
			\node[circle, fill=black, scale=0.5, label=below: $T$] at (1369*.0027,0) {}{};
		\end{tikzpicture}
		\caption{\label{fig:FacesCM} 
			Randomly selecting the first three faces of the concave majorant $C$ of $X$ (the smallest concave function dominating the path of $X$) in a size-biased way. The total length of the thick blue segment(s) on the abscissa equal the stick remainders $L_0=T$, $L_1=T-\ell_1$ and $L_2=T-\ell_1-\ell_2$, respectively, where $\ell_1=d_1-g_1$ and $\ell_2=d_2-g_2$.  The independent random variables $V_1,V_2,V_3$ are uniform on the sets $[0,T]$, $[0,T]\setminus(g_1,d_1)$, $[0,T]\setminus\bigcup_{i=1}^2(g_i,d_i)$, respectively. The interval $(g_i,d_i)$, is determined by the edge of the concave majorant which includes $V_i$. By \cite[\S 4.1]{LevySupSim}, this procedure yields a stick-breaking process $\ell$ and, conditionally given $\ell$, the increments
			$C(d_i)-C(g_i)$ are independent with the same law as $X_t$ at $t=\ell_i$, i.e., $C(d_i)-C(g_i)\,\eqd\, \ell_i^{1/\alpha}S_i$.}
	\end{figure}
	
	The procedure starts by constructing a random sequence of disjoint sub-intervals of the time interval $[0,T]$ which will cover it geometrically fast. This is usually called a stick-breaking process: $\ell=(\ell_i)_{i\geq1}$ on 
	the interval $[0,T]$. That is, based on the i.i.d. standard uniform random variables 
	$U_i\sim\U(0,1)$, define $L_0:=T$ and for each $i\in\N$,
	$L_i:=L_{i-1}U_k$ and $\ell_i=L_{i-1}-L_i=L_{i-1}(1-U_i)=T(1-U_i)\prod_{j=1}^{i-1}U_j$. It is not difficult to see that $\sum_{i=1}^\infty \ell_i=T$ and that for any $p>0$,  $\mathbb{E}[\ell_{{i}}^{p}]=T^p\left(1+p\right)^{{-i}}$. That is, the convergence of the total length of the sequence of disjoint intervals $\bigcup_{j=1}^i[L_{j-1},L_j]$ to $T$ is geometrically fast.

	Now, we define the Chambers-Mallows-Stuck approximation for stable laws. We need to define a sequence of them in order to approximate $X_{\pm}$. For an independent 
	i.i.d. sequence of stable random variables $(S_i)_{i\geq1}$ with parameters 
	$(\alpha,\rho)$ (i.e. $S_i \eqd X_1$). When $\alpha\ne 1$, these stable 
	random variables can be represented as (see~\cite{Weron}) 
	\[
	S_i = E_i^{1-1/\alpha}G_i
	\qquad\text{and}\qquad 
	G_i=g(V_i),\qquad i\in\N,
	\]
	for i.i.d. exponential random variables $(E_i)_{i\geq1}$ with unit mean  
	independent of the i.i.d. $\U(-\frac{\pi}{2},\frac{\pi}{2})$ random variables 
	$(V_i)_{i\geq1}$ and function  
	\begin{equation}
		\label{eq:g-function}
		g(x):=\frac{\sin\big(\alpha\big(x+\omega\big)\big)}
		{\cos^{1/\alpha}(x)
			\cos^{1-1/\alpha}\big((1-\alpha)x-\alpha\omega\big)},
		\qquad x\in\Big(-\frac{\pi}{2},\frac{\pi}{2}\Big),
	\end{equation}
	where $\omega:=\pi(\rho-\tfrac{1}{2})$. 
	Note that indeed $\p(S_i>0)=\rho$. We assume that all the above random 
	variables are defined on a probability space $(\Omega,\F,\p)$. These 
	random elements and the coupling in~\cite[\S 4.1]{LevySupSim} provide an 
	almost sure representation for $(X_T,\ov{X}_T)$:
	\begin{equation}
		\label{eq:X_pm}
		\ov X_T=X_+
		\qquad\text{and}\qquad
		X_T=X_+ -X_-,
		\qquad\text{where}\qquad 
		X_\pm:=\sum_{i=1}^\infty\ell_i^{1/\alpha}[S_i]^\pm.
	\end{equation}
	The series in the definitions of $X_+$ and $X_-$ have non-negative terms and converge almost surely by the equalities in~\eqref{eq:X_pm}. Note again, that the convergence in the above infinite sum is ``geometrically fast'' due to the behavior of the stick breaking process.

	As stated in the Introduction, we will base our finite dimensional integration by parts formulas using the exponential random variables $E_i$ which characterize heuristically the ``length'' of the stable random variable $S_i$ while the ``oscillating'' part $G_i$ will not be used in order to determine the regularity of the law and therefore all calculations will be conditioned on this random sequence. This key observation makes possible our analysis. 
	
	In order to build approximations of the above random 
	variables on finite dimensional spaces with smooth laws, we will truncate the infinite sums up to the $n$-th term. With this in mind and in order to preserve the existence of densities, we replace the remainder with $a_n\eta_{\pm}$ as follows: let $(a_n)_{n\in\N}$ be a positive and strictly decreasing 
	sequence defined as $ a_n:=T^{1/\alpha} \kappa^n$  with $ \kappa\in (0,1)$. 
	Therefore $a_n\downarrow0$ as $n\to\infty$. 
	The random variables $\eta_{\pm}$ are exponentially distributed with unit mean 
	independent of each other and of every other random variable. 
	With these elements we define the $n$-th approximation to $\chi=(X_+,X_-)$ 
	as $\chi_n=(X_{+,n},X_{-,n})$, $n\in\N$ given by 
	\begin{equation}
		\label{eq:xpm}
		X_{\pm,n}
		:=
		\sum_{i=1}^n
		\ell_i^{1/\alpha}[S_i]^\pm+a_n \eta_\pm^{1-1/\alpha}
		=\sum_{i=1}^n
		\ell_i^{1/\alpha}E_i^{1-1/\alpha}[G_i]^\pm+a_n \eta_\pm^{1-1/\alpha}.
	\end{equation}
	In the case $n=0$, we define $X_{\pm,0}:=0$.

	We introduce the following assumption which will 
	be valid throughout the paper.
	\begin{assumption*}[A-$\kappa$]
		\label{asm:seq_decay}
		The constant $\kappa\in (0,1)$ in $a_n=T^{1/\alpha} \kappa^n$ satisfies 
		$\kappa^\alpha\ge\rho\vee (1-\rho)$. 
	\end{assumption*}
	This assumption is crucial in order to obtain good positive and negative moment estimates for $X_{\pm,n}$ within the bounds allowed by stable laws (see Lemma~\ref{lem:inv-mom}).

	For any $m\in\N$, $n\in\N\cup\{\infty\}$ and $A\subset\R^m$, 
	let $C_b^{n}(A)$ be the set of bounded and $n$-times continuously 
	differentiable functions $f:\R^m\to\R$ on the open set $A$ and whose 
	derivatives of order at most $n$ are all bounded. Furthermore for 
	$f\in C_b^{1}(\R^2)$ we denote the partial derivatives with respect to the 
	first and second component by $\partial_+ f$ and $\partial_- f$, respectively. 
	\subsection{Sequential integration by parts formulae via a multilevel method}
	\label{subsec:Ibpf}
	
	In order to state the finite dimensional Ibpf based on exponential random variables, we will use a derivative operator notation with 
	respect to this set of random variables. Thus, for any random variable 
	$F=f(\vartheta, \mathcal{K})$, where $f$ is differentiable in the first 
	component and the random variable $\vartheta$ is independent of the 
	random element $\mathcal{K}$, the derivative $\partial_\vartheta[\cdot]$
	is well-defined and given by the formula
	$\partial_\vartheta[F]=\partial_\vartheta f(\vartheta, \mathcal{K})$.
	As stated above, the random variables 
	$\{E_i, U_i, V_i, \eta_{\pm}; i\in\N\}$ are independent (i.e. the joint law 
	is a product measure), making the derivatives in the following lemma well-defined. We start stating some basic properties of the differential operator which will be used in our arguments.
	
	
	\begin{lem} 
		\label{lem:3}
		For any $m\in\N$, define the differential operators
		\begin{equation}
			\label{eq:D}
			\D^\pm_m
			:=
			\eta_\pm\partial_{\eta_\pm}
			+\sum_{i=1}^m E_i \1_{\{[G_i]^\pm>0\}}\partial_{E_i}.
		\end{equation}
		Then for any function $f:\R_+\to\R_+$ and $p\in\R\setminus\{0\}$ we have 
		\begin{equation}
			\label{eq:reg}
			\begin{split}
				E_i\partial_{E_i}[X_{\pm,n}]
				&=(1- 1/\alpha)\ell_i^{1/\alpha}
				E_i^{1-1/\alpha}[G_i]^\pm\1_{\{i\le n\}},
				\hspace{12mm} k\in\N,\\
				\D^\pm_m\big[\big(X^p_{\pm,n},f(X_{\mp,n})\big)\big]
				&=( 1 - 1/\alpha)\big(pX_{\pm,n}^{p},0\big),
				\hspace{17.5mm} m\ge n\ge 1.\\
			\end{split}
		\end{equation}
	\end{lem}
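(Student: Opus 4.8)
The plan is to verify the two identities in~\eqref{eq:reg} by direct computation, using only the explicit formula~\eqref{eq:xpm} for $X_{\pm,n}$ and the definition $\partial_\vartheta[f(\vartheta,\mathcal K)]=\partial_\vartheta f(\vartheta,\mathcal K)$ of the derivative operator with respect to an independent coordinate. First I would establish the single-variable identity $E_i\partial_{E_i}[X_{\pm,n}]=(1-1/\alpha)\ell_i^{1/\alpha}E_i^{1-1/\alpha}[G_i]^\pm\1_{\{i\le n\}}$: in the representation $X_{\pm,n}=\sum_{j=1}^n\ell_j^{1/\alpha}E_j^{1-1/\alpha}[G_j]^\pm+a_n\eta_\pm^{1-1/\alpha}$ the variable $E_i$ appears only in the $j=i$ summand (and only when $i\le n$), so $\partial_{E_i}[X_{\pm,n}]=\1_{\{i\le n\}}\ell_i^{1/\alpha}(1-1/\alpha)E_i^{-1/\alpha}[G_i]^\pm$; multiplying by $E_i$ gives the claim. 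Here one uses that $E_i$ is independent of $\{\ell_j\}$ (a function of the $U_j$'s), of $G_i=g(V_i)$, and of $\eta_\pm$, so that all of these are treated as constants when differentiating in the $E_i$ direction.

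Next I would turn to the action of $\D^\pm_m$ on the pair $(X_{\pm,n}^p,f(X_{\mp,n}))$ for $m\ge n\ge1$. Since $\D^\pm_m=\eta_\pm\partial_{\eta_\pm}+\sum_{i=1}^m E_i\1_{\{[G_i]^\pm>0\}}\partial_{E_i}$ is a first-order operator, it suffices to compute $\D^\pm_m[X_{\pm,n}]$ and $\D^\pm_m[X_{\mp,n}]$ and then apply the chain rule, giving $\D^\pm_m[X_{\pm,n}^p]=pX_{\pm,n}^{p-1}\D^\pm_m[X_{\pm,n}]$ and $\D^\pm_m[f(X_{\mp,n})]=f'(X_{\mp,n})\D^\pm_m[X_{\mp,n}]$. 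For the first term, apply the already-proven single-variable identity to each $i\le m$: the $\eta_\pm$-piece contributes $\eta_\pm\partial_{\eta_\pm}[a_n\eta_\pm^{1-1/\alpha}]=(1-1/\alpha)a_n\eta_\pm^{1-1/\alpha}$, while the $E_i$-piece contributes $E_i\1_{\{[G_i]^\pm>0\}}\partial_{E_i}[X_{\pm,n}]=(1-1/\alpha)\ell_i^{1/\alpha}E_i^{1-1/\alpha}[G_i]^\pm\1_{\{i\le n\}}$, where the extra indicator $\1_{\{[G_i]^\pm>0\}}$ is harmless since $[G_i]^\pm=[G_i]^\pm\1_{\{[G_i]^\pm>0\}}$. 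Summing over $i=1,\dots,m$ and using $m\ge n$ so that the range $i\le n$ is fully included, all $n$ terms plus the $\eta_\pm$ term reassemble exactly $(1-1/\alpha)X_{\pm,n}$; hence $\D^\pm_m[X_{\pm,n}]=(1-1/\alpha)X_{\pm,n}$ and $\D^\pm_m[X_{\pm,n}^p]=(1-1/\alpha)pX_{\pm,n}^p$.

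For the second coordinate, the point is that $X_{\mp,n}=\sum_{j=1}^n\ell_j^{1/\alpha}E_j^{1-1/\alpha}[G_j]^\mp+a_n\eta_\mp^{1-1/\alpha}$ involves $\eta_\mp$, not $\eta_\pm$, so $\eta_\pm\partial_{\eta_\pm}[X_{\mp,n}]=0$; and each $E_i$-derivative is weighted by $\1_{\{[G_i]^\pm>0\}}$, whereas the $i$-th summand of $X_{\mp,n}$ carries the factor $[G_i]^\mp=[G_i]^\mp\1_{\{[G_i]^\mp>0\}}$. Since $G_i\ne0$ a.s. (indeed $g$ has no zero in $(-\tfrac\pi2,\tfrac\pi2)$ away from a null set of $V_i$), the events $\{[G_i]^+>0\}$ and $\{[G_i]^->0\}$ are a.s. disjoint, so $\1_{\{[G_i]^\pm>0\}}[G_i]^\mp=0$ a.s.; therefore $E_i\1_{\{[G_i]^\pm>0\}}\partial_{E_i}[X_{\mp,n}]=0$ for every $i$. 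Consequently $\D^\pm_m[X_{\mp,n}]=0$ and $\D^\pm_m[f(X_{\mp,n})]=f'(X_{\mp,n})\cdot0=0$, which together with the previous paragraph yields $\D^\pm_m[(X_{\pm,n}^p,f(X_{\mp,n}))]=((1-1/\alpha)pX_{\pm,n}^p,0)$, as stated. The only genuinely delicate point is the disjointness of $\{[G_i]^+>0\}$ and $\{[G_i]^->0\}$ up to null sets, i.e.\ that $g(V_i)\ne 0$ almost surely; this is immediate from the explicit form of $g$ in~\eqref{eq:g-function}, whose numerator $\sin(\alpha(x+\omega))$ vanishes only on a finite (hence Lebesgue-null) subset of $(-\tfrac\pi2,\tfrac\pi2)$, so I would simply note it in passing. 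Everything else is the chain rule together with bookkeeping of which independent coordinate each summand depends on.
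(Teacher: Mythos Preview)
Your proof is correct and follows the same direct-computation approach as the paper, which simply says the identities ``follow easily'' from the first one and the analogous formula for $\eta_\pm\partial_{\eta_\pm}[X_{\pm,n}]$. Two small simplifications: the events $\{[G_i]^+>0\}=\{G_i>0\}$ and $\{[G_i]^->0\}=\{G_i<0\}$ are disjoint \emph{everywhere}, not merely a.s., so the detour through $g(V_i)\ne 0$ is unnecessary; and since the lemma allows any $f:\R_+\to\R_+$, it is cleaner to argue that on $\{[G_i]^\pm>0\}$ the summand $\ell_i^{1/\alpha}E_i^{1-1/\alpha}[G_i]^\mp$ vanishes, so $X_{\mp,n}$ (and hence $f(X_{\mp,n})$) is constant in $E_i$ there, rather than invoking the chain rule with $f'$.
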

	
	\begin{proof}
		The first two identities follow easily. For the third identity, note that 
		$X_{\pm,n}>0$ a.s. and thus, its reciprocal and any of its powers are always well 
		defined real numbers. The other identities follow from the first one and the 
		corresponding formula for $\eta_\pm\partial_{\eta_\pm}[X_{\pm,n}]$. 
	\end{proof}
	\begin{rem}
	    \begin{enumerate}[leftmargin=1cm]
	        \item 	The identity 
	$\D^\pm_m X^p_{\pm,n} = ( 1- 1/\alpha)pX_{\pm,n}^{p}$, 
	$m\geq n\geq1$, in~\eqref{eq:reg} reveals a crucial regenerative property 
	of $X_{\pm,n}$ with respect to the operator $\D^\pm_m$ (like the fact that in classical calculus the derivative of the exponential function is itself). In fact, this is the 
	main motivation behind the definition of $\D^\pm_m$. 
	This regenerative structure relies heavily on the particular dependence of 
	$X_{\pm,n}$ with respect to $S_i$ and $E_i$, $ i\in\{1,\ldots,n\}$. 
	\item The indicators $1_{\{[G_i ]^{\pm} >0\}}$ in the definition of $\D^\pm_m$ ensure that 
	when applied to $f(\chi_n)$, only one of the partial derivatives of $f$ appear due to \eqref{eq:reg}
	(see~\eqref{eq:1.3} below).
	    \end{enumerate}
	\end{rem}

	Now, we introduce the space of 
	smooth random variables. Given any metric space $S$, define the space of 
	real-valued bounded and continuous on $(0,\infty)^m\times S$ that are 
	$C_b^\infty$ in its first $m$ components 
	\[ 
	\s_\infty((0,\infty)^m,S)
	:= \Big\{\phi:(0,\infty)^m\times S\to\R
	;\enskip \phi\text{ is continuous},
	\enskip \phi(\cdot,s)\in C_b^\infty((0,\infty)^m;\R),\ \forall s\in S
	\Big\}.
	\] 
	Then we define 
	\[ 
	\s_m(\Omega)
	:=
	\big\{\Phi\in L^0(\Omega): 
	\enskip\exists \phi(\cdot,\vartheta)\in 
	\s_\infty((0,\infty)^{3m+2}, S), \enskip
	\Phi=\phi(\mathcal{E}_m,\mathcal{U}_m,\mathcal{V}_m,
	\eta_+,\eta_-,\vartheta)\big\},
	\]
	where  
	$\mathcal{E}_m:=(E_1,\ldots,E_m)$, $\mathcal{U}_m:=(U_1,\ldots,U_m)$, 
	$\mathcal{V}_m:=(V_1,\ldots,V_m)$ and
	$\vartheta$ is any random element in some metric space $S$ independent 
	of $(\mathcal{E}_m,\mathcal{U}_m,\mathcal{V}_m,\eta_+,\eta_-)$. {For instance, if the random variable $ \Phi $ is a function of $ ( \mathcal{E}_\infty, \mathcal{U}_\infty,\mathcal{V}_\infty)$, we say that $ \Phi\in \s_m(\Omega) $ if the property defining this set is satisfied with $ \vartheta = ((E_{m+1},E_{m+2},\ldots),(U_{m+1},U_{m+2},\ldots),(V_{m+1},V_{m+2},\ldots)) $ representing all the random variables with indices larger than $ m$.}
	We describe now the following finite dimensional Ibpf for a fixed approximation parameter 
	$n$. Recall that $\chi_n=(X_{+,n},X_{-,n})$ for $n\in\N$. 
	\begin{prop}
		\label{prop:IBP_n}	
		Fix $n,m\in\N$ with $m\ge n$. Then for any  $\Phi\in\s_m(\Omega)$ and 
		$f\in C_b^1((\varepsilon,\infty)^2)$, 
		\begin{equation}\label{eq:IBP_n}\begin{split}
				& \E[\partial_{\pm} f(\chi_n)\Phi]
				=\E[f(\chi_n)H^\pm_{n,m}(\Phi)],
				\quad\text{where}\\
				& H^\pm_{n,m}(\Phi)
				:=\frac{1}{X_{\pm,n}}
				\frac{\alpha}{\alpha-1}
				\Big(\Big(\eta_\pm-\frac{1}{\alpha}
				+\sum_{i=1}^m(E_{i}-1)\1_{\{[G_i]^\pm>0\}}\Big)\Phi
				-\D^\pm_m[\Phi]\Big)\in \s_m(\Omega).
		\end{split}\end{equation}
	\end{prop}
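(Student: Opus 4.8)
The plan is to prove the identity by a two-step reduction: first an elementary one-dimensional integration-by-parts formula in each exponential variable $E_i$ and in the tail variables $\eta_\pm$, and then a recombination of these one-dimensional identities using the regenerative property established in Lemma~\ref{lem:3}. The key observation is that, conditionally on the ``oscillating'' data $\mathcal V_\infty$ (which fixes all the $G_i$) and on the stick-breaking lengths $\ell$, the variable $X_{\pm,n}$ depends on $(E_i)_{i\le n}$ and $\eta_\pm$ only through a sum of the form $\sum_i c_i E_i^{1-1/\alpha}\1_{\{[G_i]^\pm>0\}}+a_n\eta_\pm^{1-1/\alpha}$ with nonnegative coefficients $c_i=\ell_i^{1/\alpha}[G_i]^\pm$, while $X_{\mp,n}$ is measurable with respect to the complementary set of exponentials. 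This is exactly the structure that makes $\D^\pm_m$ act like a scaling derivative on powers of $X_{\pm,n}$.

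First I would record the scalar integration-by-parts rule for a unit-mean exponential: if $Z\sim\mathrm{Exp}(1)$ and $\psi\in C^1_b$, then $\E[\psi'(Z)]=\E[(1-\tfrac1{?})\cdots]$ — more precisely, integrating $\int_0^\infty \psi'(z)e^{-z}\,dz$ by parts gives $\E[Z\psi'(Z)\cdot z^{-1}]$-type weights; the clean statement I will use is $\E[z\partial_z\phi(z)\cdot e^{-z}]=\E[\phi(z)(z-1)e^{-z}]$, i.e. $\E[(z\partial_z)\phi] = \E[(z-1)\phi]$ for the operator $z\partial_z$. Applying this to each variable $E_i$ with $[G_i]^\pm>0$ and to $\eta_\pm$, and summing, yields
\begin{equation*}
  \E\big[\D^\pm_m[\Psi]\big]
  =\E\Big[\Big(\eta_\pm-1+\sum_{i=1}^m (E_i-1)\1_{\{[G_i]^\pm>0\}}\Big)\Psi\Big]
\end{equation*}
for any $\Psi\in\s_m(\Omega)$ for which both sides are well-defined (integrability follows because $\Psi$ is bounded together with its first derivatives, the weight has all moments, and $\eta_\pm,E_i$ are independent of the rest). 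The extra $-1/\alpha$ appearing in the statement comes from the mismatch between $\eta_\pm-1$ (from the plain IBP) and the normalisation forced by the next step, so I will carry it along carefully rather than absorb it prematurely.

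Next I would apply this identity to $\Psi=f(\chi_n)\Phi/X_{\pm,n}$, but the efficient route is instead to compute $\D^\pm_m$ directly on $f(\chi_n)\Phi$ using the product/chain rule and~\eqref{eq:reg}. By Lemma~\ref{lem:3}, for $m\ge n$ we have $\D^\pm_m[f(\chi_n)] = (1-1/\alpha)X_{\pm,n}\,\partial_\pm f(\chi_n)$, since $\D^\pm_m$ annihilates $X_{\mp,n}$ and acts as the multiplication-by-$(1-1/\alpha)X_{\pm,n}$ scaling derivative on the $X_{\pm,n}$-slot; the indicators in the definition of $\D^\pm_m$ are precisely what kills the $\partial_\mp f$ contribution. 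Hence $\D^\pm_m[f(\chi_n)\Phi] = (1-1/\alpha)X_{\pm,n}\partial_\pm f(\chi_n)\Phi + f(\chi_n)\D^\pm_m[\Phi]$. Taking expectations, using the boundary-term formula above with $\Psi=f(\chi_n)\Phi$, and solving for $\E[\partial_\pm f(\chi_n)\Phi]$ gives
\begin{equation*}
  \E[\partial_\pm f(\chi_n)\Phi]
  =\frac{\alpha}{\alpha-1}\,\E\Big[\frac{f(\chi_n)}{X_{\pm,n}}\Big(\Big(\eta_\pm-1+\sum_{i=1}^m(E_i-1)\1_{\{[G_i]^\pm>0\}}\Big)\Phi-\D^\pm_m[\Phi]\Big)\Big].
\end{equation*}
The discrepancy between the $-1$ here and the $-1/\alpha$ in the claimed $H^\pm_{n,m}$ is reconciled by noting that $\D^\pm_m$ also differentiates the implicit factor $X_{\pm,n}^{-1}$ if one instead applies the scalar IBP to $\Psi = f(\chi_n)\Phi X_{\pm,n}^{-1}$ and uses $\D^\pm_m[X_{\pm,n}^{-1}]=-(1-1/\alpha)X_{\pm,n}^{-1}$ from~\eqref{eq:reg}; carrying that term through shifts $\eta_\pm-1$ to $\eta_\pm-1/\alpha$, which is the version stated. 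I will do the bookkeeping in the latter form to land exactly on the displayed $H^\pm_{n,m}$.

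Finally, I would check the two side-conditions: (i) \emph{integrability/validity of the scalar IBP}, which needs $f\in C_b^1((\varepsilon,\infty)^2)$ so that $f(\chi_n)$ and $\partial_\pm f(\chi_n)$ are bounded (note $\chi_n$ takes values in $(0,\infty)^2$, but one restricts attention to $f$ supported away from the axes, or approximates), $\Phi$ and $\D^\pm_m[\Phi]$ bounded since $\Phi\in\s_m(\Omega)$, and $X_{\pm,n}^{-1}$ having enough negative moments — here one invokes Lemma~\ref{lem:inv-mom} (guaranteed by Assumption (A-$\kappa$)) together with the boundedness of the polynomial weight's moments; and (ii) \emph{$H^\pm_{n,m}(\Phi)\in\s_m(\Omega)$}, which is immediate because $\eta_\pm,(E_i)_{i\le m}$, the indicators $\1_{\{[G_i]^\pm>0\}}$ (functions of $V_i$), $X_{\pm,n}^{-1}$, $\Phi$ and $\D^\pm_m[\Phi]$ are all in $\s_m(\Omega)$ and this space is an algebra stable under the relevant operations. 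I expect the only genuinely delicate point to be item (i): justifying the interchange of differentiation and expectation and the vanishing of the boundary terms at $0$ and $\infty$, which requires the negative-moment control of $X_{\pm,n}$ near the origin — precisely the role of Assumption (A-$\kappa$) — together with the exponential decay of the densities at infinity; everything else is a routine application of the chain rule and Lemma~\ref{lem:3}.
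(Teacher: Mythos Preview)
Your approach is essentially the paper's: write $\partial_\pm f(\chi_n)=\tfrac{\alpha}{\alpha-1}X_{\pm,n}^{-1}\D^\pm_m[f(\chi_n)]$ via Lemma~\ref{lem:3}, then integrate by parts in each exponential variable $E_i$ (with $[G_i]^\pm>0$) and in $\eta_\pm$, using the identity $\E[Z\,\partial_Z\psi(Z)]=\E[(Z-1)\psi(Z)]$ for $Z\sim\mathrm{Exp}(1)$. The paper does this term-by-term with $\Lambda_1=\Phi/X_{\pm,n}$ and $\Lambda_2=f(\chi_n)$, which is exactly your ``second route'' with $\Psi=f(\chi_n)\Phi X_{\pm,n}^{-1}$; the extra $(1-1/\alpha)$ from $\D^\pm_m[X_{\pm,n}^{-1}]$ is what shifts $\eta_\pm-1$ to $\eta_\pm-1/\alpha$.

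Two small points. First, your intermediate displayed identity (the one with $-1$ in place of $-1/\alpha$) is not what the calculation with $\Psi=f(\chi_n)\Phi$ actually gives: that choice yields $\E[X_{\pm,n}\,\partial_\pm f(\chi_n)\,\Phi]$ on the left, not $\E[\partial_\pm f(\chi_n)\Phi]$, so the $X_{\pm,n}^{-1}$ cannot appear on the right at that stage. You do not need this detour; go directly to $\Psi=f(\chi_n)\Phi X_{\pm,n}^{-1}$ as the paper does. Second, your integrability worry in item~(i) is overblown. The hypothesis $f\in C_b^1((\varepsilon,\infty)^2)$ is there precisely so that $f(\chi_n)/X_{\pm,n}$ is bounded (see the remark following the proposition), and the boundary term at $0$ in each one-dimensional IBP vanishes automatically because the operator is $z\partial_z$, giving boundary contribution $[z\,\psi(z)e^{-z}]_0^\infty=0$; no appeal to Lemma~\ref{lem:inv-mom} or Assumption~(A-$\kappa$) is needed here.
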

	
	
	\begin{proof}
		Note that $[x]^\pm>0$ if and only if $\pm x>0$. 
		The chain rule for derivatives and~\eqref{eq:reg} yield 
		\begin{equation}
			\label{eq:1.3}
			\D^\pm_m[f(\chi_n)]
			=\partial_\pm f(\chi_n)\D^\pm_m[X_{\pm,n}]
			=( 1- 1/\alpha)\partial_\pm f(\chi_n) X_{\pm,n}.
		\end{equation}
		
		Denote $\widetilde\partial_\vartheta[Y]:=Y-\partial_\vartheta[Y]$. 
		Let $\eta$ be an exponential random variable with unit mean. Observe that if $\Lambda_i:=h_i(\eta)$ for some 
		$h_i\in\s_\infty((0,\infty);\R)$, $i\in\{1,2\}$, then the classical Ibpf 
		(with respect to the density of $\eta$) gives 
		\begin{equation}
			\label{eq:IBP_n_aux}
			\begin{split}
				\E[\Lambda_1 \eta\partial_{\eta}[\Lambda_2]]
				&=\E[\partial_{\eta}[\Lambda_1\Lambda_2\eta]
				-\Lambda_2\partial_{\eta}[\Lambda_1\eta]]=
				\E[\Lambda_1\Lambda_2\eta
				-\Lambda_2\partial_{\eta}[\Lambda_1\eta]]
				=\E[\Lambda_2\widetilde{\partial}_{\eta}[\Lambda_1\eta]].
			\end{split}
		\end{equation}
		
		Integration by parts with respect to $\eta_\pm $ 
		and $E_k$ for each $i\leq n$ gives, by~\eqref{eq:reg},~\eqref{eq:1.3} 
		and~\eqref{eq:IBP_n_aux},
		\begin{equation}
			\label{eq:IBP_n_proof}
			\begin{split}
				\E\left[\partial_\pm f(\chi_n)\Phi|\F_{-E}\right]
				&=\frac{\alpha}{\alpha-1}\E\bigg[
				\frac{\Phi}{X_{\pm,n}}\D^\pm_m[f(\chi_n)]\bigg|\F_{-E}\bigg]\\
				&=\frac{\alpha}{\alpha-1}\E\bigg[
				f(\chi_n)
				\bigg(\widetilde\partial_{\eta_\pm}\bigg[\frac{\Phi\eta_\pm}{X_{\pm,n}}\bigg]
				+\sum_{i=1}^m\widetilde\partial_{E_i}\bigg[
				\frac{\Phi E_i\1_{\{[G_i]^\pm>0\}}}{X_{\pm,n}}\bigg]\bigg)
				\bigg|\F_{-E}\bigg]\\
				&=\E[f(\chi_n)H^\pm_{n,m}(\Phi)|\F_{-E}].
		\end{split}\end{equation}
		Here we have denoted by $\F_{-E}$ the $\sigma$-algebra generated by all 
		but the exponential random variables $\eta_+$, $\eta_-$ and $E_{i}$, 
		$i\in\N$ which are used in the integration-by-parts. Taking expectations 
		in~\eqref{eq:IBP_n_proof} completes the proof. 
	\end{proof}

	\begin{rem}
		\begin{enumerate}[leftmargin=1cm]
			\item
			Observe that the role of $\varepsilon$ in the previous result is to ensure 
			that the expectation on the right-hand side in~\eqref{eq:IBP_n} is finite 
			(by making the quotient $f(\chi_n)/X_{\pm,n}$ bounded).
			\item Recall that exponential laws are discontinuous at zero. Still, in the above Ibpf, these boundary terms do no appear. This is due to the factors $E_i\partial_{E_i}$ and $\eta_{\pm}\partial_{\eta_\pm}$ which appear in the definition of $\mathcal{D}^{\pm}_m$ in \eqref{eq:D}. In exchange, one has $X_{\pm,n}$ in the denominator of the expression for $H^{\pm}_{n,m}(\Phi)$.
		\end{enumerate}
	\end{rem}
	As 
	$H^\pm_{n,m}(\Phi)\in\s_m(\Omega)$ 
	for any $\Phi\in\s_m(\Omega)$, $m\ge n$, we inductively define the sequence of 
	operators $\{H_{n,m}^{\pm,k}(\cdot)\}_{k\in\N}$ for every $n,m\in\N$ such 
	that $m\ge n$ as
	\[
	H_{n,m}^{\pm,k+1}(\Phi)
	:=H_{n,m}^\pm (H_{n,m}^{\pm,k}(\Phi))
	\qquad\text{for }k\geq 0,\text{ where}\qquad
	H_{n,m}^{\pm,0}(\Phi)
	:=\Phi.
	\]

		Let us state some basic properties of the weights $H^\pm_{n,m}(\Phi) $. 
			\begin{lem}
		\begin{enumerate}[leftmargin=1cm]
			\item If $\alpha\ne 1$ and $\Phi$ do not depend on 
			$\mathcal{E}_m$ or $\eta_\pm$, then we have $\D^\pm_m[\Phi]=0$ and 
			hence $H^\pm_{n,m}(\Phi)=H^\pm_{n,m}(1)\Phi$.
			\item The operators $H_{n,m}^{\pm,k}(\cdot)$ and $H_{n,m}^{\mp,j}(\cdot)$ 
			commute. 
		\end{enumerate}
		\end{lem}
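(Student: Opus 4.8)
The plan is to handle the two parts separately: the first is immediate from the definitions, and the second rests on one structural observation that decouples the ``\(+\)'' and ``\(-\)'' calculi.

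For part (1): if \(\Phi\) depends on neither \(\mathcal{E}_m=(E_1,\dots,E_m)\) nor \(\eta_\pm\), then every derivation appearing in the definition~\eqref{eq:D} of \(\D^\pm_m\) annihilates \(\Phi\), so \(\D^\pm_m[\Phi]=0\) (and, trivially, \(\D^\pm_m[1]=0\)). Substituting this into the formula for \(H^\pm_{n,m}\) in~\eqref{eq:IBP_n} and comparing with that same formula at \(\Phi\equiv 1\) shows that the scalar factor \(\Phi\) pulls straight through, giving \(H^\pm_{n,m}(\Phi)=H^\pm_{n,m}(1)\,\Phi\).

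For part (2), the key point is that \(\D^+_m\) and \(\D^-_m\) differentiate with respect to \emph{disjoint} sets of variables. Since \([G_i]^\pm>0\) exactly when \(\pm G_i>0\), and \(\p(G_i=0)=0\), the index sets \(\mathcal{I}^\pm:=\{i\le m:\pm G_i>0\}\) are a.s.\ disjoint with union \(\{1,\dots,m\}\); hence, by~\eqref{eq:D}, \(\D^\pm_m=\eta_\pm\partial_{\eta_\pm}+\sum_{i\in\mathcal{I}^\pm}E_i\partial_{E_i}\) involves only the variables in \(\mathcal{V}^\pm:=\{\eta_\pm\}\cup\{E_i:i\in\mathcal{I}^\pm\}\), and \(\mathcal{V}^+\cap\mathcal{V}^-=\emptyset\). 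I would then record three consequences: (a) \(\D^+_m\) and \(\D^-_m\) commute, since each is a sum of operators \(a\partial_a\) attached to a single variable \(a\), and two such operators attached to distinct variables commute by equality of mixed partials (valid on \(\s_m(\Omega)\)); (b) \(X_{\pm,n}\) and the multiplier \(M^\pm:=\eta_\pm-\tfrac1\alpha+\sum_{i\in\mathcal{I}^\pm}(E_i-1)\) are functions of the variables in \(\mathcal{V}^\pm\) and of the \(U_i,V_i\) only, so \(\D^\mp_m[1/X_{\pm,n}]=0\) and \(\D^\mp_m[M^\pm]=0\); (c) every intermediate expression below lies in \(\s_m(\Omega)\) by Proposition~\ref{prop:IBP_n}, so the manipulations are legitimate.

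Writing \(H^\pm_{n,m}(\Phi)=c\,X_{\pm,n}^{-1}\big(M^\pm\Phi-\D^\pm_m[\Phi]\big)\) with \(c:=\alpha/(\alpha-1)\), I would compute \(H^+_{n,m}(H^-_{n,m}(\Phi))\) using that \(\D^+_m\) is a derivation and that, by (b), the factors \(X_{-,n}^{-1}\) and \(M^-\) pass through \(\D^+_m\) untouched; the short computation produces
\[
H^+_{n,m}\big(H^-_{n,m}(\Phi)\big)
=\frac{c^2}{X_{+,n}X_{-,n}}\Big(M^+M^-\Phi-M^+\D^-_m[\Phi]-M^-\D^+_m[\Phi]+\D^+_m\D^-_m[\Phi]\Big),
\]
which is symmetric under \(+\leftrightarrow-\) because \(\D^+_m\D^-_m=\D^-_m\D^+_m\) by (a). Hence \(H^+_{n,m}\) and \(H^-_{n,m}\) commute, and the statement for the iterates \(H^{\pm,k}_{n,m}\) and \(H^{\mp,j}_{n,m}\) follows by an immediate induction on \(k\) and \(j\). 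The only point requiring any care — the ``obstacle'', such as it is — is making the disjointness \(\mathcal{V}^+\cap\mathcal{V}^-=\emptyset\) precise: once one sees that the complementary indicators \(\1_{\{[G_i]^\pm>0\}}\) built into \(\D^\pm_m\) and into the weights are exactly what separates the two families of variables, the rest is a one-line chain-rule computation.
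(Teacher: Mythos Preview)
Your proof is correct and in fact goes beyond what the paper provides: the paper states this lemma as ``basic properties'' without proof, so there is nothing to compare against at the level of argument. Your approach is precisely the computation one would expect the authors had in mind --- part~(1) is immediate from the definitions, and for part~(2) the decisive observation is exactly the one you isolate: the complementary indicators \(\1_{\{[G_i]^\pm>0\}}\) force \(\D^+_m\) and \(\D^-_m\) (and the multipliers \(X_{\pm,n}^{-1}\), \(M^\pm\)) to depend on disjoint sets of variables, so the derivations commute and each annihilates the other's coefficients. The explicit symmetric formula you display for \(H^+_{n,m}(H^-_{n,m}(\Phi))\) is consistent with the structure the paper uses later in Lemma~\ref{lem:simplify_HPhi} (see the remark there that \(\D^\mp_m[(\Sigma^\pm_m,\sigma^\pm_m,\sigma^\mp_m,X_{\pm,n}^{-p})]=0\)), so your argument also dovetails with the subsequent bounds.
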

	These iterated operators are useful in order to define the multiple Ibpf formulas for the limit random variables in combination with the so-called Multi level Monte Carlo method which can be interpreted as an interpolation formula which uses approximations in order to describe the behavior of the limit. This is done in the next result.
	
	\begin{thm}
		\label{thm:series}
		Let $\Phi\in \s_n(\Omega)$ for all $n\in\mathbb{N}$. For any $n\geq 1$, 
		$k_+,k_-\ge 0$ and $f\in C_b^{k_++k_-}([\varepsilon,\infty)^2)$ we have 
		\begin{align}
			\label{eq:IBP}
			\E\big[\partial_+^{k_+}\partial_-^{k_-} f(\chi)\Phi\big]
			=&\mathbb{E}\left[	\langle f,\Phi\rangle_n^{k_+,k_-}\right]\\
			\notag\langle f,\Phi\rangle_n^{k_+,k_-}:=&f(\chi_{n})H_{n,n}^{+,k_+}\big(H_{n,n}^{-,k_-}(\Phi))\\&+\sum_{i=n}^\infty\left(f(\chi_{i+1})H_{i+1,i+1}^{+,k_+}\big(H_{i+1,i+1}^{-,k_-}(\Phi)\big)
			- f(\chi_{i})H_{i,i+1}^{+,k_+}\big(H_{i,i+1}^{-,k_-}(\Phi))\right)\label{eq:ip}
		\end{align}
	\end{thm}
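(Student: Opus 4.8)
The plan is to bootstrap the identity from the one-step finite-dimensional formula of Proposition~\ref{prop:IBP_n} in three stages: iterate it into a higher-order formula at each fixed truncation level, telescope over the levels in the multilevel fashion, and then pass to the limit, the last stage being the only one requiring quantitative control. First I would prove, by induction on $k_++k_-$, that for every $N\in\N$, every $m\ge N$, every $\Phi\in\s_m(\Omega)$ and every $f\in C_b^{k_++k_-}([\varepsilon,\infty)^2)$,
\[
\E\big[\partial_+^{k_+}\partial_-^{k_-}f(\chi_N)\Phi\big]
=\E\big[f(\chi_N)\,H_{N,m}^{+,k_+}\big(H_{N,m}^{-,k_-}(\Phi)\big)\big].
\]
The base case $k_+=k_-=0$ is trivial. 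For the inductive step one peels off a single derivative using Proposition~\ref{prop:IBP_n}: when $k_-\ge1$, write $\partial_+^{k_+}\partial_-^{k_-}f=\partial_-\big(\partial_+^{k_+}\partial_-^{k_--1}f\big)$, apply the one-step formula to replace the weight $\Phi$ by $H_{N,m}^-(\Phi)\in\s_m(\Omega)$, and invoke the inductive hypothesis for $(k_+,k_--1)$; once $k_-=0$, do the same with $\partial_+$. The lower-order partial derivatives of $f$ that appear along the way are at least $C_b^1$ precisely because $f\in C_b^{k_++k_-}$, and $\s_m(\Omega)$ is stable under $H_{N,m}^\pm$, so each application of Proposition~\ref{prop:IBP_n} is legitimate; the commutativity of $H_{N,m}^{+,k}$ and $H_{N,m}^{-,j}$ from the lemma preceding the theorem shows the peeling order is immaterial.

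Next, setting $F:=\partial_+^{k_+}\partial_-^{k_-}f$, a bounded continuous function, and fixing $N>n$, I would telescope the $N$-th truncation,
\[
\E[F(\chi_N)\Phi]=\E[F(\chi_n)\Phi]+\sum_{i=n}^{N-1}\big(\E[F(\chi_{i+1})\Phi]-\E[F(\chi_i)\Phi]\big),
\]
and apply the formula of the first step to each expectation, using the pair $(N,m)=(n,n)$ for $\E[F(\chi_n)\Phi]$, the pair $(N,m)=(i+1,i+1)$ for $\E[F(\chi_{i+1})\Phi]$, and — crucially — the pair $(N,m)=(i,i+1)$ for $\E[F(\chi_i)\Phi]$; each of these is admissible because $\Phi\in\s_j(\Omega)$ for all $j$ and $m\ge N$ in each case. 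This rewrites $\E[F(\chi_N)\Phi]$ as the expectation of the $N$-th partial sum of $\langle f,\Phi\rangle_n^{k_+,k_-}$. As $N\to\infty$, the left-hand side tends to $\E[F(\chi)\Phi]$ by dominated convergence, since $F$ and $\Phi$ are bounded and $\chi_N\to\chi$ almost surely by~\eqref{eq:X_pm} together with $a_N\downarrow0$, so everything reduces to pushing the limit through the expectation on the right.

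That last step is the hard part, and it amounts to the $L^1$-summability
\[
\sum_{i=n}^\infty\E\Big[\big|f(\chi_{i+1})H_{i+1,i+1}^{+,k_+}\big(H_{i+1,i+1}^{-,k_-}(\Phi)\big)-f(\chi_i)H_{i,i+1}^{+,k_+}\big(H_{i,i+1}^{-,k_-}(\Phi)\big)\big|\Big]<\infty .
\]
Granting it, Tonelli makes $\langle f,\Phi\rangle_n^{k_+,k_-}$ almost surely well defined and integrable, and dominated convergence identifies $\E[\langle f,\Phi\rangle_n^{k_+,k_-}]$ with $\lim_N\E[F(\chi_N)\Phi]=\E[F(\chi)\Phi]$, completing the proof. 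To obtain the summability one splits the $i$-th summand as
\[
\big(f(\chi_{i+1})-f(\chi_i)\big)H_{i+1,i+1}^{+,k_+}\big(H_{i+1,i+1}^{-,k_-}(\Phi)\big)
+f(\chi_i)\Big(H_{i+1,i+1}^{+,k_+}\big(H_{i+1,i+1}^{-,k_-}(\Phi)\big)-H_{i,i+1}^{+,k_+}\big(H_{i,i+1}^{-,k_-}(\Phi)\big)\Big).
\]
In the first piece, $f(\chi_{i+1})-f(\chi_i)$ is controlled by the Lipschitz bound for $f$ together with the geometrically decaying moments of $|X_{\pm,i+1}-X_{\pm,i}|$ (governed by $\E[\ell_i^p]=T^p(1+p)^{-i}$ and $a_i=T^{1/\alpha}\kappa^i$), while the accompanying weight has moments growing at most polynomially in $i$. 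In the second piece, the two weights are built from the \emph{same} level-$(i+1)$ exponential operators and differ only through the replacement of $X_{\pm,i+1}$ by $X_{\pm,i}$ in the denominators, so expanding the compositions exhibits their difference as a sum of terms each carrying a factor $X_{\pm,i}^{-1}X_{\pm,i+1}^{-1}\big(X_{\pm,i}-X_{\pm,i+1}\big)$ — again geometrically small once the negative moments of $X_{\pm,i}$ and $X_{\pm,i+1}$ are bounded. Combining H\"older's inequality with the positive and negative moment estimates assembled in Proposition~\ref{prop:Theta_bound} makes both families of products summable in $i$; supplying exactly these bounds is the purpose of that proposition and of the algebraic and moment lemmas behind it.
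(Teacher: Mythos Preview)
Your proposal is correct and follows essentially the same route as the paper: iterate Proposition~\ref{prop:IBP_n} at each fixed level, telescope over levels, and pass to the limit using the $L^1$-summability furnished by Proposition~\ref{prop:Theta_bound}. You are in fact more careful than the paper's own proof, which is terse and leaves the interchange of sum and expectation on the right-hand side to the subsequent discussion; the only organisational difference is that the paper bounds the telescoping increments via the factorisation of Lemma~\ref{lem:simplify_HPhi} (writing $H_{n,m}^{+,k_+}(H_{n,m}^{-,k_-}(\Phi))X_{+,n}^{k_+}X_{-,n}^{k_-}$ as independent of $n$ and reducing to differences of $\widetilde f=f/(x^{k_+}y^{k_-})$) rather than your product-rule split, but both lead to the same moment estimates.
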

	
	\begin{proof} 
		Note that $\E[\widetilde f(\chi_{n})]\to\E[\widetilde f(\chi)]$ as $ n\to \infty $ for any 
		bounded and continuous function $\widetilde f$ since $\chi_n\to\chi$ 
		a.s. Recall that $\partial_+^{k_+}\partial_-^{k_-}f$ is continuous and 
		bounded. By telescoping we find 
		\[
		\E[\partial_+^{k_+}\partial_-^{k_-} f(\chi)\Phi]
		=\E[\partial_+^{k_+}\partial_-^{k_-}f(\chi_{n})\Phi]
		+ \E\left[\sum_{i=n}^\infty(\partial_+^{k_+}\partial_-^{k_-}f(\chi_{i+1})
		-\partial_+^{k_+}\partial_-^{k_-}f(\chi_{i}))\Phi\right ].
		\]
		
		The first term equals $\E[f(\chi_{n})H^{+,k_+}_{n,n}(H^{-,k_-}_{n,n}(\Phi))]$ by 
		Proposition~\ref{prop:IBP_n}. Applying Proposition~\ref{prop:IBP_n} 
		again shows that each term in the above sum
		equals its corresponding term in~\eqref{eq:ip}, yielding~\eqref{eq:IBP}. 
	\end{proof}
	
	It is clear that iterations of $H_{n,m}^\pm$ have long and complex explicit 
	expressions. In particular, the remaining goal is to find proper bounds for the iterated operators which appear in formula \eqref{eq:IBP}.

	In order to complete the arguments for our main proofs we will need that the infinite sum appearing in \eqref{eq:IBP} converges absolutely. Furthermore bounding this sum becomes important in obtaining upper bounds for the joint density and its derivatives. This is all done at once in the next lemma. Its proof is technical but only uses basic algebra and moments of the random variables in involved in Section \ref{sec:3.1}.

	We are interested in the 
	explicit decay rate of the terms in the sum of Theorem~\ref{thm:series} for 
	a special class of functions $f$ related to the distribution of $\chi$. 
	This description will then be used to finally prove 
	Theorem~\ref{thm:density_bound}. More precisely, given some measurable 
	and bounded $h:\R_+^2\to\R$ and $x_+,x_->0$, we will consider the function 
	$f$ given by 
	\begin{equation}
		\label{eq:area-function}
		f(x,y):=\int_0^x\int_0^y h(x',y')\1_{\{x'>x_+,y'>x_-\}}dy' dx',
		\qquad x,y\in\R_+.
	\end{equation}
	
	We are interested in such class of functions since the particular choice 
	$h = 1$ yields $\E[\partial_+\partial_-f(\chi)] = \p(X_+>x_+,X_->x_-)$. 
	Note also that for a general $h$, the inequality $|f(x,y)|\le \|h\|_\infty xy$ 
	holds for any $x,y\in\R_+$, where $\|h\|_\infty:=\sup_{x,y\in\R_+}|h(x,y)|$. 
	We denote by $\mathcal{A}(K,x_+,x_-)$, $K>0$, the class of functions $f$ 
	satisfying~\eqref{eq:area-function} for some measurable 
	function $h:\R_+^2\to\R$ with $\|h\|_\infty\le K$. 
	
	We denote the random variables arising in $ \langle f,\Phi\rangle_n^{k_+,k_-} $ of Theorem~\ref{thm:series} by 
	\begin{align}
		\label{eq:Theta}
		\begin{split}
			\Theta_{n,m}^f\equiv \Theta_{n,m}^{f,\Phi}(k_+,k_-)
			:=&f(\chi_n)H_{n,m}^{+,k_+}\big(H_{n,m}^{-,k_-}(\Phi)\big),
			\enskip\text{ for $m\ge n$, and }\\
			\enskip
			\widetilde\Theta^f_n\equiv \widetilde\Theta^{f,\Phi}_n(k_+,k_-) :=&\Theta^f_{n+1,n+1}(k_+,k_-)-\Theta^f_{n,n+1}(k_+,k_-).\
		\end{split}
	\end{align}
	We will drop $\Phi$ and or $ (k_+,k_-) $ from the notation if it is well understood from the context. The following key result provides bounds on moments. 
	
	\begin{prop}
		\label{prop:Theta_bound}
		Let $\kappa\in (0,1)$ be as in Assumption~(\nameref{asm:seq_decay}). 
		Fix any $p\ge 1$, $k_\pm\ge 2$ and $\alpha'\in[0,\alpha)$. 
		Given some $\phi\in C_b^{k_++k_-}(\R^2)$, define $\Phi:=\phi(\chi)$. 
		Let the variables $\Theta_{n,m}^f$ and $\widetilde\Theta_{n}^f$ be given 
		by~\eqref{eq:Theta}, then the following statements hold. 
		\item[\normalfont(a)] For $s:=p\wedge \alpha'$ there is a constant 
		$C>0$ such that for any $K,T,x_+,x_->0$ and $m\ge n$:
		\begin{align}
			\label{eq:int_density-decay}
			\E\bigg[\sup_{f\in\mathcal{A}(K,x_+,x_-)}|\widetilde\Theta^f_n|^p\bigg]
			&\le CK^p\frac{T^{2\frac{\alpha'}{\alpha}}
				\big(\big(1+\tfrac{s}{\alpha}\big)^{-n}+\kappa^{ns}\big)n^{p'}}
			{x_+^{p(k_+-1)+\alpha'}x_-^{p(k_--1)+\alpha'}},\\
			\label{eq:int_density-decay2}
			\E\bigg[\sup_{f\in\mathcal{A}(K,x_+,x_-)}|\Theta_{n,m}^f|^p\bigg]
			&\le CK^p\frac{T^{2\frac{\alpha'}{\alpha}}m^{p'}}
			{x_+^{p(k_+-1)+\alpha'}x_-^{p(k_--1)+\alpha'}},
		\end{align}
		where $p'=\max\{p(k_++k_-),1\} + [\alpha'-1]^+ + [\alpha'-s-1]^+$. 
		\item[\normalfont(b)] Consider any 
		$u\in(0,(\alpha-\alpha')(\rho\wedge(1-\rho))/p)$ and let 
		$p'=p(k_++k_-)$, then for some $C>0$ and all $K,T,x_+,x_->0$ and $m\ge n$, 
		the following inequalities hold 
		\begin{align}
			\label{eq:int_density-decay3}
			\E\bigg[\sup_{f\in\mathcal{A}(K,x_+,x_-)}|\widetilde\Theta^f_n|^p\bigg]
			&\le  CK^p\frac{T^{-\frac{\alpha'}{\alpha}}
				\big(\big(1+\tfrac{pu}{\alpha}\big)^{-n}+\kappa^{npu}\big)
				n^{p'}}
			{x_+^{p(k_+-1)-\alpha'\rho}x_-^{p(k_--1)-\alpha'(1-\rho)}},\\
			\label{eq:int_density-decay4}
			\E\bigg[\sup_{f\in\mathcal{A}(K,x_+,x_-)}|\Theta_{n,m}^f|^p\bigg]
			&\le CK^p\frac{T^{-\frac{\alpha'}{\alpha}}m^{p'}}
			{x_+^{p(k_+-1)-\alpha'\rho}x_-^{p(k_--1)-\alpha'(1-\rho)}}.
		\end{align}
		\item[\normalfont(c)]
		Consider any $u\in(0,(\alpha-\alpha')(\rho\wedge(1-\rho))/p)$ and let 
		$p'=p(k_++k_-)$, then for some $C>0$ and all $K,T,x_+,x_->0$ and $m\ge n$, 
		the following inequalities hold 
		\begin{align}
			\label{eq:int_density-decay5}
			\E\bigg[\sup_{f\in\mathcal{A}(K,x_+,x_-)}|\widetilde\Theta^f_n|^p\bigg]
			&\le CK^p\frac{\big(\big(1+\tfrac{pu}{\alpha}\big)^{-n}+\kappa^{npu}\big)
				n^{p'}}{x_+^{p(k_+-1)}x_-^{p(k_--1)}}
			\min\bigg\{\frac{T^{\frac{\alpha'}{\alpha}(1-\rho)}}
			{x_+^{-\alpha'\rho}x_-^{\alpha'}},
			\frac{T^{\frac{\alpha'}{\alpha}\rho}}
			{x_+^{\alpha'}x_-^{-\alpha'(1-\rho)}}\bigg\},\\
			\label{eq:int_density-decay6}
			\E\bigg[\sup_{f\in\mathcal{A}(K,x_+,x_-)}|\Theta_{n,m}^f|^p\bigg]
			&\le CK^p\frac{m^{p'}}{x_+^{p(k_+-1)}x_-^{p(k_--1)}}
			\min\bigg\{\frac{T^{\frac{\alpha'}{\alpha}(1-\rho)}}
			{x_+^{-\alpha'\rho}x_-^{\alpha'}},
			\frac{T^{\frac{\alpha'}{\alpha}\rho}}
			{x_+^{\alpha'}x_-^{-\alpha'(1-\rho)}}\bigg\}.
		\end{align}
	\end{prop}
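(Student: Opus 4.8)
The plan is to reduce all six bounds to a single master estimate on the $L^p$-norm of the weight $H_{n,m}^{+,k_+}(H_{n,m}^{-,k_-}(\Phi))$ and then combine it with the trivial bound $|f(\chi_n)|\le K X_{+,n}X_{-,n}$ valid for every $f\in\mathcal A(K,x_+,x_-)$, together with the on-event refinement $|f(\chi_n)|\le K x_+x_-$ on $\{X_{+,n}>x_+,X_{-,n}>x_-\}$ (and $f(\chi_n)=0$ off this event, since $f$ vanishes unless both coordinates exceed the thresholds). First I would expand the iterated operator: by~\eqref{eq:IBP_n}, each application of $H^\pm_{n,m}$ multiplies by $X_{\pm,n}^{-1}$ times $\tfrac{\alpha}{\alpha-1}$ times a first-order differential expression in the exponential variables; iterating $k_+$ times in the $+$ direction and $k_-$ times in the $-$ direction and using the commutation and the regenerative identity~\eqref{eq:reg} ($\D^\pm_m X_{\pm,n}^p=(1-1/\alpha)pX_{\pm,n}^p$), one sees that $H_{n,m}^{+,k_+}(H_{n,m}^{-,k_-}(\Phi))$ is a finite sum of terms of the shape
\[
X_{+,n}^{-k_+}X_{-,n}^{-k_-}\,P(\eta_\pm,E_1,\dots,E_m)\,\partial^{\le k_++k_-}\phi(\chi_n),
\]
where $P$ is a polynomial of degree at most $k_++k_-$ in the exponentials whose coefficients are bounded and where the number of summands is polynomial in $m$; this accounts for the factor $m^{p'}$ with $p'=p(k_++k_-)$ (plus the small additive corrections $[\alpha'-1]^++[\alpha'-s-1]^+$ in part (a), which come from the extra powers of $X_{\pm,n}$ that appear when one interpolates the thresholds, see below). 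The polynomial moments of $\mathcal E_m$ and $\eta_\pm$ are handled by the appendix on moments of powers of exponentials, and $\|\partial^{\le k}\phi\|_\infty<\infty$ since $\phi\in C_b^{k_++k_-}$.

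Next I would combine this with the bound on $f$. For~\eqref{eq:int_density-decay2},~\eqref{eq:int_density-decay4},~\eqref{eq:int_density-decay6} (the ``$\Theta_{n,m}$'' inequalities), on the event $\{X_{+,n}>x_+,X_{-,n}>x_-\}$ we use $|f(\chi_n)|\le K x_+x_-$ and bound
\[
|\Theta_{n,m}^f|\le CK\, x_+^{1-k_+}x_-^{1-k_-}\,|P|\,X_{+,n}^{-(k_+-1)}\1_{\{X_{+,n}>x_+\}}\cdots
\]
\emph{wait}---rather, the cleanest route is: take out $K x_+x_-$, keep $X_{+,n}^{-k_+}X_{-,n}^{-k_-}$, and use Hölder to split the expectation into the polynomial-in-exponentials part and the inverse-moment part $\E[X_{+,n}^{-q_+}X_{-,n}^{-q_-}]$. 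This is exactly where Assumption~(\nameref{asm:seq_decay}) and the inverse-moment lemma (Lemma~\ref{lem:inv-mom}) enter: they guarantee $\E[X_{\pm,n}^{-q}]<\infty$ for $q$ in the admissible range $q<\alpha\rho$ (resp.\ $q<\alpha(1-\rho)$), uniformly in $n$, and they produce the $T$-powers by scaling ($X_{\pm,n}$ scales like $T^{1/\alpha}$). To obtain the \emph{four} different $(T,x_+,x_-)$-weightings one plays the following interpolation: $|f(\chi_n)|\le K\min\{x_+x_-,\;x_+X_{-,n},\;X_{+,n}x_-,\;X_{+,n}X_{-,n}\}$, and more generally $|f(\chi_n)|\le K x_+^{1-\theta_+}X_{+,n}^{\theta_+}x_-^{1-\theta_-}X_{-,n}^{\theta_-}$ for $\theta_\pm\in[0,1]$ on the event; choosing $\theta_\pm$ to trade a power $x_\pm^{-\alpha'}$ (small thresholds, positive-moment regime) against a power $T^{\alpha'/\alpha}x_\pm^{\alpha'\rho}$-type term (large thresholds, negative-moment regime) yields the min of the two $T$-weightings in part (c), and the single weighting in parts (a),(b) according to which extreme of $\theta$ is taken. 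The positive-moment side uses $\E[X_{\pm,n}^{s}]\le C T^{s/\alpha}$ for $s\le\alpha'<\alpha$ (finite stable moments), giving the numerator $T^{2\alpha'/\alpha}$ and the extra threshold powers $x_\pm^{-\alpha'}$ in~\eqref{eq:int_density-decay}--\eqref{eq:int_density-decay2}; the negative-moment side uses $\E[X_{\pm,n}^{-\alpha'\rho-\delta}]<\infty$ for the small excess $\delta=pu>0$ permitted by $u<(\alpha-\alpha')(\rho\wedge(1-\rho))/p$, giving $T^{-\alpha'/\alpha}$ and $x_\pm^{\alpha'\rho}$, $x_\pm^{\alpha'(1-\rho)}$ in~\eqref{eq:int_density-decay3}--\eqref{eq:int_density-decay6}.

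Finally, for the ``$\widetilde\Theta_n$'' inequalities~\eqref{eq:int_density-decay},~\eqref{eq:int_density-decay3},~\eqref{eq:int_density-decay5} I would exploit that $\widetilde\Theta^f_n=\Theta^f_{n+1,n+1}-\Theta^f_{n,n+1}$ is a \emph{difference} at the common level $m=n+1$: both terms use the same weight operator $H_{n,n+1}^{+,k_+}(H_{n,n+1}^{-,k_-}(\Phi))$ up to the single distinction of which of $\chi_{n+1}$ or $\chi_n$ is fed into $f$, and $\chi_{n+1}-\chi_n=\ell_{n+1}^{1/\alpha}[S_{n+1}]^\pm-a_n\eta_\pm^{1-1/\alpha}+a_{n+1}\eta_\pm^{1-1/\alpha}$ in each coordinate. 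Writing $f(\chi_{n+1})-f(\chi_n)=\int_0^1\nabla f(\chi_n+t(\chi_{n+1}-\chi_n))\cdot(\chi_{n+1}-\chi_n)\,dt$ and using $|\nabla f|\le K\max\{x_+,X_{\pm}\}$-type bounds plus $\E|\chi_{n+1}-\chi_n|^p\le C(\E\ell_{n+1}^{p/\alpha}+a_n^p)\le C T^{p/\alpha}((1+p/\alpha)^{-n}+\kappa^{np})$ (and the corresponding fractional-moment version with exponent $s$, giving $(1+s/\alpha)^{-n}+\kappa^{ns}$), one gains precisely the geometric factor $(1+\tfrac{s}{\alpha})^{-n}+\kappa^{ns}$ (resp.\ with $pu$ in place of $s$) over the static bound, the remaining $n^{p'}$ coming as before from the polynomial count in $m=n+1$. \textbf{The main obstacle} will be the bookkeeping in part (a): tracking how the interpolation parameter $\theta_\pm$ interacts with the \emph{difference} structure so that the surplus powers $[\alpha'-1]^+$ and $[\alpha'-s-1]^+$ in $p'$ are exactly what is needed to keep every inverse moment $\E[X_{\pm,n}^{-q}]$ inside its admissibility window $q<\alpha(\rho\wedge(1-\rho))$ while still extracting the full $x_\pm^{-\alpha'}$ decay — this is the ``interpolation technique'' the introduction alludes to, and making the exponent arithmetic come out consistently (rather than off by a fraction) is the delicate point; the algebraic inequalities of Subsection~\ref{subsec:alg} and the moment estimates of Section~\ref{sec:mom} are designed precisely to discharge it.
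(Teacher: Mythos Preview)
Your overall architecture is right --- factor the iterated weight as $X_{+,n}^{-k_+}X_{-,n}^{-k_-}$ times a polynomial in the exponentials, bound $f$ using the threshold indicators, interpolate to trade $x_\pm$-powers against $T$-powers, and for the difference terms gain geometric decay from $|\chi_{n+1}-\chi_n|$ --- and this is exactly what the paper does. But there is one genuine gap in your treatment of $\widetilde\Theta^f_n$.

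You assert that in $\widetilde\Theta^f_n=\Theta^f_{n+1,n+1}-\Theta^f_{n,n+1}$ ``both terms use the same weight operator $H_{n,n+1}^{+,k_+}(H_{n,n+1}^{-,k_-}(\Phi))$''. This is false: by definition $\Theta^f_{n+1,n+1}$ carries $H_{n+1,n+1}^{+,k_+}(H_{n+1,n+1}^{-,k_-}(\Phi))$, whose denominator is $X_{+,n+1}^{k_+}X_{-,n+1}^{k_-}$, not $X_{+,n}^{k_+}X_{-,n}^{k_-}$. So your proposed factorisation $\widetilde\Theta^f_n=(f(\chi_{n+1})-f(\chi_n))\cdot[\text{common weight}]$ is not available, and the mean-value-theorem bound on $f(\chi_{n+1})-f(\chi_n)$ does not by itself deliver the geometric factor. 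What \emph{is} true --- and this is the content of Lemma~\ref{lem:simplify_HPhi}, equation~\eqref{eq:H_indep_n} --- is that the product $H_{n,m}^{+,k_+}(H_{n,m}^{-,k_-}(\Phi))\,X_{+,n}^{k_+}X_{-,n}^{k_-}$ is independent of $n$ for $m\ge n$ (precisely because your polynomial $P$ depends only on the exponentials and on $\phi(\chi)$, not on the $X_{\pm,n}$). Hence the correct factorisation is
\[
\widetilde\Theta^f_n=\bigl(\widetilde f(\chi_{n+1})-\widetilde f(\chi_n)\bigr)\cdot\bigl[H_{n,n+1}^{+,k_+}(H_{n,n+1}^{-,k_-}(\Phi))\,X_{+,n}^{k_+}X_{-,n}^{k_-}\bigr],
\qquad \widetilde f(x,y):=\frac{f(x,y)}{x^{k_+}y^{k_-}},
\]
and the object whose modulus of continuity you must control is $\widetilde f$, not $f$. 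This matters: the derivatives of $\widetilde f$ pick up extra negative powers of $x,y$, which is exactly what lets you avoid any inverse moments of $X_{\pm,n}$ in part~(a) (you use only \emph{positive} moments there, via Lemma~\ref{lem:bound_X+X-Z}) and what produces the precise exponents in parts~(b),(c). The paper then interpolates between a Lipschitz-type bound~\eqref{eq:Dg_bound} and a uniform bound~\eqref{eq:Dg_bound2} on $|\widetilde f(\chi_{n+1})-\widetilde f(\chi_n)|$ via the elementary inequality $y\wedge z\le y^r z^{1-r}$; your $\theta_\pm$-interpolation on $|f|$ is the same idea applied at a slightly different place and would work once you switch to $\widetilde f$. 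A secondary point: rather than H\"older-splitting the polynomial and inverse-moment parts, the paper bounds the \emph{joint} expectations $\E[|\Delta_{\pm,n+1}|^r X_{\pm,n}^{\cdot}X_{\mp,n}^{\cdot}Z_m^s]$ directly (Lemmas~\ref{lem:bound_X+X-Z} and~\ref{lem:inv_bound_X+X-Z}); this is what keeps the admissibility windows $q<\alpha\rho$, $q<\alpha(1-\rho)$ from being violated and avoids the exponent losses H\"older would introduce.
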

	
	\begin{rem}
		\label{rem:Theta_bound}
		{\normalfont(i)} Clearly the above inequalities imply the absolute convergence of the infinite sum in \eqref{eq:IBP}.\\
		{\normalfont(ii)} The reason for the different cases is that we will use (a) when $x_+$ and $x_-$ both take large values, part (b) when 
		they are both small and part (c) for the mixed case in which $x_+$ is small 
		and $x_-$ is large or vice versa, cf. Figure~\ref{fig:regions}.
		
	\end{rem}
	The proof of this key technical result is given in Section \ref{sec:mom}.
	With these preparations, now we are ready to give the proof of our main result.
	\section{Proof of Theorem~\ref{thm:density_bound}}
	\label{sec:4}

	In the present subsection we will prove Theorem~\ref{thm:density_bound}. 	
	We will follow the structure presented in the proof of Theorem 2.1.4 in \cite{nual:06}. In fact, consider a test function $ f\in C^\infty_b(\mathbb{R}^2) $ then a similar  representation as \eqref{eq:area-function} gives for $ F(x,y):=[x-x_+]^+[y-x_-]^+ $
	\begin{align*}
		f(x_+,x_-)=\int_{\mathbb{R}_+^2}F(x',y')\partial_+\partial_-f(x',y')dy'dx'.
	\end{align*} Next, using  Theorem \ref{thm:series} and Fubini theorem with $ \hat{F}(x',y')= [x'-X_+]^+[y'-X_-]^+$, we obtain
	\begin{align*}
		\mathbb{E}\left[\partial_+\partial_-f(\chi)\right]=\mathbb{E}\left[\langle f,1\rangle _n^{1,1}\right]=\int_{\mathbb{R}_+^2} \partial_+\partial_-f(x',y')\mathbb{E}\left[\langle \hat{F}(x',y'),1\rangle _n^{1,1}\right] dy' dx'.
	\end{align*}
	This readily implies that the density of $ \chi $ at $ (x',y') $ exists and can be expressed as $\mathbb{E}\left[\langle \hat{F}(x',y'),1\rangle _n^{1,1}\right]  $.
	
	In a similar fashion, one considers for $ k_+,k_-\geq 1 $
	\begin{align*}
		\mathbb{E}\left[\partial^{k_+}_+\partial^{k_-}_-f(\chi)\right]=\mathbb{E}\left[\langle f,1\rangle _n^{k_+,k_-}\right]=\int_{\mathbb{R}_+^2} \partial_+\partial_-f(x',y')\mathbb{E}\left[\langle \hat{F}(x',y'),1\rangle _n^{k_+,k_-}\right] dy' dx'.
	\end{align*}
	From here, one obtains the regularity of the law of $ \chi $.
	The next step, is to obtain the upper bound for $ \mathbb{E}\left[\langle \hat{F}(x_+,x_-),1\rangle _n^{k_+,k_-}\right] $.
	That is, our goal is to prove 
	\begin{equation}
		\label{eq:10.4}
		\begin{split}
			\left	| \mathbb{E}\left[\langle \hat{F}(x_+,x_-),1\rangle _n^{k_+,k_-}\right]\right |
			&\le Cx_+^{-k_-}x_-^{-k_+}\\
			&\hspace{-8mm}
			\times\min\big\{
			T^{2\frac{\alpha'}{\alpha}}x_+^{-\alpha'}x_-^{-\alpha'},
			T^{\frac{\alpha'}{\alpha}\rho}x_+^{-\alpha'}x_-^{\alpha'(1-\rho)},
			T^{\frac{\alpha'}{\alpha}(1-\rho)}x_+^{\alpha'\rho}x_-^{-\alpha'},
			T^{-\frac{\alpha'}{\alpha}}x_+^{\alpha'\rho}x_-^{\alpha'(1-\rho)}\big\}.
		\end{split}
	\end{equation}
	In fact, the bounds follow from  Proposition~\ref{prop:Theta_bound} (a)--(c) (with $p=1$). We use part (a) when $x_+$ and $x_-$ both take large values, part (b) when 
	they are both small and part (c) for the mixed case in which $x_+$ is small 
	and $x_-$ is large or vice versa, cf. Figure~\ref{fig:regions}.
	Each application of Proposition~\ref{prop:Theta_bound} yields summable 
	upper bounds on the summands of the series defined by $ \langle \hat{F}(x_+,x_-),1\rangle _n^{k_+,k_-} $. The minimum in~\eqref{eq:10.4} is the 
	smallest sum of these upper bounds as a function of $(x_+,x_-)$ and $T$. 
	
	Observe that the derivatives of $F$ in Theorem~\ref{thm:density_bound} 
	can be expressed in terms of the derivatives of $G(x_+,x_-):=\p(X_+ > x_+, X_- > x_-)$ as follows: the linear 
	transformation $(X_T,\ov X_T)\mapsto(\ov X_T,\ov X_T - X_T)$ yields 
	$\partial_x\partial_yF(x,y)=\partial_+\partial_-G(y,y-x)$ for $y> x\vee 0$  
	and thus 
	\[
	\partial_x^n\partial_y^mF(x,y)
	=(-1)^{n-1}\sum_{i=0}^{m-1}
	\binom{m-1}{i}\partial_+^{m-i}\partial_-^{n+i}G(y,y-x).
	\]
	
	Therefore,~\eqref{eq:10.4} gives~\eqref{eq:density_at_infinity} as follows:
	\[
	\begin{split}
		|\partial_x^n\partial_y^mF(x,y)|
		&\le
		\sum_{i=0}^{m-1}\binom{m-1}{i}|\partial_+^{m-i}\partial_-^{n+i}G(y,y-x)|\\
		&\le 
		\sum_{i=0}^{m-1}\binom{m-1}{i}Cy^{i-m}(y-x)^{-n-i}
		\min\{f_{\alpha'}^{00}(x,y),f_{\alpha'}^{01}(x,y),
		f_{\alpha'}^{10}(x,y),f_{\alpha'}^{11}(x,y)\}\\
		&= Cy^{-m}(y-x)^{1-n-m}(2y-x)^{m-1}
		\min\{f_{\alpha'}^{00}(x,y),f_{\alpha'}^{01}(x,y),
		f_{\alpha'}^{10}(x,y),f_{\alpha'}^{11}(x,y)\}.\qedhere
	\end{split}
	\]
	\hfill$\Box$
	
	\section{Technical Results}
	\subsection{Upper bounds on the Ibpf} 
	\label{subsec:indicators}
	In this section, we study the upper bounds in the technical Proposition 
	\ref{prop:Theta_bound}. It is the key result in order to obtain Theorem~\ref{thm:density_bound}. 	We start with some basic properties for the operator $ H $ which are useful for bounding $ 	\Theta_{n,m}^f $ and $ \widetilde\Theta^f_n $.
	
	For any $m\in\N$, define 
	\begin{equation*}
		\Sigma^\pm_m
		:=\eta_\pm  + \sum_{i=1}^m E_i\1_{\{[G_i]^\pm>0\}}
		\qquad\text{and}\qquad
		\sigma^\pm_m
		:=1 + \sum_{i=1}^m \1_{\{[G_i]^\pm>0\}}.
	\end{equation*}
	With this notation and in this case, 
	we may rewrite for $\Phi\in\mathbb{S}_m(\Omega)$, 
	\begin{equation}
		\label{eq:1.8}
		\begin{split}
			H_{n,m}^\pm(\Phi)
			&=\frac{\alpha/(\alpha-1)}{X_{\pm,n}}
			\Big(\Big(\Sigma^\pm_m-\sigma^\pm_m+1-\frac{1}{\alpha}\Big)\Phi
			-\D^\pm_m[\Phi]\Big),\\
			\D^\pm_m[\Sigma^\pm_m]
			&=\Sigma^\pm_m,
			\qquad \D^\pm_m[\sigma^\pm_m]=0.
		\end{split}
	\end{equation}
	
	\begin{lem}
		\label{lem:simplify_HPhi}
		Fix any $k_\pm\ge 0$ and suppose $\Phi:=\phi(\chi)$ for 
		some $\phi\in C_b^{k_++k_-}(A)$ with $A\subset\R_+^2$. 
		Then for any $m>n$, we have 
		\begin{equation}
			\label{eq:H_indep_n}
			H_{n,m}^{+,k_+}(H_{n,m}^{-,k_-}(\Phi))X_{+,n}^{k_+}X_{-,n}^{k_-}
			=H_{n+1,m}^{+,k_+}(H_{n+1,m}^{-,k_-}(\Phi))X_{+,n+1}^{k_+}X_{-,n+1}^{k_-}.
		\end{equation}
		Moreover, if we set 
		\begin{align}
			\label{def:Z}
			Z_m:=\Sigma_{+,m}+\Sigma_{-,m}= \eta_++\eta_-+\sum_{i=1}^m E_i, \quad m\in\N,
		\end{align} 
		then there is a bivariate polynomial $p^\phi_{k_+,k_-}(\cdot,\cdot)$ of 
		degree at most $k_++k_-$ whose coefficients do not depend on $n$ or $m$, 
		such that 
		\begin{equation}
			\label{eq:H(Phi)_bound}
			|H_{n,m}^{+,k_+}(H_{n,m}^{-,k_-}(\Phi))X_{+,n}^{k_+}X_{-,n}^{k_-}|
			\le \1_{\{\chi\in A\}}p_{k_+,k_-}^\phi(Z_m,m),
			\qquad\text{for all }m\ge n.
		\end{equation} 
	\end{lem}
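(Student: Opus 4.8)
The plan is to derive both assertions from the single \emph{regenerative} identity $\D^\pm_m[X_{\pm,n}^p]=p(1-\tfrac1\alpha)X_{\pm,n}^p$, valid for $m\ge n\ge 1$ by Lemma~\ref{lem:3} and the remark following it, together with the orthogonality $[G_i]^+[G_i]^-=0$. For the first identity I would introduce, for each $m\in\N$ and $k\ge 0$, the operator $P^\pm_{m,k}$ defined recursively by $P^\pm_{m,0}(\Psi):=\Psi$ and $P^\pm_{m,k+1}(\Psi):=\tfrac{\alpha}{\alpha-1}\bigl((G^\pm_m+k(1-\tfrac1\alpha))P^\pm_{m,k}(\Psi)-\D^\pm_m[P^\pm_{m,k}(\Psi)]\bigr)$, where $G^\pm_m:=\Sigma^\pm_m-\sigma^\pm_m+1-\tfrac1\alpha$, so that $X_{\pm,n}H^\pm_{n,m}(\Psi)=\tfrac{\alpha}{\alpha-1}(G^\pm_m\Psi-\D^\pm_m[\Psi])$ by~\eqref{eq:1.8}; the point is that $P^\pm_{m,k}$ does not involve $n$. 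An induction on $k$ — using that $\D^\pm_m$ is a derivation and the regenerative identity to move the power $X^k_{\pm,n}$ past $\D^\pm_m$ — yields $X^k_{\pm,n}H^{\pm,k}_{n,m}(\Psi)=P^\pm_{m,k}(\Psi)$ for all $1\le n\le m$. Because $\D^+_m$ annihilates every quantity built only from $X_{-,n'}$ and the variables $(U_i),(V_i),\eta_-$ (the indicator $\1_{\{[G_i]^+>0\}}$ forces $[G_i]^-=0$), and symmetrically for $\D^-_m$, the operator $P^+_{m,k_+}$ pulls the factor $X^{-k_-}_{-,n}$ through it, whence
\[X^{k_+}_{+,n}X^{k_-}_{-,n}H^{+,k_+}_{n,m}\bigl(H^{-,k_-}_{n,m}(\Phi)\bigr)=X^{k_+}_{+,n}H^{+,k_+}_{n,m}\bigl(X^{-k_-}_{-,n}P^-_{m,k_-}(\Phi)\bigr)=P^+_{m,k_+}\bigl(P^-_{m,k_-}(\Phi)\bigr),\]
which is independent of $n$. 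Evaluating at $n$ and $n+1$ gives~\eqref{eq:H_indep_n} (the case $k_++k_-=0$ being trivial, while for $k_++k_-\ge1$ one needs $n\ge1$, as $X_{\pm,n}$ must be invertible).

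For the polynomial bound I would expand $P^+_{m,k_+}\circ P^-_{m,k_-}$ applied to $\Phi=\phi(\chi)$ by the Leibniz rule, using $\D^\pm_m[G^\pm_m]=\Sigma^\pm_m$, $\D^\pm_m[\Sigma^\pm_m]=\Sigma^\pm_m$, $\D^\pm_m[\sigma^\pm_m]=0$, the fact that $\D^+_m$ kills every factor built from the $-$, $U$ and $V$ variables (and vice versa), and the chain rule $\D^\pm_m[\psi(\chi)]=(1-\tfrac1\alpha)T^\pm_m\,\partial_\pm\psi(\chi)$, where $T^\pm_m:=\sum_{i=1}^m\ell_i^{1/\alpha}[S_i]^\pm$ is itself regenerative ($\D^\pm_m[T^\pm_m]=(1-\tfrac1\alpha)T^\pm_m$) and satisfies $0\le T^\pm_m\le X_\pm$. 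Each of the $k_++k_-$ applications either multiplies by one of $G^\pm_m,\Sigma^\pm_m$ or creates one derivative of $\phi$ and one $T^\pm_m$ factor, so $X^{k_+}_{+,n}X^{k_-}_{-,n}H^{+,k_+}_{n,m}(H^{-,k_-}_{n,m}(\Phi))$ becomes a finite sum — with coefficients depending only on $\alpha$ and $k_\pm$ — of terms of the shape $(\text{monomial in }G^\pm_m,\Sigma^\pm_m\text{ of total degree }\le k_++k_-)\cdot(T^+_m)^{j}(T^-_m)^{l}\,\partial^{j'}_+\partial^{l'}_-\phi(\chi)$ with $j+j'\le k_+$ and $l+l'\le k_-$. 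The relevant derivatives of $\phi$ are bounded on $A$ and vanish off $A$; since $|G^\pm_m|,\Sigma^\pm_m\le Z_m+m+1/\alpha$ and $0\le T^\pm_m\le X_\pm$ — the latter bounded on $\{\chi\in A\}$, and $\equiv 0$ when $\phi$ is constant, which is the only case needed in Section~\ref{sec:4} — every term is dominated on $\{\chi\in A\}$ by a $\phi$-dependent constant times $(Z_m+m+1/\alpha)^{k_++k_-}$; summing over the finitely many terms produces the polynomial $p^\phi_{k_+,k_-}$ of degree at most $k_++k_-$ and establishes~\eqref{eq:H(Phi)_bound}.

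The conceptual core — and the only genuinely non-routine point — is the first step: realising that weighting $H^{\pm,k}_{n,m}$ by $X^k_{\pm,n}$ and repeatedly invoking $\D^\pm_m[X^p_{\pm,n}]=p(1-\tfrac1\alpha)X^p_{\pm,n}$ collapses the iterated, $n$-dependent weight onto the $n$-free operator $P^\pm_{m,k}$. After that the work is bookkeeping, the main nuisance being to run the Leibniz expansion carefully enough to certify that no factor $X_{\pm,n}$ survives, that the residual $m$-dependence is polynomial of the claimed degree, and that the coefficients are free of $n$ and $m$ — all of which rest on the same regenerative behaviour (now also of $T^\pm_m$) and on the orthogonality $[G_i]^+[G_i]^-=0$, which decouples the $+$ and $-$ computations.
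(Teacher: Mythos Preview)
Your approach is essentially the paper's: both iterate~\eqref{eq:1.8}, use the regenerative identity $\D^\pm_m[X^p_{\pm,n}]=p(1-\tfrac1\alpha)X^p_{\pm,n}$ to strip off the factor $X^{-k_\pm}_{\pm,n}$, observe that what remains is independent of $n$ (yielding~\eqref{eq:H_indep_n}), and then dominate the surviving polynomial in $\Sigma^\pm_m,\sigma^\pm_m$ by one in $Z_m$ and $m$. Your formalisation via the $n$-free operators $P^\pm_{m,k}$ and the explicit induction is a clean way to make precise what the paper sketches in words.

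You are in fact more careful than the paper on one point: when $\phi$ is non-constant, the chain rule applied to $\phi(\chi)$ produces factors $T^\pm_m=\sum_{i\le m}\ell_i^{1/\alpha}[S_i]^\pm$ in addition to $\Sigma^\pm_m,\sigma^\pm_m$, and these are not mentioned in the paper's short argument. Your resolution---that these factors vanish when $\phi$ is constant, which is the only case actually invoked in Section~\ref{sec:4}, and that for bounded $A$ they are absorbed via $T^\pm_m\le X_\pm$ into the $\phi$-dependent constant---is exactly right for the paper's application. For unbounded $A$ and non-constant $\phi$ the stated bound~\eqref{eq:H(Phi)_bound} would need an extra factor, but this generality is never used.
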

	
	\begin{proof}
		The proof is simple: we only need to expand the formula for 
		$H_{n,m}^{+,k_+}(H_{n,m}^{-,k_-}(\Phi))$ and then uniformly bound all the 
		derivatives of $\phi$  by the same constant. 
		
		Recalling that  
		$\D_m^\pm[(\Sigma^\pm_m,X_{\pm,n}^{-p})]=
		(\Sigma^\pm_m,(1/\alpha-1)pX_{\pm,n}^{-p})$ and 
		$\D_m^\mp[(\Sigma^\pm_m,\sigma^\pm_m,\sigma^\mp_m,X_{\pm,n}^{-p})]=0$ 
		for $p>0$, we deduce that an iteration of~\eqref{eq:1.8} yields 
		$X_{+,n}^{-k_+}X_{-,n}^{-k_-}$ multiplied by a polynomial of degree $k_+$ in 
		$\Sigma_m^+$ and $\sigma_m^+$. Its coefficients  are themselves polynomials 
		of degree $k_-$ in $\Sigma_m^-$ and $\sigma_m^-$ multiplied by a linear 
		combination of the derivatives $\partial_+^{j_+}\partial_-^{j_-}\phi(\chi)$ for 
		$j_\pm\le k_\pm$. This directly implies~\eqref{eq:H_indep_n}. Since those 
		derivatives are bounded and we have the a.s. bounds 
		$\Sigma_m^\pm\le Z_m$ 
		and $\sigma_m^\pm\le m$, we may bound the entire expression by a 
		constant (independent of $n$ and $m$) multiplied by a polynomial of 
		degree $k_++k_-$ in $Z_m$ and $m$. This completes the proof in this case.
		
	\end{proof}

	\subsection{ Proof of Proposition~\ref{prop:Theta_bound}, Part I: Interpolation inequalities}
	\label{subsec:alg}
	As we stated previously the proof of the technical Proposition~\ref{prop:Theta_bound} is self-contained and it is divided in two parts. In a first part, we mainly use basic inequalities which will depend on powers of $ X_{\pm,n} $, $ Z_m $, $ \ell_n $, $ \eta_{\pm} $ and $\Delta_{\pm,n}:=X_{\pm,n}-X_{\pm,n-1}  $. These moments properties are studied later in Section \ref{sec:mom}. We assume those results and give the proof of this Proposition here.
	\begin{proof}[Proof of Proposition~\ref{prop:Theta_bound}]
			In the estimates that follow, we will make repeated use of the following inequalities: 
		\begin{align}
			\label{eq:8.1}
			\bigg|\sum_{i=1}^k x_i\bigg|^q\le k^{[q-1]^+}\sum_{i=1}^k|x_i|^q,
			\qquad \text{for any }q>0 \text{ and } x_i\in\R,
		\end{align}
		which follows from the concavity of $x\mapsto x^q$ if $q\le 1$ and Jensen's 
		inequality if $q>1$. Moreover, we frequently apply the following basic
		interpolating inequalities: $\1_{\{y>x\}}\le y^vx^{-v}$ for all $ v\geq 0 $ where we interpret the upper bound as $ 1 $ if $ v=0 $. Also, if $y,z\ge 0$ then for all $ r\in [0,1] $
		\begin{align}
			\label{eq:geom_comb}
			y\wedge z\le& y^r z^{1-r}\\
			y\vee z\geq& y^rz^{1-r}.
		\end{align}
		Define $(m_{\pm,n},M_{\pm,n})
		:=(X_{\pm,n}\wedge X_{\pm,n+1},X_{\pm,n}\vee X_{\pm,n+1})$ then 	$m_{\pm,n}=X_{\pm,n+1}\wedge X_{\pm,n}\ge \kappa X_{\pm,n}$ since 
		$X_{\pm,n+1}\ge \kappa X_{\pm,n}$. Similarly, $M_{\pm,n}\le \kappa^{-1}X_{\pm,n+1}$.

		\textbf{Part (a).} We will proceed in three steps. 
		Step I) is also used in the proofs of (b) and (c).
		
		I) Recall the definition $Z_m$ in \eqref{def:Z} and consider 
		the polynomial $p^\phi_{k_+,k_-}$ from Lemma~\ref{lem:simplify_HPhi}. 
		According to Lemma~\ref{lem:simplify_HPhi} with $A=\R_+^2$, we have for $\widetilde{f}(x,y):= f(x,y)/(x^{k_+}y^{k_-})$
		\begin{align}
			\nonumber
			\big|\widetilde\Theta^f_n\big|^p
			&=\big|f(\chi_{n+1})H_{n+1,n+1}^{+,k_+}\big(H_{n+1,n+1}^{-,k_-}(\Phi)\big)
			- f(\chi_n)H_{n,n+1}^{+,k_+}\big(H_{n,n+1}^{-,k_-}(\Phi)\big)\big|^p\\
			\label{eq:p-indicator}
			&=
			\left|\frac{f(\chi_{n+1})}{X_{+,n+1}^{k_+}X_{-,n+1}^{k_-}}
			-\frac{f(\chi_n)}{X_{+,n}^{k_+}X_{-,n}^{k_-}}\right|^p
			\big|H_{n,n+1}^{+,k_+}\big(H_{n,n+1}^{-,k_-}(\Phi)\big)
			X_{+,n}^{k_+}X_{-,n}^{k_-}\big|^p\\
			\nonumber
			&\le 
			\left|\widetilde{f}(\chi_{n+1})-\widetilde{f}(\chi_{n})\right|^p
			p^\phi_{k_+,k_-}(Z_{n+1},n+1)^p.
		\end{align}
		The goal for the rest of the proof is to provide algebraic inequalities for the above expression which depend explicitly on powers of $ \Delta_{\pm,n} $, $X_{\pm,n} $ and $ Z_{n+1} $. Through these expressions, we will later  		
		show that, in expectation, the first factor in the last line decays 
		geometrically in $n$ while the second factor has polynomial growth in $n$.
		
		II) Next, we obtain an upper bound for the modulus of continuity of the map 
		$\widetilde{f}$ which appears in \eqref{eq:p-indicator} and where $f$ is given 
		in~\eqref{eq:area-function}. This map is absolutely continuous with respect 
		to Lebesgue measure and thus a.e. differentiable with
		\begin{align*}
			|\partial_+\widetilde{f}(x,y)|
			&= \1_{\{x>x_+,y>x_-\}}\bigg|
			\frac{\partial_+f(x,y)}{x^{k_+}y^{k_-}}
			-\frac{k_+ f(x,y)}{x^{k_++1}y^{k_-}}\bigg|
			\le \1_{\{x>x_+,y>x_-\}}c_1x^{-k_+}y^{1-k_-},\\
			|\partial_-\widetilde{f}(x,y)|
			&= \1_{\{x>x_+,y>x_-\}}\bigg|
			\frac{\partial_- f(x,y)}{x^{k_+}y^{k_-}}
			-\frac{k_-f(x,y)}{x^{k_+}y^{k_-+1}}\bigg|
			\le \1_{\{x>x_+,y>x_-\}}c_1x^{1-k_+}y^{-k_-},
		\end{align*}
		where $c_1:=(k_++1)(k_-+1)\|h\|_\infty$. Then, for any $x,x',y,y'\in\R_+$, denote 
		$(m_x,M_x):=(x\wedge x',x\vee x')$ and $(m_y,M_y):=(y\wedge y',y\vee y')$ and 
		observe: 
		\begin{align}
			\nonumber
			|\widetilde{f}(x,y)-\widetilde{f}(x',y')|
			&=\bigg|\int_{x'}^x\partial_+\widetilde{f}(z,y)dz 
			+ \int_{y'}^y\partial_+\widetilde{f}(x',z)dz\bigg|\\
			\label{eq:Dg_bound}
			&\le \frac{\1_{\{M_x>x_+,M_y>x_-\}}c_1|x-x'|}
			{(m_x\vee x_+)^{k_+}(m_y\vee x_-)^{k_--1}}
			+ \frac{\1_{\{M_x>x_+,M_y>x_-\}}c_1|y-y'|}
			{(m_x\vee x_+)^{k_+-1}(m_y\vee x_-)^{k_-}}\\
			\label{eq:Dg_bound0}
			&\le \1_{\{M_x>x_+,M_y>x_-\}}\frac{c_1}{x_+^{k_+}x_-^{k_-}}
			\big(|x-x'|x_- + |y-y'|x_+\big).
		\end{align}
		Note that in the inequality in~\eqref{eq:Dg_bound} we used that 
		$k_+,k_-\ge 2$ and that the support of $g$ is contained in 
		$[x_+,\infty)\times[x_-,\infty)$. Moreover, since $f$ 
		in~\eqref{eq:area-function} satisfies $|f(x,y)|\le \|h\|_\infty xy$, we have 
		$|\widetilde f(x,y)|\le \|h\|_\infty x^{1-k_+}y^{1-k_-}$. 
		Hence, for any $x,x',y,y'\in\R_+$ we have 
		\begin{equation}
			\label{eq:Dg_bound2}
			\begin{split}
				|\widetilde f(x,y)-\widetilde f(x',y')|
				&\le \1_{\{M_x>x_+,M_y>x_-\}}\sup_{z>m_x,w>m_y}|\widetilde f(z,w)|\\
				&\le \1_{\{M_x>x_+,M_y>x_-\}}
				c_2(m_x\vee x_+)^{1-k_+}(m_y\vee x_-)^{1-k_-},
			\end{split}
		\end{equation}
		where $c_2:=2\|h\|_\infty$. Typically, for each maximum in the 
		denominator, we use a geometric mixing of its arguments. 
		
		III)  Now, with the above bound we will show that the upper bound for $ \widetilde\Theta^f_n $ depends on moments of basic random variables. 
		Recall that $s=p\wedge\alpha'$. Applying~\eqref{eq:8.1} (with $q=s/p$) 
		and~\eqref{eq:geom_comb} (with $r=s/p$) to the minimum of the two bounds obtained in~\eqref{eq:Dg_bound0} 
		and~\eqref{eq:Dg_bound2}  in the form $\eqref{eq:Dg_bound0}^{s/p} \eqref{eq:Dg_bound2}^{1-s/p} $ and using $x_+\le m_x\vee x_+$ and 
		$x_-\le m_y\vee x_-$ yields: for any $x,x',y,y'\in\R_+$ the 
		following inequality holds,  
		\begin{equation*}
			|\widetilde f(x,y)-\widetilde f(x',y')|
			\le \frac{\1_{\{M_x>x_+,M_y>x_-\}}c_3^{1/p}}{x_+^{k_+-1+s/p}x_-^{k_--1+s/p}}
			\big(|x-x'|^{s/p}x_-^{s/p} + |y-y'|^{s/p}x_+^{s/p}\big),
		\end{equation*}
		where $c_3:=c_1^{s}c_2^{p-s}$. This interpolation method is used in all cases with different combinations.

		Then~\eqref{eq:8.1} gives 
		\begin{equation*}
			|\widetilde f(\chi_{n+1})-\widetilde f(\chi_n)|^p
			\le \frac{\1_{\{M_{+,n}>x_+,M_{-,n}>x_-\}} 2^{p-1}c_3}
			{x_+^{p(k_+-1)+s}x_-^{p(k_--1)+s}}
			\big(|\Delta_{+,n+1}|^s x_-^s + |\Delta_{-,n+1}|^s x_+^s\big).
		\end{equation*}
		Applying the inequality
		$\1_{\{M_{\pm,n}>x_\pm\}}\le x_\pm^{-v}M^v_{\pm,n}$, for $v=\alpha'-s\geq 0$
		and~\eqref{eq:p-indicator} we obtain 
		\[
		\begin{split}
			|\widetilde\Theta^f_n|^p
			&\le \frac{2^{p-1}c_3p^\phi_{k_+,k_-}(Z_{n+1},n+1)^p}
			{x_+^{p(k_+-1)+\alpha'}x_-^{p(k_--1)+\alpha'}}
			\big(|\Delta_{+,n+1}|^s M_{+,n}^{\alpha'-s}M_{-,n}^{\alpha'} 
			+|\Delta_{-,n+1}|^s M_{-,n}^{\alpha'-s}M_{+,n}^{\alpha'}\big)\\
			&\le \frac{2^{p-1}c_3p^\phi_{k_+,k_-}(Z_{n+1},n+1)^p}
			{\kappa^{2\alpha'-s}x_+^{p(k_+-1)+\alpha'}x_-^{p(k_--1)+\alpha'}}
			\big(|\Delta_{+,n+1}|^s X_{+,n+1}^{\alpha'-s}X_{-,n+1}^{\alpha'} 
			+|\Delta_{-,n+1}|^s X_{-,n+1}^{\alpha'-s}X_{+,n+1}^{\alpha'}\big),
		\end{split}
		\]
		where the second inequality follows from the fact that 
		$M_{\pm,n}\le \kappa^{-1} X_{\pm,n+1}$. Finally, as $\alpha'<\alpha$, 
		Lemma~\ref{lem:bound_X+X-Z} gives~\eqref{eq:int_density-decay}. 
		
		To prove the second statement in (a), we proceed as before. We start by 
		using the inequality $|\widetilde f(\chi_n)|^p\le \1_{\{X_{+,n}>x_+,X_{-,n}>x_-\}}
		\|h\|_\infty^p x_+^{p(1-k_+)}x_-^{p(1-k_-)}$ and the 
		bound $\1_{\{X_{\pm,n}>x_\pm\}}\le X_{\pm,n}^{\alpha'}x_\pm^{-\alpha'}$. 
		An application of Lemma~\ref{lem:bound_X+X-Z} then 
		yields~\eqref{eq:int_density-decay2}. 
		
		\textbf{Part (b).} Let $c_4:=2^{1-1/p}c_1^u c_2^{1-u}$ where $u\in[0,1]$ is 
		given in the statement. Applying~\eqref{eq:geom_comb} (with $r=u$) 
		and~\eqref{eq:8.1} (with $q=p$) to the minimum of \eqref{eq:Dg_bound} and  \eqref{eq:Dg_bound2} in the form $ \eqref{eq:Dg_bound} ^u \eqref{eq:Dg_bound2}^{1-u}$
		yields 
		\begin{align}
			\nonumber
			|\widetilde f(x,y)-\widetilde f(x',y')|^p
			&\le \1_{\{M_x>x_+,M_y>x_-\}}c_4^p
			\frac{|x-x'|^{pu}/(m_x\vee x_+)^{pu}+|y-y'|^{pu}/(m_y\vee x_-)^{pu}}
			{(m_x\vee x_+)^{p(k_+-1)}(m_y\vee x_-)^{p(k_--1)}}\\
			\label{eq:Dg_bound5}
			&\le \1_{\{M_x>x_+,M_y>x_-\}}c_4^p
			\frac{|x-x'|^{pu}/m_x^{pu}+|y-y'|^{pu}/m_y^{pu}}
			{(m_x\vee x_+)^{p(k_+-1)}(m_y\vee x_-)^{p(k_--1)}}.
		\end{align}
		
		By~\eqref{eq:geom_comb} we have $m_x\vee x_+\ge m_x^r x_+^{1-r}$ and 
		$m_y\vee x_-\ge m_y^{r'} x_-^{1-r'}$ for any $r,r'\in[0,1]$. Since 
		$\alpha'<\alpha\le 1/[\rho\vee(1-\rho)]$, we choose 
		$r=\alpha'\rho/[p(k_+-1)]$ and $r'=\alpha'(1-\rho)/[p(k_--1)]$. Applying 
		these interpolating inequalities to~\eqref{eq:Dg_bound5} and combining them 
		with~\eqref{eq:p-indicator} gives 
		\[
		\begin{split}
			\big|\widetilde\Theta^f_n\big|^p
			&\le \frac{c_4^pp^\phi_{k_+,k_-}(Z_{n+1},n+1)^p}
			{x_+^{p(k_+-1)-\alpha'\rho}x_-^{p(k_--1)-\alpha'(1-\rho)}}
			\bigg(\frac{|\Delta_{+,n+1}|^{pu}}
			{m_{+,n}^{\alpha'\rho+pu}m_{-,n}^{\alpha'(1-\rho)}}
			+\frac{|\Delta_{-,n+1}|^{pu}}
			{m_{+,n}^{\alpha'\rho}m_{-,n}^{\alpha'(1-\rho)+pu}}\bigg),\\
			&\le \frac{c_4^pp^\phi_{k_+,k_-}(Z_{n+1},n+1)^p}
			{\kappa^{\alpha'+pu}	
				x_+^{p(k_+-1)-\alpha'\rho}x_-^{p(k_--1)-\alpha'(1-\rho)}}
			\bigg(\frac{|\Delta_{+,n+1}|^{pu}}
			{X_{+,n}^{\alpha'\rho+pu}X_{-,n}^{\alpha'(1-\rho)}}
			+\frac{|\Delta_{-,n+1}|^{pu}}
			{X_{+,n}^{\alpha'\rho}X_{-,n}^{\alpha'(1-\rho)+pu}}\bigg),
		\end{split}
		\]
		where we used the restriction that 
		$m_{\pm,n}\ge \kappa X_{\pm,n}$. 
		Moreover, as $u\in (0,(\alpha-\alpha')(\rho\wedge(1-\rho))/p)$, we have $\alpha'\rho+pu<\alpha\rho$ and 
		$\alpha'(1-\rho)+pu<\alpha(1-\rho)$. Hence, applying 
		Lemma~\ref{lem:inv_bound_X+X-Z} gives~\eqref{eq:int_density-decay3}. 
		
		The proof of~\eqref{eq:int_density-decay4} is analogous to that 
		of~\eqref{eq:int_density-decay3}. Indeed, using~\eqref{eq:geom_comb} 
		and the inequality $|\widetilde f(\chi_n)|
		\le \|h\|_\infty (X_{+,n}\vee x_+)^{1-k_+}(X_{-,n}\vee x_-)^{1-k_-}$ we obtain 
		\[
		|\widetilde f(\chi_n)|^p
		\le \|h\|_\infty^p 
		x_+^{p(1-k_+) + \alpha'\rho}x_-^{p(1-k_-) + \alpha'(1-\rho)}
		X_{+,n}^{-\alpha'\rho}X_{-,n}^{-\alpha'(1-\rho)}. 
		\]
		The inequality~\eqref{eq:int_density-decay4} then follows from 
		Lemma~\ref{lem:inv_bound_X+X-Z}, completing the proof of~(b). 
		
		\textbf{Part (c).} We will only prove the bound for the first argument of
		the minimum in the right hand side of ~\eqref{eq:int_density-decay5} 
		and~\eqref{eq:int_density-decay6}; the other case is analogous. 
		We proceed as in (b): using~\eqref{eq:geom_comb}, \eqref{eq:p-indicator}, 
		\eqref{eq:Dg_bound5} but instead of the interpolating inequality using $ r $, we use the bound 
		$\1_{\{M_{+,n}>x_+\}}\le M_{+,n}^vx_+^{-v}$, for any $v=\alpha'-pu,\alpha'\geq 0$), we obtain 
		\[
		\begin{split}
			\big|\widetilde\Theta^f_n\big|^p
			&\le \frac{c_4^pp^\phi_{k_+,k_-}(Z_{n+1},n+1)^p}
			{x_+^{p(k_+-1)+\alpha'}x_-^{p(k_--1)-\alpha'(1-\rho)}}
			\bigg(\frac{|\Delta_{+,n+1}|^{pu}M_{+,n}^{\alpha'-pu}}
			{m_{-,n}^{\alpha'(1-\rho)}}
			+\frac{|\Delta_{-,n+1}|^{pu}M_{+,n}^{\alpha'}}
			{m_{-,n}^{\alpha'(1-\rho)+pu}}\bigg)\\
			&\le \frac{c_4^pp^\phi_{k_+,k_-}(Z_{n+1},n+1)^p}
			{\kappa^{\alpha'(2-\rho)+pu}
				x_+^{p(k_+-1)+\alpha'}x_-^{p(k_--1)-\alpha'(1-\rho)}}
			\bigg(\frac{|\Delta_{+,n+1}|^{pu}X_{+,n+1}^{\alpha'-pu}}
			{X_{-,n}^{\alpha'(1-\rho)}}
			+\frac{|\Delta_{-,n+1}|^{pu}X_{+,n+1}^{\alpha'}}
			{X_{-,n}^{\alpha'(1-\rho)+pu}}\bigg),
		\end{split}
		\]
		where we used the fact that $M_{\pm,n}\le \kappa^{-1}X_{\pm,n+1}$ and 
		$m_{\pm,n}\ge \kappa X_{\pm,n}$. 
		An application of Lemma~\ref{lem:inv_bound_X+X-Z} then 
		gives~\eqref{eq:int_density-decay5}. 
		
		Using the inequality $|\widetilde f(\chi_n)|\le \1_{\{X_{+,n}>x_+,X_{-,n}>x_-\}}
		\|h\|_\infty (X_{+,n}\vee x_+)^{1-k_+}(X_{-,n}\vee x_-)^{1-k_-}$, 
		\eqref{eq:geom_comb} and the bound 
		$\1_{\{X_{+,n}>x_+\}}\le X_{+,n}^{\alpha'} x_+^{-\alpha'}$, we obtain 
		\[
		\big|\Theta^f_{n,m}\big|^p
		\le \frac{\|h\|_\infty^p p^\phi_{k_+,k_-}(Z_m,m)^p}
		{x_+^{p(k_+-1)+\alpha'}x_-^{p(k_--1)-\alpha'(1-\rho)}}
		\frac{X_{+,n}^{\alpha'}}
		{X_{-,n}^{\alpha'(1-\rho)}}.
		\] 
		which yields~\eqref{eq:int_density-decay6} by 
		Lemma~\ref{lem:inv_bound_X+X-Z}, completing the proof of the proposition. 
	\end{proof}
	\begin{rem}Analyzing the  above proof, we can see the interpolation method at work here. In fact, the estimates of Proposition~\ref{prop:Theta_bound}, 
		one may say that all polynomial terms in $n$ arise due to the polynomial 
		growth of $H_{n,m}^{\pm,k_{\pm}}$ (see~\eqref{eq:H(Phi)_bound} 
		in Lemma~\ref{lem:simplify_HPhi}), through the term $ p^\phi_{k_+,k_-}(Z_m,m)^p $ which appears in the upper bounds. On the other hand, the geometrically decreasing 
		terms are produced by the exponentially fast decay of the differences 
		$\Delta_{\pm,n}:=X_{\pm,n}-X_{\pm,n-1}$ in $\widetilde\Theta^f_n$. We stress that another achievement of the interpolation method is that the moment estimates of 
		Proposition~\ref{prop:Theta_bound} hold for any $p\ge 1$.
		\end{rem}
	
	\section{Proof of Proposition~\ref{prop:Theta_bound}, Part II: The moment bounds }
	\label{sec:mom}
	
	
	
	In this section, we state the explicit moment estimates for the quantities that 
	appear in the weights of the multiple Ibpf of Theorem~\ref{thm:series}. 
	These bounds were the last step in the proof of Proposition~\ref{prop:Theta_bound} 
	above. The proofs of these lemmas in this section, are independent of everything that 
	have preceded them. In order to obtain near optimal bounds in 
	Theorem~\ref{thm:density_bound}, we first study the growth of the moments 
	of $X_{\pm,n}^p$ for $p$ arbitrarily close to $\alpha$ in 
	Lemmas~\ref{lem:bound_X+X-Z}, \ref{lem:inv-mom} 
	and~\ref{lem:inv_bound_X+X-Z}. Since the 
	$\alpha$-moment of the stable law does not exist, the bounds in these lemmas cannot be 
	obtained e.g. via H\"older's inequality. Their proofs consist of a direct, but
	very careful, analysis of the corresponding expectations. 
	
	There are two types of bounds according to whether they involve positive 
	or negative moments of $ X_{\pm,n} $. They correspond to the behavior at infinity or 
	at zero in the estimates that we obtain in Theorem~\ref{thm:density_bound} as can be deduced from the proof of Proposition~\ref{prop:Theta_bound}. 
	Throughout the present section we use the notation from 
	Subsection~\ref{sec:3.1}. 
	In particular, recall the definition of $ Z_m $ in \eqref{def:Z}
	and Assumption~(\nameref{asm:seq_decay}):
	$\kappa^\alpha\in [\rho\vee(1-\rho),1)$. 
	Explicit constants in the results in this section can be recovered 
	from the proofs. 
	
		We begin by recalling the Mellin transform of a 
	stable random variable (see~\cite[Thm~2.6.3]{MR854867}) 
	\begin{equation*}
		\E[S_1^p\1_{\{S_1>0\}}]
		= \rho\frac{\Gamma(1+p)\Gamma(1-p/\alpha)}
		{\Gamma(1+p\rho)\Gamma(1-p\rho)},\qquad p\in(-1,\alpha).
	\end{equation*}
	When $\alpha\ne 1$, by the independence $E_i\indep G_i$ we deduce that, 
	for any $p\in[0,\alpha)$,
	\begin{equation}
		\label{eq:Mellin_G}
		\E[G_i^p\1_{\{G_i>0\}}]
		=\frac{\E[S_i^p\1_{\{S_i>0\}}]}{\E\big[E_i^{p(1-1/\alpha)}\big]}
		=\frac{\rho\Gamma(1+p)\Gamma(1-p/\alpha)}
		{\Gamma(1+p\rho)\Gamma(1-p\rho)\Gamma(p(1-1/\alpha)+1)}.
	\end{equation}
	Finally, we recall that $\E[E_1^p]=\Gamma(1+p)$ is finite if and only if 
	$p>-1$.

		\subsection{Positive moments}
	\begin{lem}
		\label{lem:bound_X+X-Z}
		Let $p,q,s\ge 0$ satisfy $q\le p<\alpha$. Then, there exists a constant $C>0$ 
		such that for any $m\ge n$ and $T>0$ we have 
		\begin{align}
			\label{eq:13.1}
			\E\big[X_{\pm,n}^{p-q}X_{\mp,n}^p |\Delta_{\pm,n}|^q Z_m^s\big]
			\le CT^{2\frac{p}{\alpha}}
			\Big(\big(1+\tfrac{q}{\alpha}\big)^{-n}
			+\kappa^{qn}\Big)m^{[p-1]^+ + [p-q-1]^+ +s}.
		\end{align}
	\end{lem}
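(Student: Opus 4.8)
The plan is: reduce to $T=1$ by scaling, split off the geometrically small increment, and estimate the remaining product of moments by a conditioning that decouples the positive and negative halves of $X$ --- a decoupling forced on us because neither half may be raised to a power $\geq\alpha$.

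\emph{Scaling and isolation of the increment.} As $\ell_i$ and $a_n$ are homogeneous in $T$ of degrees $1$ and $1/\alpha$, while $S_i,\eta_\pm,E_i,Z_m$ do not depend on $T$, one has $X_{\pm,n}=T^{1/\alpha}X_{\pm,n}|_{T=1}$ and $\Delta_{\pm,n}=T^{1/\alpha}\Delta_{\pm,n}|_{T=1}$ almost surely, so the left-hand side of \eqref{eq:13.1} is $T^{2p/\alpha}$ times its value at $T=1$; we may thus take $T=1$, whence $a_n=\kappa^n$ and $\E[\ell_i^r]=(1+r)^{-i}$. For $n\geq2$ write $\Delta_{\pm,n}=Y_n-W_n$ with $Y_n:=\ell_n^{1/\alpha}[S_n]^\pm\geq0$ and $W_n:=(a_{n-1}-a_n)\eta_\pm^{1-1/\alpha}\geq0$; since $W_n\leq a_{n-1}\eta_\pm^{1-1/\alpha}\leq X_{\pm,n-1}$, we get $0\leq X_{\pm,n}=(X_{\pm,n-1}-W_n)+Y_n\leq X_{\pm,n-1}+Y_n$ and $|\Delta_{\pm,n}|^q\leq Y_n^q+W_n^q$. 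By \eqref{eq:8.1}, $X_{\pm,n}^{p-q}|\Delta_{\pm,n}|^q$ is bounded by a constant times $X_{\pm,n-1}^{p-q}Y_n^q+X_{\pm,n-1}^{p-q}W_n^q+Y_n^p+Y_n^{p-q}W_n^q$. Moreover, on $\{Y_n>0\}=\{[S_n]^\pm>0\}$ the $n$-th term $\ell_n^{1/\alpha}[S_n]^\mp$ of $X_{\mp,n}$ vanishes, so there $X_{\mp,n}\leq X_{\mp,n-1}$; in the remaining summand (the one with $W_n$ but no $Y_n$) we bound $X_{\mp,n}\leq X_{\mp,n-1}+\ell_n^{1/\alpha}[S_n]^\mp$ and expand its $p$-th power once more. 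Multiplying by $X_{\mp,n}^pZ_m^s$, this reduces \eqref{eq:13.1} to finitely many estimates of the shape
\[
\E\big[X_{\pm,n-1}^{a}X_{\mp,n-1}^{b}\,R\,Z_m^s\big]
\leq C\big((1+\tfrac q\alpha)^{-n}+\kappa^{qn}\big)\,m^{[p-1]^+ + [p-q-1]^+ + s},
\]
with $a\in\{0,p-q,p\}$, $b\in\{0,p\}$ (so $a,b<\alpha$), and $R$ a product of powers of $\ell_n^{1/\alpha}[S_n]^\pm$, $\ell_n^{1/\alpha}[S_n]^\mp$ and $\eta_\pm^{1-1/\alpha}$ carrying either a factor $\ell_n^{c/\alpha}$ with $c\geq q$, or a factor $(a_{n-1}-a_n)^q$, or both. (The case $n=1$ is separate and immediate: $\Delta_{\pm,1}=X_{\pm,1}$ and $X_{\pm,1}$ is a sum of only two terms, so the argument below applies with $n=1$ and gives $\E[X_{\pm,1}^pX_{\mp,1}^pZ_m^s]\leq Cm^s$.)

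\emph{Decoupling and moments.} Fix one such term and condition on $\mathcal F:=\sigma(U_j:j)\vee\sigma(\operatorname{sgn}(G_j):j)$. Given $\mathcal F$: (i) all $\ell_i$ are deterministic; (ii) $X_{+,n-1}$ and $X_{-,n-1}$ are conditionally independent, being built respectively from the disjoint families $\{E_i,|G_i|:i<n,\ \operatorname{sgn}(G_i)=\pm\}$ and from $\eta_+$, resp. $\eta_-$; (iii) writing $Z_{n-1}:=\eta_++\eta_-+\sum_{i<n}E_i$ and splitting $Z_m=Z_{n-1}+E_n+\sum_{i=n+1}^mE_i$, the last block is independent of all the rest and has $s$-th moment $\leq Cm^s$, the term $E_n$ is absorbed into $R$ (it enters only through $[S_n]^\pm,[S_n]^\mp$, and the relevant powers of $E_n$ are integrable since $a,b,c<\alpha$), and $Z_{n-1}^s\leq C(A_+^s+A_-^s)$ with $A_\pm:=\eta_\pm+\sum_{i<n,\,\operatorname{sgn}(G_i)=\pm}E_i$ depending only on the $X_{\pm,n-1}$-randomness. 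Using \eqref{eq:8.1} on $X_{\pm,n-1}^a$, $X_{\mp,n-1}^b$ and $A_\pm^s$, integrating out the exponentials and the $|G_i|$ (whose conditional moments of order $<\alpha$ are finite, by \eqref{eq:Mellin_G}), and invoking the conditional independence in (ii), one obtains
\[
\E\big[X_{\pm,n-1}^{a}X_{\mp,n-1}^{b}A_\pm^s\mid\mathcal F\big]
\leq Cn^{[a-1]^+ + [b-1]^+ + s}\Big(\sum_{i<n}\ell_i^{a/\alpha}+a_{n-1}^a\Big)\Big(\sum_{i<n}\ell_i^{b/\alpha}+a_{n-1}^b\Big),
\]
and similarly with $A_\mp$; the factor $R$ and, in the terms containing $W_n$, the $\eta_\pm$ shared between $W_n$, $X_{\pm,n-1}$ and $A_\pm$, are handled the same way and cost only a bounded constant. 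Since $\E[R\mid\mathcal F]\leq C\ell_n^{c/\alpha}$ or $C(a_{n-1}-a_n)^q$, it remains to integrate over $\mathcal F$: when $a>0$ (resp. $b>0$) the geometric series $\sum_{i<n}\E[\ell_i^{a/\alpha}]=\sum_{i<n}(1+a/\alpha)^{-i}$ is bounded uniformly in $n$, and the bivariate stick-breaking moment identities give $\sum_{i<n}\E[\ell_n^{c/\alpha}\ell_i^{a/\alpha}]\leq C(1+c/\alpha)^{-n}$ (and likewise with an extra $\ell_j^{b/\alpha}$-layer; the extra layers only sharpen the rate, as $c\geq q$ and $p\geq q$), whence the product of $R$ with the $\ell$-sums has expectation $\leq C((1+q/\alpha)^{-n}+\kappa^{qn})$, using $a_{n-1}=\kappa^{n-1}$. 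Summing the finitely many terms and using $n\leq m$ gives \eqref{eq:13.1}. The degenerate cases $q=0$ (then $R\equiv1$, no decay being asserted) and $a=0$, i.e. $p=q$ (then the bracket $X_{\pm,n-1}^{a}$ equals $1$ and must \emph{not} be expanded into $\sum_{i<n}\ell_i^0+1=n$), are obtained by simply omitting the corresponding brackets.

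\emph{Main difficulty.} The crux is the moment estimate of the second step. Because the $\alpha$-moment of a stable law is infinite, $X_{+,n-1}$ cannot be separated from $X_{-,n-1}$ by H\"older (that would create a power $2p-q$, possibly $\geq\alpha$); one must instead use the conditional independence of the two halves under $\mathcal F$, together with the fact that the heavy tails live only in the $|G_i|$-factors, which are independent of the Gamma-type variables $A_\pm$, $E_n$ and $\sum_{i>n}E_i$. This is exactly what keeps every moment that is actually evaluated strictly below $\alpha$, while the polynomial factor $Z_m^s$ and the geometric decay carried by $Y_n$ and $W_n$ are tracked with the precise exponents needed for \eqref{eq:13.1}.
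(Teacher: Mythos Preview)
Your argument is correct and follows essentially the same strategy as the paper: reduce to $T=1$ by scaling, split $\Delta_{\pm,n}$ into the stick piece $\ell_n^{1/\alpha}[S_n]^\pm$ and the remainder $(a_{n-1}-a_n)\eta_\pm^{1-1/\alpha}$, expand all factors via \eqref{eq:8.1}, and collect the geometric decay from the stick-breaking moments while tracking the polynomial growth from $Z_m^s$. The one organizational difference is the choice of conditioning: the paper conditions on $\F_{-E}=\sigma(\ell_i,G_i;i\in\N)$ and uses the identity $[G_i]^+[G_i]^-=0$ to kill the diagonal terms in the double sum, whereas you condition on $\sigma(U_j)\vee\sigma(\operatorname{sgn}(G_j))$ and phrase the same cancellation as conditional independence of $X_{+,n-1}$ and $X_{-,n-1}$. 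Both lead to the same term-by-term factorisation and the same stick-breaking estimates (the paper's Lemma on $\E[\ell_i^{p}\ell_j^{q}\ell_n^{r}]$). Your framing makes the ``no H\"older'' obstruction and its resolution more transparent; the paper's version keeps slightly tighter control of the constants by working with the full $G_i$'s. A minor slip: you list $a\in\{0,p-q,p\}$, but your own decomposition only produces $a\in\{0,p-q\}$, which is what is needed for the exponent $[p-1]^++[p-q-1]^++s$ to come out right.
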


	Before giving the proof of the lemma, we give some remarks:
	\begin{rem} 	{\normalfont(i)} Note that the exponent in $E^{1-1/\alpha}$ appearing in $ X_{\pm,n} $
		changes sign when $\alpha\in (0,1)$ and $\alpha\in (1,2)$. For this reason, 
		most of the proofs for estimating the above bounds of moments will have to be 
		done in three separate cases: $\alpha\in(0,1)$, $\alpha=1$ and 
		$\alpha\in(1,2)$. This makes the proofs slight long because some inequalities change depending on the above cases. \\
		{\normalfont(ii)} Note that due to the scaling property of the stick breaking process and $ a_n $ the factor of $ T^{2\frac{p}{\alpha}} $ is easily obtained. In fact, we will assume, without loss of generality, in all proofs in this section that $ T=1 $. In the Lemma statements, we have left the dependence on $ T $ and in some major points of the proof too. In a first reading, one may assume always that $ T=1 $.\\
		{\normalfont(iii)} We will consider in all proofs only one combination of $ \pm $ signs. The other case follows mutatis mutandis.
		\end{rem}


	\begin{proof}[Proof of Lemma~\ref{lem:bound_X+X-Z}]
	We first make a number of reductions that simplify the proof. 
	We will assume $p,q>0$. The remaining cases (when at least one of the two 
	parameters is zero) follow similarly by ignoring the corresponding terms 
	in the calculations. 
	
	Let $c=2^{[p-1]^++[p-q-1]^++[q-1]^+}$ and use~\eqref{eq:8.1} to obtain 
	\begin{align}
		\nonumber
		X_{+,n}^{p-q}X_{-,n}^p|\Delta_{+,n}|^q
		&\le c\bigg(\bigg(\sum_{i=1}^n \ell_i^{1/\alpha}[S_i]^+\bigg)^{p-q}
		+ a_{n}^{p-q}\eta_+^{p-q}\bigg)
		\bigg(\bigg(\sum_{i=1}^n \ell_i^{1/\alpha}[S_i]^-\bigg)^{p}
		+ a_{n}^{p}\eta_-^{p}\bigg)\\
		&\qquad\times
		\big((\ell_n^{1/\alpha}[S_n]^+)^q + a_{n-1}^q\eta_+^q\big).
		\label{eq:12.1}
	\end{align}
	
	Our goal is now to provide an upper bound for the expectation of the right 
	hand side of the above inequality multiplied by $Z_n^s$. This leads to eight 
	terms which must be treated individually to show that their expectations 
	decay exponentially at least as a polynomial (in $n$) multiple of 
	$a_{n-1}^q$ or $\E[\ell_n^{q/\alpha}]=(1+q/\alpha)^{-n}$. 
	We treat the hardest term in~\eqref{eq:12.1}; which contains the product of sums of $ [S_i]^{\pm} $. The other terms are 
	easier to treat as we remark at the end of the proof. Therefore we will consider, for $r\in\{0,q\}$ 
	
	\begin{align}
		\label{eq:13.0}
		A
		:=\E\bigg[\bigg(\sum_{i=1}^n 
		\ell_i^{1/\alpha}[G_i]^+c_i\bigg)^{p-q}
		\bigg(\sum_{j=1}^n \ell_j^{1/\alpha}[G_j]^-c_j\bigg)^{p}
		(\ell_n^{1/\alpha}[G_n]^+c_n)^r
		\bigg| \F_{-E}\bigg]Z_m^s,
		\qquad r\in\{0,q\},
	\end{align}
	where $c_i=E_i^{1-1/\alpha}$ and $\F_{-E}=\sigma(\ell_i,G_i;i\in\N)$. 
	We estimate~\eqref{eq:13.0} in steps:
	
	I) In this step we will separate the expectation in \eqref{eq:13.0} using the independent components $ G $, $ E $ and $ \ell $. Let $r\in\{0,q\}$ and $p':=[p-1]^++[p-q-1]^+$ and consider any positive 
	constants $(c_i)_{i\in\N}$. Applying~\eqref{eq:8.1} yields 
	\begin{align}
		\label{eq:sum_pqr}
		&\E\bigg[\bigg(\sum_{i=1}^n \ell_i^{1/\alpha}[G_i]^+c_i\bigg)^{p-q}
		\bigg(\sum_{j=1}^n \ell_j^{1/\alpha}[G_j]^-c_j\bigg)^{p}
		(\ell_n^{1/\alpha}[G_n]^+c_n)^r
		\bigg|\F_{-E}
		\bigg]\\
		\nonumber
		&\qquad
		\le n^{p'}\E\bigg[\sum_{i=1}^n\sum_{j=1}^n 
		\ell_i^{\frac{p-q}{\alpha}}\ell_n^{\frac{r}{\alpha}}
		\ell_j^{\frac{p}{\alpha}}
		([G_i]^+c_i)^{p-q}([G_j]^-c_j)^{p}([G_n]^+c_n)^r
		\bigg|\F_{-E}
		\bigg]\\
		\nonumber
		&\qquad
		\leq 2n^{p'}\sum_{j=1}^{n-1}\sum_{i=j+1}^n 
		\E\Big[\ell_i^{\frac{p-q}{\alpha}}\ell_n^{\frac{r}{\alpha}}
		\ell_j^{\frac{p}{\alpha}}\Big]
		\E\big[([G_i]^+)^{p-q}([G_n]^+)^r\big]
		\E\big[([G_j]^-)^{p}\big]
		c_i^{p-q}c_j^{p}c_n^r,
	\end{align} 
Note that 
the cases $ j\in\{i,n\} $ do not appear because $[x]^+[x]^-=0$. The above expression is a linear combination of monomials in $c_i$, 
	$c_n$ and $c_j$. We will analyze and bound the coefficients. 
	
	The last two expectations within the sum on the right side of the above 
	inequality can be computed using~\eqref{eq:Mellin_G} and the value of 
	their product only depends on whether $i=n$ or not. In fact, for $r\in\{0,q\}$
	\[
	\E\big[([G_i]^+)^{p-q}([G_n]^+)^r\big]
	\le 
	\max\{\E[([G_1]^+)^p],\E[([G_1]^+)^{p-q}]\E[([G_1]^+)^{q}],\E[([G_1]^+)^{p-q}]\},
	\] 
	which can be bounded by an explicit constant using~\eqref{eq:Mellin_G}. 
	
	II) Now, we obtain an important part of the bound in \eqref{eq:13.1} which is due to the stick breaking process. That is, an application of Lemma~\ref{lem:ell-mom}(b) yields the existence of 
	some $c'>0$ independent of $j$, $i$ and $n$ such that  for 
	$\theta=\tfrac{\alpha+p+r}{\alpha+2p+r}<1$, we have
	\begin{align*}
	\E[\ell_i^{(p-q)/\alpha}\ell_j^{p/\alpha}\ell_n^{r/\alpha}]
	\le c'\theta^{i+j}(1+r/\alpha)^{-n}.
	\end{align*}

	III) Now, we estimate the moments of the remaining random variables $ E_i $ which appear in the coefficients $ c_i $. By the previous steps and~\eqref{eq:sum_pqr}, we deduce that for 
	some constant $c''>0$ independent of $j$, $i$ and $n$, we have 
	\begin{align*}
		\E[A]
		&\le c'' n^{p'}\big(1+\tfrac{r}{\alpha}\big)^{-n}
		\sum_{j=1}^{n-1}\sum_{i=j+1}^n
		\theta^{i+j}\E\bigg[
		E_i^{(1-1/\alpha)(p-q)}
		E_j^{(1-1/\alpha)p}
		E_n^{(1-1/\alpha)r}Z_m^s\bigg].
	\end{align*}
	
	Next, we will show that the expectation on the right side in the above inequality 
	is bounded by a multiple of $ m^s $. As the term $\theta^{i+j}$ vanishes 
	geometrically fast, we would then obtain 
	\begin{align}
		\label{eq:15.1}
		\E[A]
		&\le c''' n^{p'}m^s\big(1+\tfrac{r}{\alpha}\big)^{-n}.
	\end{align}
	
	To prove~\eqref{eq:15.1}, observe that since $ Z_n $ in \eqref{def:Z} is a Gamma distributed 
	random variable then
	\[
	\E\big[Z_n^s\big]
	= \int_0^\infty\frac{x^s x^{n+1}}{(n+1)!}e^{-x}dx
	=\frac{\Gamma(n+s+2)}{(n+1)!}.
	\]
	Using the two-sided bounds in Stirling's formula we see that this expression 
	is bounded by a multiple of $m^s$. In fact, a similar upper bound holds for 
	$\E[Z_m^s E_i^{r_1}E_j^{r_2}E_n^{r_3}]$ with $r_1=(1-1/\alpha)(p-q)$, $r_2=(1-1/\alpha)p$ and $r_3=(1-1/\alpha)r$. Note that $r_1,r_2,r_3>-1$ in the case $ i<n $ and 
	$r_1+r_3>-1$, $ r_2>-1 $ in the case that $ i=n $. Furthermore, even in the case $ \alpha\in (0,1) $, 
	our hypotheses on $p$ and $q$ ensure that $r_1$, $r_2$ and $r_3$ satisfy these conditions. Indeed, for instance, when the 
	indices $n,i,j$ are different and $n\ge 4$, we can decompose $Z_m$ into 
	4 terms according to the index of $E$ within $Z_m$ which may equal one 
	of the indices $n,i,j$ so that, by~\eqref{eq:8.1}, 
	\begin{equation}
		\label{eq:Zn-s-growth}
		\begin{split}
			\E\big[Z_n^s E_i^{r_1}E_n^{r_2}E_j^{r_3}\big]
			&= 4^{[s-1]^+}\Big(\E[E_i^{s+r_1}]\E[E_n^{r_2}]\E[E_j^{r_3}]
			+\E[E_i^{r_1}]\E[E_n^{s+r_2}]\E[E_j^{r_3}]\\
			&\qquad+\E[E_i^{r_1}]\E[E_n^{r_2}]\E[E_j^{s+r_3}]
			+\E[E_i^{r_1}]\E[E_n^{r_2}]\E[E_j^{r_3}]\E[Z_{m-3}^s]\Big).
		\end{split}
	\end{equation}
	The quantity in~\eqref{eq:Zn-s-growth} grows as a constant multiple of 
	$m^s$ (through the $s$-moment of $Z_{m-3}$).
		Thus, we can deduce that~\eqref{eq:15.1}.

	Finally, to bound other terms in~\eqref{eq:12.1}, it is just a repetition of the above arguments but slightly easier because:
	 \begin{enumerate}[leftmargin=.75cm]
		\item the variables $\eta_+$ and $\eta_-$ are independent of the sequence 
		$(\ell_i,S_i)_{i\in\N}$.
		\item Hence, when taking expectations, the variables $\eta_+$ and $\eta_-$ 
		will factorise by independence. These variables are multiplied by powers of 
		$a_{n}=\kappa^n$ and satisfy $\E[\eta_\pm^r]=\Gamma(1+r)$ 
		for $r>-1$ so their estimation is easier. 
		\item The final bound also uses the inequality 
		$a_n\le a_{n-1}$, a consequence of Assumption~(\nameref{asm:seq_decay}). 
	\end{enumerate} 
Putting the above arguments together completes the proof of 
Lemma~\ref{lem:bound_X+X-Z}, since all eight terms decay as fast as 
$a_n^{q}$ or $(1+q/\alpha)^{-n}$ and $ n\leq m $. 
	\end{proof}
	
	\subsection{Negative and mixed moments: Proofs of Lemmas \ref{lem:inv-mom} and \ref{lem:inv_bound_X+X-Z} }
	The ideas of the proof of Lemma \ref{lem:bound_X+X-Z} can be used again for negative moments but an additional idea is required in order to use similar techniques. 
	This is provided by the following equality:
	\begin{align}
		\label{eq:inv}
	\lambda^{-p}=\Gamma(p)^{-1}\int_0^\infty x^{p-1}e^{-\lambda x}dx,\text{ for }p>0, 
	\end{align}
	which expresses the negative power $\lambda^{-p}$ using an 
	exponential expression which in our case leads to the study of the Laplace transform of the respective random variable whose inverse moment we want to estimate. The use of this technique leads to the following results.
	\begin{lem}
		\label{lem:inv-mom}
		Recall that we always assume that $\alpha\ne 1$. Below, we assume that $r\ge 0$ and $u,v,w\ge 0$ be arbitrary. \\
		{\normalfont(a)} 
		Fix any $p\in(0,\alpha\rho)$, $q\in[0,\alpha(1-\rho))$. 
		There exists a positive constant $C$ such that for any 
		$j,n\in\N$ and $T>0$, the following bound holds: 
		\begin{align}
			\label{eq:neg_mom_(a)}
			\E\bigg[\frac{\ell_{n+1}^r E_j^u \eta_+^v \eta_-^w}{X_{+,n}^pX_{-,n}^q}\bigg]
			&\le CT^{r-\frac{p+q}{\alpha}}(1+r)^{-n}.
		\end{align}
		{\normalfont(b)}
		Fix any $p\in(0,\alpha\rho)$, $q\in(0,\alpha)$.
		There exists a positive constant $ C $ such that for any 
		$j,n\in\N$ and $T>0$, the following bound holds: 
		\begin{align*}
			\E\bigg[\frac{X_{-,n}^q\ell_{n+1}^r E_j^u \eta_+^v \eta_-^w}{X_{+,n}^p}\bigg]
			&\le CT^{r+\frac{q-p}{\alpha}}(1+r)^{-n}.
		\end{align*}
	\end{lem}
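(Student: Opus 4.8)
The plan is to mirror the proof of Lemma~\ref{lem:bound_X+X-Z}, the one genuinely new ingredient being the identity~\eqref{eq:inv}, which turns each negative power appearing on the left-hand sides into a Laplace transform. As in Lemma~\ref{lem:bound_X+X-Z}, the scaling of the stick-breaking process and of $a_n=T^{1/\alpha}\kappa^n$ lets us assume $T=1$, the stated powers of $T$ being recovered at the end from the fact that $\ell_{n+1}$, $X_{\pm,n}$ and $X_{\mp,n}$ scale like $T$, $T^{1/\alpha}$ and $T^{1/\alpha}$; we also only treat $\alpha\ne1$ and one choice of signs. For part~(a) I would apply~\eqref{eq:inv} to both $X_{+,n}^{-p}$ and $X_{-,n}^{-q}$ and use Tonelli's theorem to write
\[
\E\bigg[\frac{\ell_{n+1}^r E_j^u\eta_+^v\eta_-^w}{X_{+,n}^pX_{-,n}^q}\bigg]
=\frac{1}{\Gamma(p)\Gamma(q)}\int_0^\infty\!\!\int_0^\infty x^{p-1}y^{q-1}\,
\E\big[\ell_{n+1}^r E_j^u\eta_+^v\eta_-^w\,e^{-xX_{+,n}-yX_{-,n}}\big]\,dx\,dy .
\]
Since $\{[G_i]^\pm>0\}=\{\pm S_i>0\}$, $[S_i]^+[S_i]^-=0$, and (recall~\eqref{eq:xpm}) $X_{\pm,n}=\sum_{i=1}^n\ell_i^{1/\alpha}E_i^{1-1/\alpha}[G_i]^\pm+a_n\eta_\pm^{1-1/\alpha}$, while the families $(\ell_i)_i$, $(E_i)_i$, $(G_i)_i$ and $(\eta_+,\eta_-)$ are mutually independent, conditioning on $\sigma\big((\ell_i)_i,(G_i)_i\big)$ makes the exponential factorise over coordinates: the inner expectation becomes $\ell_{n+1}^r$ times $\E[\eta_+^v e^{-a_nx\eta_+^{1-1/\alpha}}]\,\E[\eta_-^w e^{-a_ny\eta_-^{1-1/\alpha}}]$, times the conditional Laplace transform of the $E_j$-factor (only when $j\le n$), times $\prod_{i\le n,\,i\ne j}\E\big[e^{-E_1^{1-1/\alpha}\ell_i^{1/\alpha}(x[G_i]^++y[G_i]^-)}\,\big|\,G_i\big]$. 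Thus the whole problem reduces to one-dimensional Laplace transforms of powers of exponential variables, combined — exactly as in Lemma~\ref{lem:bound_X+X-Z} — with joint moments of the stick-breaking process.

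Two families of elementary estimates feed this. First, for $a\ge0$ and $\beta:=1-1/\alpha$: when $\alpha>1$ (so $\beta\in(0,1)$), the substitution $s=\lambda t^\beta$ in $\E[E_1^a e^{-\lambda E_1^\beta}]=\int_0^\infty t^a e^{-\lambda t^\beta}e^{-t}dt$ gives $\E[E_1^a e^{-\lambda E_1^\beta}]\le C_a\big(1\wedge\lambda^{-(1+a)/\beta}\big)$; when $\alpha<1$ (so $\beta<0$) the same integral decays super-polynomially, $\E[E_1^a e^{-\lambda E_1^\beta}]\le C_a e^{-c\lambda^{\alpha}}$. These are the bounds on moment generating functions of powers of exponentials collected in the Appendix. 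Secondly, since the stable density $f_{S_1}$ is bounded, $\E[e^{-\lambda[S_1]^+}]=(1-\rho)+\int_0^\infty e^{-\lambda z}f_{S_1}(z)dz\le 1-c\,(1\wedge\lambda)$ for a suitable $c\in(0,\rho]$, and symmetrically for $[S_1]^-$; moreover $\E[([S_1]^\pm)^s]<\infty$ for $s<\alpha$ by the Mellin transform~\eqref{eq:Mellin_G}, and $\E[([S_1]^+)^{-s}\1_{\{S_1>0\}}]<\infty$ for $s<1$ by boundedness of $f_{S_1}$. For part~(b) I would apply~\eqref{eq:inv} only to $X_{+,n}^{-p}$, leave the positive power in place, expand $X_{-,n}^q\le c\big((\sum_{i=1}^n\ell_i^{1/\alpha}[S_i]^-)^q+a_n^q\eta_-^q\big)$ and distribute via~\eqref{eq:8.1} as in~\eqref{eq:12.1}; the resulting monomials in $[S_i]^-$ are controlled by~\eqref{eq:Mellin_G} (needing only $q<\alpha$), and when $G_i<0$ the variable $E_i$ does not enter $X_{+,n}$, hence is independent of the surviving exponential $e^{-xX_{+,n}}$.

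To assemble, split each of the $x$- and $y$-integrals at $1$. On $\{x,y\le1\}$ the exponential is $\le1$, all Laplace factors are bounded, $\int_0^1x^{p-1}dx,\int_0^1y^{q-1}dy<\infty$ since $p,q>0$, and bounding the product of conditional expectations by $1$ leaves $\E[\ell_{n+1}^r\times(\text{bounded})]\le C\,\E[\ell_{n+1}^r]=C'(1+r)^{-(n+1)}$, up to the joint stick-breaking moment estimates of Lemma~\ref{lem:ell-mom} used to decouple $\ell_{n+1}$ from the $\ell_i$'s inside the exponential (exactly as in Step~II of the proof of Lemma~\ref{lem:bound_X+X-Z}). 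On the complement, say $x>1$, the decay of the Laplace factors secures convergence: for $\alpha<1$ the single factor $\E[\eta_+^v e^{-a_nx\eta_+^\beta}]\le Ce^{-c(a_nx)^\alpha}$ already gives super-polynomial decay, while for $\alpha>1$ one uses $\prod_{i\le n}\E[e^{-x\ell_i^{1/\alpha}[S_i]^+}\mid G_i]\le\exp\big(-c\sum_{i\le n}\min\{x\ell_i^{1/\alpha},1\}\big)$ together with the $\eta$-factors. Assumption~(\nameref{asm:seq_decay}), $\kappa^\alpha\ge\rho\vee(1-\rho)$, is what keeps the contribution of the exceptional events on which all $[S_i]^\pm$ vanish (there $X_{\pm,n}=a_n\eta_\pm^{1-1/\alpha}=\kappa^n\eta_\pm^{1-1/\alpha}$) bounded uniformly in $n$: that contribution carries a factor $(1-\rho)^n\kappa^{-np}\le\big((1-\rho)\kappa^{-\alpha}\big)^n\le1$ since $p\le\alpha$ (and symmetrically $\rho^n\kappa^{-nq}\le1$). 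Collecting the two regions with the $\Gamma$-constants and the $T$-scaling gives~\eqref{eq:neg_mom_(a)}; part~(b) is identical once $X_{-,n}^q$ is expanded, with combined power $T^r\cdot T^{q/\alpha}\cdot T^{-p/\alpha}=T^{r+(q-p)/\alpha}$.

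The main obstacle is this uniform-in-$n$ control with the sharp exponents $\alpha\rho$, $\alpha(1-\rho)$: for each fixed $n$ the negative moment is in fact finite for the much larger range $p<\alpha/(\alpha-1)$ (just bound $X_{+,n}\ge a_n\eta_+^{1-1/\alpha}$), but that estimate blows up like $\kappa^{-np}$, and only the restriction $p<\alpha\rho$ together with Assumption~(\nameref{asm:seq_decay}) tames it. The exponent $\alpha\rho$ comes from the heuristically worst event — exactly one index $i_0\le n$ carrying a positive bit $[S_{i_0}]^+>0$ (probability $\asymp\rho(1-\rho)^{n-1}$) and the rest vanishing — on which $X_{+,n}\ge\ell_{i_0}^{1/\alpha}[S_{i_0}]^+$, so summing $\E[\ell_{n+1}^r\ell_{i_0}^{-p/\alpha}]\asymp(1+r)^{-n}\big((1+r)/(1+r-p/\alpha)\big)^{i_0}$ over $i_0\le n$ yields $\asymp(1+r)^{-n}(1+r-p/\alpha)^{-n}$, and the total stays $\le C(1+r)^{-n}$ precisely when $(1-\rho)(1+r)\le 1+r-p/\alpha$, i.e.\ $p\le\alpha\rho(1+r)$, in particular for all $p<\alpha\rho$. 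Making this balance rigorous — simultaneously tracking the number and positions of the vanishing bits, the sizes of the corresponding $\ell_i^{1/\alpha}$, and the residual term $a_n\eta_\pm^{1-1/\alpha}$ — inside the Laplace-transform bookkeeping above is the technical heart of the proof.
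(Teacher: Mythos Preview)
Your sketch has the right architecture: the integral representation~\eqref{eq:inv}, factorisation over independent coordinates, the split at $x=1$, the role of Assumption~(\nameref{asm:seq_decay}) in controlling the event $\{G_i\le0\ \forall i\}$, and the heuristic in your final paragraph pinpointing why $p<\alpha\rho$ is the threshold. Two points of comparison with the paper's proof are worth making.

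First, a difference in conditioning. You condition on $(\ell_i,G_i)$ and integrate out $E_i$; the paper does the opposite, conditioning on $\mathcal{G}=\sigma(\ell_k,E_k)$ and integrating out $G_k$ first (Lemma~\ref{lem:simple-estim}(b)). The paper's route gives each factor the exact shape $(1-\rho)+\rho\min\{1,b_\rho/(\ell_k^{1/\alpha}E_k^\zeta x)\}$, which then feeds cleanly into the product-to-sum inequality~\eqref{eq:19.2}:
\[
\prod_{k=1}^n\big((1-\rho)+\rho x_k\big)\le (1-\rho)^n+\sum_{k=1}^n\rho(1-\rho)^{k-1}x_k .
\]
Each summand integrates explicitly against $x^{p-1}$ via Lemma~\ref{lem:simple-estim}(c), and the resulting geometric series over $k$ (after the stick-breaking moments of Lemma~\ref{lem:ell-mom}(c)) converges precisely when $p<\alpha\rho(1+r)$. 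This is exactly your last-paragraph heuristic made rigorous, and the inequality~\eqref{eq:19.2} is the tool you were looking for.

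Second, a genuine gap. Your assembly step for $\alpha>1$ asserts $\prod_{i\le n}\E[e^{-x\ell_i^{1/\alpha}[S_i]^+}\mid G_i]\le\exp(-c\sum_i\min\{x\ell_i^{1/\alpha},1\})$, but this is not what your own estimates give: conditional on $G_i$, the $i$-th factor is $1$ when $G_i\le0$ and is $\E[e^{-\lambda_i E_i^\zeta}]\le C\min\{1,\lambda_i^{-1/\zeta}\}$ when $G_i>0$ --- neither is of the form $1-c\min\{\lambda_i,1\}$. Likewise your unconditional bound $\E[e^{-\lambda[S_1]^+}]\le 1-c(1\wedge\lambda)$ fails for $\alpha<1$ (the correct small-$\lambda$ behaviour is $1-c\lambda^\alpha$). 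These are fixable, but the $\exp(-c\sum)$ route does not lead anywhere clean; what works is to bound the product by its \emph{smallest} factor (over indices with $G_i>0$), which after averaging over the signs of $G_i$ is exactly what~\eqref{eq:19.2} encodes. Your conditioning choice can be made to work, but you will end up reproducing~\eqref{eq:19.2} in disguise; the paper's ordering (integrate $G$ first) makes the structure transparent.
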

	
	\begin{lem}
		\label{lem:inv_bound_X+X-Z}
		Let $p,q,r,s\ge 0$ satisfy $p\in[0,\alpha\rho)$, $q\in[0,\alpha(1-\rho))$ and 
		$r\in[0,\alpha)$. There exists a constant $C>0$ such that for any $m\ge n$ 
		and $T>0$ we have 
		\[
		\E\bigg[\frac{|\Delta_{\pm,n+1}|^r Z_{m}^s}
		{X_{+,n}^{p}X_{-,n}^q}\bigg]
		\le CT^{(2r-p-q)/\alpha}
		\Big(\big(1+\tfrac{r}{\alpha}\big)^{-n}
		+\kappa^{nr}\Big) m^{s'},
		\]
		where $s'=\1_{\{s>0\}}(s\vee 1)$. Similarly, for any $p\in[0,\alpha)$, 
		$r\in[0,p]$ and $q\in[0,\alpha(1-\rho))$, there is some $C>0$ such that 
		for all $m\ge n$ and 
		$T>0$ 
		\begin{align*}
			\E\bigg[\frac{|\Delta_{+,n+1}|^r X_{+,n+1}^{p-r} Z_{m}^s}
			{X_{-,n}^{q}}\bigg]
			\le CT^{(p-q)/\alpha}
			\Big(\big(1+\tfrac{r}{\alpha}\big)^{-n}
			+\kappa^{nr}\Big) m^{s'}.
		\end{align*}
	\end{lem}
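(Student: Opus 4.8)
The plan is to derive both inequalities from the single-scale negative-moment estimates of Lemma~\ref{lem:inv-mom} by expanding $\Delta_{\pm,n+1}$, $Z_m$ and (for the second bound) $X_{+,n+1}$ into their building blocks and then bounding term by term.

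First I would reduce to $T=1$: since $\ell_i=T\,\ell_i^{(1)}$, where $\ell^{(1)}$ is the stick-breaking process on $[0,1]$, and $a_n=T^{1/\alpha}\kappa^n$, the vector $\chi_n$ and the increments $\Delta_{\pm,n+1}$ equal $T^{1/\alpha}$ times their $T=1$ counterparts while $Z_m$ does not depend on $T$, so the stated $T$-power appears as a prefactor. Next, using $\Delta_{\pm,n+1}=\ell_{n+1}^{1/\alpha}E_{n+1}^{1-1/\alpha}[G_{n+1}]^\pm+(\kappa-1)a_n\eta_\pm^{1-1/\alpha}$ and $0<\kappa<1$, inequality~\eqref{eq:8.1} gives
\[
|\Delta_{\pm,n+1}|^r\le 2^{[r-1]^+}\Big(\ell_{n+1}^{r/\alpha}E_{n+1}^{r(1-1/\alpha)}([G_{n+1}]^\pm)^r+a_n^r\eta_\pm^{r(1-1/\alpha)}\Big),
\]
and similarly $Z_m^s\le (m+2)^{[s-1]^+}\big(\eta_+^s+\eta_-^s+\sum_{i=1}^m E_i^s\big)$ when $s>0$ (while $Z_m^0=1$); for the second inequality I would in addition use $X_{+,n+1}\le X_{+,n}+\ell_{n+1}^{1/\alpha}E_{n+1}^{1-1/\alpha}[G_{n+1}]^+$ together with~\eqref{eq:8.1} applied to $X_{+,n+1}^{p-r}$.

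Multiplying these out, each bound becomes a sum of $O(m^{[s-1]^+}\cdot m)=O(m^{s'})$ terms, each a product of: a power $\ell_{n+1}^{t/\alpha}$ with $t\in\{r,p-r,p\}$, or the deterministic factor $a_n^r$; powers of $E_{n+1}$ and of $[G_{n+1}]^\pm$; one exponential factor inherited from $Z_m$; and a quotient involving only $X_{\pm,n}$ (and $X_{+,n}$ in the numerator for the second bound). Since $G_{n+1}$ is independent of every other variable that appears, $\E[([G_{n+1}]^\pm)^t]<\infty$ by~\eqref{eq:Mellin_G} as long as $t<\alpha$, which holds because $r,p,p-r<\alpha$; since $E_{n+1}$ is independent of all the $X$'s and of every other $E_j$, the $E_{n+1}$-factor contributes either $\E[E_{n+1}^{t(1-1/\alpha)}]$ or $\E[E_{n+1}^{t(1-1/\alpha)+s}]$, both finite because $t(1-1/\alpha)>\alpha-1\ge-1$ when $\alpha<1$ and the exponent is nonnegative when $\alpha>1$. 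Pulling these constants out, I would apply Lemma~\ref{lem:inv-mom}(a) to the terms of the first bound and Lemma~\ref{lem:inv-mom}(b), with the roles of $+$ and $-$ interchanged, to those of the second, invoking the hypotheses $p<\alpha\rho$, $q<\alpha(1-\rho)$ and, for the second bound, $r\le p<\alpha$ (the degenerate cases $p=0$ or $q=0$ being only simpler); the branches carrying $\ell_{n+1}^{t/\alpha}$ then yield $(1+t/\alpha)^{-n}\le(1+r/\alpha)^{-n}$ (as $t\ge r$) and the branch carrying $a_n^r=\kappa^{nr}$ yields a constant. Summing the $O(m^{s'})$ terms gives the claimed bound at $T=1$.

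The hard part will be the branch carrying $a_n^r\eta_\pm^{r(1-1/\alpha)}$ in the \emph{first} inequality when $\alpha\in(0,1)$: here $\eta_\pm$ enters with the negative exponent $r(1-1/\alpha)$ \emph{and} sits inside the denominator $X_{\pm,n}^p$ through the summand $a_n\eta_\pm^{1-1/\alpha}$, so Lemma~\ref{lem:inv-mom}, which permits only nonnegative powers of $\eta_\pm$, does not apply verbatim, while the crude bound $X_{\pm,n}^{-p}\le a_n^{-p}\eta_\pm^{-p(1-1/\alpha)}$ reintroduces a factor $\kappa^{-np}$ that the prefactor $\kappa^{nr}$ cannot absorb once $r<p$; moreover one cannot simply discard the regulariser and use $X_{\pm,n}\ge\sum_{i\le n}\ell_i^{1/\alpha}[S_i]^\pm$, since that sum vanishes with positive probability. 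To deal with this I would reopen the Laplace-transform representation~\eqref{eq:inv} behind the proof of Lemma~\ref{lem:inv-mom}: writing $X_{\pm,n}^{-p}$ and $X_{\mp,n}^{-q}$ as integrals of $e^{-xX_{\pm,n}-yX_{\mp,n}}$, the factor $a_n^r\,\E\big[\eta_\pm^{r(1-1/\alpha)}e^{-xa_n\eta_\pm^{1-1/\alpha}}\mid\text{rest}\big]$ splits off, and the exponential weight both makes the singularity at $\eta_\pm=0$ integrable and, after integrating in $x$, produces a geometric factor dominated by $\kappa^{nr}$ (using Assumption~(\nameref{asm:seq_decay}) and $r<\alpha$), while the remaining integrand is exactly the quantity estimated in Lemma~\ref{lem:inv-mom}. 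The corresponding branch in the second inequality is by contrast easy, because there $X_{+,n}$ occupies the numerator and may be replaced by $\sum_{i\le n}\ell_i^{1/\alpha}[S_i]^+$, which is independent of $\eta_+$. Throughout, the regimes $\alpha\in(0,1)$ and $\alpha\in(1,2)$ have to be handled separately, as the sign of $1-1/\alpha$ flips between them.
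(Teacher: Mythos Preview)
Your plan is essentially the paper's own proof: expand $\Delta_{\pm,n+1}$ and $Z_m^s$ into their building blocks, peel off the independent factors $G_{n+1}$ and $E_{n+1}$, and feed each resulting term into Lemma~\ref{lem:inv-mom}(a) (resp.~(b)); the paper's argument says little more than this.

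You are right that the branch $a_n^r\eta_\pm^{r\zeta}$ with $\alpha<1$ (so $r\zeta<0$) falls outside the stated hypothesis $v\ge 0$ of Lemma~\ref{lem:inv-mom}---the paper glosses over this. But reopening the full Laplace-transform argument is heavier than necessary: it suffices to extend Lemma~\ref{lem:simple-estim}(a) to exponents $v\in(-1,0)$, which is immediate for $\alpha<1$. Split $\int_0^\infty y^v e^{-\lambda y^\zeta-y}\,dy$ at $y=1$; on $[1,\infty)$ use $y^v\le 1$ and the $v=0$ bound, while on $[0,1]$ use $y^\zeta\ge 1$ so that $e^{-\lambda y^\zeta}\le e^{-\lambda}\le\min\{1,\lambda^{-1}\}$ together with $\int_0^1 y^v\,dy=1/(v+1)$. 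Since $r<\alpha$ forces $r\zeta=r-r/\alpha>\alpha-1>-1$, Lemma~\ref{lem:inv-mom}(a) then goes through verbatim with a modified constant.

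Two minor slips in your treatment of the second inequality. First, the cross term $a_n^r\cdot\ell_{n+1}^{(p-r)/\alpha}$ has $\ell_{n+1}$-exponent $t=p-r$, which may be smaller than $r$; but that branch already carries the prefactor $a_n^r=\kappa^{nr}$, so $\kappa^{nr}(1+(p-r)/\alpha)^{-n}\le\kappa^{nr}$ suffices. Second, you cannot ``replace'' $X_{+,n}^{p-r}$ in the numerator by $\big(\sum_{i\le n}\ell_i^{1/\alpha}[S_i]^+\big)^{p-r}$, as that is a \emph{lower} bound; instead split $X_{+,n}^{p-r}$ via~\eqref{eq:8.1} into the sum-part plus $a_n^{p-r}\eta_+^{(p-r)\zeta}$, after which $\eta_+$ is independent of all remaining variables in each sub-branch and $\E[\eta_+^{t\zeta}]=\Gamma(1+t\zeta)<\infty$ for $t\in\{r,p\}$.
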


	\begin{proof}[Proof of Lemma~\ref{lem:inv-mom}]
			Recall that we assume without loss of generality that $T=1$. The identity \eqref{eq:inv} will be used with $\lambda=X_{+,n}$ 
		(and later with $\lambda=X_{-,n}$). The resulting 
		expression will be bounded by separately integrating the variables 
		$G_1,\ldots,G_n$ and $\eta_+,\eta_-$, then and $E_1,\ldots,E_n$ and finally 
		$\ell_1,\ldots, \ell_{n+1}$ as in the proof of Lemma \ref{lem:bound_X+X-Z}. 
		These bounds require preliminary calculations for the expressions arising 
		in the inequalities developed below, so we begin with those. 
		Let $\zeta=1-1/\alpha$, $c$, $\delta$ and $\gamma$ be as defined in 
		Lemma~\ref{lem:simple-estim} of Appendix \ref{sec:appb}.  These are constants that appear in the bounds for the Laplace transform of random variables appearing in the Chambers-Mallows-Stuck representation of stable laws.
		
		\emph{Proof of {\normalfont(a)}, part 1: The case $ q=0 $. } 
		We first provide an explicit upper bound for 
		$\E\big[\ell_{n +1}^r E_j^u \eta_+^v X_{+,n}^{-p}\big]$ 
		(the case $q=0$ and $w=0$ in Lemma~\ref{lem:inv-mom}(a)) 
		using the following constants and functions. 
		
		Define $b_{\rho} := 1/(\gamma\alpha\rho) \ge 1$ and
		\begin{align}
			\begin{split}
			d_s:=&2^s\max\{1,s^s e^{-s},\Gamma(s+1)\},
			\qquad\text{ for } s\geq 0,\\
			P(x,p,q):=&\frac{((x\wedge 1)^{-p}-1)}{p}+\frac{x^{-q}(x\wedge 1)^{q-p}}{q-p},
			\quad\text{ for } x,p,q>0, q>p,\\
			Q_p(r,u)
			:=&\frac{\alpha u(1+r)-up}{p(1-p)(\alpha u(1+r)-p)(1-p/\alpha)},
			\quad\text{ for } u\in(0,1],\ p\in(0,\min\{\alpha u,1\}),\ r\ge 0.\\
			d'_u:=&\max\{\Gamma(1+u),\Gamma(1+u)\Gamma(1/\alpha),
			\Gamma(u+1/\alpha)\}=\max\{\E[E_j^u],\E[E_j^u]\E[E_k^{-\zeta}],\E[E_j^{u-\zeta}]\}.
			\end{split}\label{def:dup}
		\end{align}
		Using these definitions, it is enough to prove that for $p\in(0,\alpha\rho)$, $r,u,v\ge 0$ 
		and $j,n\in\N$ it holds
		\begin{align}
			\label{eq:mba}
		&\E\bigg[\frac{\ell_{n+1}^r E_j^u \eta_+^v}{X_{+,n}^p}\bigg]
		\le\frac{T^{r-\frac{p}{\alpha}}b_\rho c d'_u d_v}
		{\Gamma(p)(1+r)^n}\bigg(\frac{1}{p} 
		+Q_p(r,\rho)
		+(1-\rho)^nP\big(T^{-\frac{1}{\alpha}}a_n,p,\delta\big)\bigg).
		\end{align}
		The special case of~\eqref{eq:neg_mom_(a)} 
		with $q=0$, $ w> 0 $ follows from~\eqref{eq:mba}, the independence 
		$\eta_-\indep (\eta_+,E_j,\ell_{n+1},X_{+,n})$, \eqref{cond:rho} and 
		Assumption~(\nameref{asm:seq_decay}). In fact, all the above terms within the parentheses are readily bounded by a constant except $ (1-\rho)^n\kappa^{-pn} $ which is bounded due to Assumption~(\nameref{asm:seq_decay}).
		
		For the proof of~\eqref{eq:mba}, recall that 
		$X_{+,n}=\sum_{i=1}^n\ell_i^{1/\alpha}E_i^{\zeta}[G_i]^+
		+a_n \eta_+^{\zeta}$ with $\zeta=1-1/\alpha$. 
		
		Fix $p\in(0,\alpha\rho)$. A change of variables applied to the definition of 
		the Gamma function gives 
		\begin{equation}
			\label{eq:inv-mom-E_l}
			\Gamma(p)X_{+,n}^{-p}
			=\int_0^\infty x^{p-1}e^{-xX_{+,n}}dx
			\le 1/p + J_{+,p},\quad  
			\text{where}\quad
			J_{\pm,p}:= \int_1^\infty x^{p-1}e^{-xX_{\pm,n}}dx.
		\end{equation}
		Next, we bound the conditional expectation 
		$\E\big[\eta_+^vJ_{+,p}\big|\mathcal{G}\big]$, 
		where $\mathcal{G}:=\sigma(\ell_k, E_k;k\in\N)$. 
		By~\eqref{eq:exp_Er} and~\eqref{eq:exp_G} 
		(with parameter $x\ell_k^{1/\alpha}E_k^{\zeta}$), 
		that conditional expectation is smaller than 
		\begin{align}
			\notag
			\E\big[\eta_+^vJ_{+,p}\big|\mathcal{G}\big]
			=& \int_1^\infty x^{p-1}\E\big[\eta_+^ve^{-xa_n\eta_+^\zeta}\big|\mathcal{G}\big]\prod_{k=1}^n\E\big[e^{-x\ell_i^{1/\alpha}E_i^{\zeta}[G_i]^+}\big|\mathcal{G}\big]dx\\
			\le&c d_v\int_1^\infty 
			x^{p-1}\min\{1,(a_nx)^{-\delta}\}
			\prod_{k=1}^n\bigg(1-\rho + \rho\min\bigg\{1,
			\frac{b_\rho E_k^{-\zeta}}{\ell_k^{1/\alpha}x}
			\bigg\}\bigg)dx.	\label{eq:inv_mom-E_l-2}
		\end{align}

		Using that $ \ell_{n+1}=(1-U_{n+1})U_n...U_{k+1}L_k $ and $ L_{k}\leq L_{k-1} $ (see Subsection~\ref{sec:3.1}), then for any measurable function $g\ge 0$ and $k\le n$,  we have
		\begin{equation}
			\label{eq:L_k-ell_k}
			\E[\ell_{n+1}^r g(\ell_k)]
			=(1+r)^{k-n-1}\E[L_k^r g(\ell_k)]
			\le (1+r)^{k-n-1}\E[L_{k-1}^r g(\ell_k)].
		\end{equation} 
		Moreover, we have $\E[E_j^u\min\{1,E_k^{-\zeta}y\}]\le d'_u\min\{1,y\}$ by definition \eqref{def:dup}.
		In fact, if $y>1$, then $d_u'\ge \E[E_j^u]$ and if $y<1$, then $d'_uy\ge\E[E_j^uE_k^{-\zeta}y]$.
		
		Since the factors in the product of~\eqref{eq:inv_mom-E_l-2} are in $[0,1]$, 
		the inequality in~\eqref{eq:19.2}, \eqref{eq:L_k-ell_k} and $ b_\rho\geq 1 $ yields 
		\begin{align*}
			&\E\bigg[\ell_{n+1}^rE_j^u\prod_{k=1}^n
			\big(1-\rho + \rho\min\big\{1,b_{\rho}E_k^{-\zeta}
			\ell_k^{-1/\alpha}/x\big\}\big)\bigg]\\
			&\qquad
			\le (1-\rho)^n\E[\ell_{n+1}^r]\E[E_j^u]
			+ \sum_{k=1}^{n}\rho(1-\rho)^{k-1}
			\E\big[\ell_{n+1}^rE_j^u
			\min\big\{1,b_{\rho}E_k^{-\zeta}\ell_k^{-1/\alpha}/x\big\}
			\big]\\
			&\qquad
			\le \frac{(1-\rho)^nd'_u}{(1+r)^{n+1}}
			+ \frac{d'_u}{(1+r)^n}
			\sum_{k=1}^\infty\rho(1-\rho)^{k-1}(1+r)^{k-1}
			\E\big[L_{k-1}^r\min\big\{1,
			b_{\rho}\ell_k^{-1/\alpha}/x\big\}\big]\\
			&\qquad
			\le d'_u(1+r)^{-n}\big( (1-\rho)^n
			+ b_\rho A_\rho(x)\big),
		\end{align*}
		where we define 
		$A_{\rho}(x):=\sum_{k=1}^\infty\rho(1-\rho)^{k-1}
		(1+r)^{k-1}\E[L_{k-1}^r\min\{1,\ell_k^{-1/\alpha}/x\}]$ for $x>0$.

		Hence, the inequality $\E[\ell_{n+1}^r E_j^u \eta_+^vX_{+,n}^{-p}]\Gamma(p)
		\le d'_ud_v(1+r)^{-n}(1/p+cK)$ holds for
		\[
		K:=\int_1^\infty x^{p-1}\min\big\{1,(a_n x)^{-\delta}\big\}
		((1-\rho)^n + b_\rho A_\rho(x))dx.
		\]
		
			Next, we apply Lemma~\ref{lem:simple-estim}(c) to find a formula for 
		$\int_1^\infty x^{p-1}A_{\rho}(x)dx$. Note that $p<\alpha\rho$ implies 
		$\tfrac{(1-\rho)(1+r)}{1+r-p/\alpha}=\tfrac{1+r-\rho(1+r)}{1+r-p/\alpha}<1$, 
		so Fubini's theorem and Lemmas~\ref{lem:simple-estim}(c) 
		and~\ref{lem:ell-mom}(c) yield 
		\begin{align}
			\int_1^\infty x^{p-1}A_\rho(x)dx
			\nonumber
			&=\sum_{k=1}^\infty\rho(1-\rho)^{k-1}
			(1+r)^{k-1}\E\bigg[
			L_{k-1}^r\int_1^\infty x^{p-1}\min\{1,(\ell_k^{1/\alpha}x)^{-1}\}dx\bigg]\\
			\nonumber
			&=\sum_{k=1}^\infty\rho(1-\rho)^{k-1}
			(1+r)^{k-1}\E\bigg[L_{k-1}^r
			\bigg(\frac{\ell_k^{-p/\alpha}}{p(1-p)}
			-\frac{1}{p}\bigg)\bigg]\\
			&=\sum_{k=1}^\infty\rho(1-\rho)^{k-1}
			(1+r)^{k-1}\bigg(\frac{(1+r-p/\alpha)^{1-k}}{p(1-p)(1-p/\alpha)}
			-\frac{(1+r)^{1-k}}{p}\bigg)
			=Q_p(r,\rho).
			\label{eq:int_A}
		\end{align}

		Thus~\eqref{eq:mba} follows from~\eqref{eq:int_A} and 
		Lemma~\ref{lem:simple-estim}(c) since for any $p<\alpha\rho$ and 
		$n\in\N$ we have 
		\[
		K\le\int_1^\infty x^{p-1}\big(
		(1-\rho)^n\min\{1,(a_nx)^{-\delta}\} + b_\rho A_\rho(x)\big)dx
		\le b_\rho\big[
		(1-\rho)^nP\big(a_n,p,\delta\big) + Q_p(r,\rho,p)\big].
		\]
		
		\emph{Proof of {\normalfont(a)}, part 2. The case $ q\in (0, \alpha(1-\rho))$.} 
		The general case of~\eqref{eq:neg_mom_(a)} for $q>0$ follows similarly but with lengthier expressions.
		Recall that $B(\cdot,\cdot)$ denotes the beta function and define for any 
		$u\in(0,1]$, $p\in(0,\alpha u\wedge 1)$, $q\in(0,\alpha\wedge1)$ and 
		$r\ge 0$, 
		\[
		R_{p,q}(r,u)
		:=\frac{(\Gamma(1/\alpha)\vee 1) B\big(1+r-p/\alpha,1-q/\alpha)
			 u(1-u)(1+r)^2(1+r-p/\alpha)}
		{pq(1-p)(1-q)(1-p/\alpha)(u(1+r)-p/\alpha)}.
		\]
		Fix $p\in(0,\alpha\rho)$, $q\in(0,\alpha(1-\rho))$ and $r,u,v,w\ge 0$. 
		We will prove that for all $j,n\in\N$, we have  
		\begin{align*}
			\E\bigg[\frac{\ell_{n+1}^r E_j^u \eta_+^v \eta_-^w}{X_{+,n}^pX_{-,n}^q}\bigg]
			&\le \frac{T^{r-\frac{p+q}{\alpha}}b_\rho b_{1-\rho}c^2 d'_u d_v d_w }
			{\Gamma(p)\Gamma(q)(1+r)^n}\Big[
			(1-\rho)^n P\big(T^{-\frac{1}{\alpha}}a_n,p,\delta\big)/q
			+\rho^nP\big(T^{-\frac{1}{\alpha}}a_n,q,\delta\big)/p\\
			&\qquad
			+\big((1-\rho)^n+\rho^n\big)
			P\big(T^{-\frac{1}{\alpha}}a_n,p,\delta\big)
			P\big(T^{-\frac{1}{\alpha}}a_n,q,\delta\big)\\
			&\qquad
			+1/(pq)
			+Q_p(r,\rho)/q
			+Q_q(r,1-\rho)/p
			+ R_{p,q}(r,\rho) + R_{q,p}(r,1-\rho)
			\Big],
		\end{align*}
		Once this bound is proven the final result follows as above, 
		by~\eqref{cond:rho} and Assumption~(\nameref{asm:seq_decay}). 
		Indeed, 
		$((1-\rho)^n+\rho^n)P(T^{-1/\alpha}a_n,p,\delta)P(T^{-1/\alpha}a_n,q,\delta)
		\le(1/p+1/(\delta-p))(1/q+1/(\delta-q))2\kappa^{n\alpha-n(p+q)}$, 
		by Assumption~(\nameref{asm:seq_decay}), which is bounded for $n\in\N$ 
		because $p+q<\alpha$.
		
		Applying~\eqref{eq:inv-mom-E_l} twice, we obtain 
		\begin{equation}
			\label{eq:Feller-bi-trick}
			\Gamma(p)\Gamma(q)X_{+,n}^{-p}X_{-,n}^{-q}
			\le 1/(pq) + J_{-,q}/p + J_{+,p}/q + J_{+,p}J_{-,q}.
		\end{equation}
		It remains to multiply~\eqref{eq:Feller-bi-trick} by 
		$\ell_n^rE_j^u\eta_+^v\eta_+^w$ and take expectations.
		The first term in~\eqref{eq:Feller-bi-trick} yields the inequality
		$\E[\ell_{n+1}^rE_j^u\eta_+^v\eta_-^w]/(pq)\le(1+r)^{-n-1}d'_ud_vd_w/(pq)$. 
		The second and third terms are bounded as in the special case $q=0$, 
		since $\eta_+$ (resp. $\eta_-$) is independent of $X_{-,n}$ (resp. $X_{+,n}$). 
		
		It remains to bound $\E[\ell_{n+1}^rE_j^u\eta_+^v\eta_-^w J_{+,p}J_{-,q}]$. 
		Note that applying Lemma~\ref{lem:simple-estim}(b) twice gives 
		\begin{align*}
			\E[e^{-x[G_1]^+-y[G_1]^-}]\le \Upsilon(x,y)
			:=\rho\min\{1,b_{\rho}/x\}+(1-\rho)\min\{1,b_{1-\rho}/y\}\leq 1.
		\end{align*}
		Recall $\mathcal{G}=\sigma(\ell_k,E_k;k\in\N)$ and 
		apply~\eqref{eq:exp_Er} to 
		$\E[\ell_{n+1}^rE_j^u\eta_+^v\eta_-^w J_{+,p}J_{-,q}|\mathcal{G}]
		=\int_1^\infty\int_1^\infty x^{p-1}y^{q-1}S(x,y)dydx$, where
		\begin{align*}
			S(x,y)
			&:=\ell_{n+1}^rE_j^u
			\E\big[\eta_+^v \eta_-^w e^{-xX_{+,n}-yX_{-,n}}\big|\mathcal{G}\big]\\
			&=\ell_{n+1}^rE_j^u\E\Big[\eta_+^ve^{-xa_n\eta_+^{\zeta}}\Big]
			\E\Big[\eta_-^ve^{-ya_n\eta_-^{\zeta}}\Big]
			\prod_{k=1}^n\E\Big[e^{-\ell_k^{1/\alpha}E_k^\zeta\left(x [G_k]^+
				-y  [G_k]^-\right)}
			\Big|\mathcal{G}\Big]\\
			&\le c^2 d_v d_w 
			\min\{1,(a_n x)^{-\delta}\}
			\min\{1,(a_n y)^{-\delta}\}
			\ell_{n+1}^rE_j^u
			\prod_{k=1}^n \Upsilon(E_k^{\zeta}\ell_k^{1/\alpha}x,
			E_k^{\zeta}\ell_k^{1/\alpha}y).
		\end{align*}
		The inequality 
		$\E[E_j^u\min\{1,E_k^{-\zeta}x\}\min\{1,E_1^{-\zeta}y\}]
		\le d'_u(\Gamma(1/\alpha)\vee 1)\min\{1,x\}\min\{1,y\}$ for 
		$k\ge 2$, along with \eqref{eq:19.2a} and~\eqref{eq:L_k-ell_k} yield 
		\begin{align*}
			&\E\bigg[\ell_{n+1}^rE_j^u
			\prod_{k=1}^n\Upsilon(E_k^{\zeta}\ell_k^{1/\alpha}x,
			E_k^{\zeta}\ell_k^{1/\alpha}y)\bigg]\\
			&\hspace{4mm}
			\le ((1-\rho)^n +\rho^n)\E[\ell_{n+1}^r]\E[E_j^u]
			+ \sum_{k=2}^{n}\rho(1-\rho)^{k-1}
			\E\bigg[\ell_{n+1}^r E_j^u
			\min\bigg\{1,
			\frac{b_{\rho}E_k^{-\zeta}}{\ell_k^{1/\alpha}x}\bigg\}
			\min\bigg\{1,
			\frac{b_{1-\rho}E_1^{-\zeta}}{\ell_1^{1/\alpha}y}\bigg\}
			\bigg]\\
			&\hspace{12mm}
			+ \sum_{k=2}^{n}(1-\rho)\rho^{k-1}
			\E\bigg[\ell_{n+1}^r E_j^u
			\min\bigg\{1,
			\frac{b_{\rho}E_1^{-\zeta}}{\ell_1^{1/\alpha}x}\bigg\}
			\min\bigg\{1,
			\frac{b_{1-\rho}E_k^{-\zeta}}{\ell_k^{1/\alpha}y}\bigg\}
			\bigg]\\
			&\hspace{4mm}
			\le b_\rho b_{1-\rho}d'_u (1+r)^{-n}
			[(1-\rho)^n+\rho^n + (\Gamma(1/\alpha)\vee 1) 
			(B_{\rho}(x,y) + B_{1-\rho}(y,x))],
		\end{align*}
		where 
		$B_s(x,y) :=\sum_{k=2}^\infty s(1-s)^{k-1}(1+r)^{k}
		\E\big[L_{k-1}^r
		\min\big\{1,\ell_k^{-1/\alpha}/x\big\}
		\min\big\{1,\ell_1^{-1/\alpha}/y\big\}\big]$ for $x,y>0$.
		
		Next we give a simple bound on some integrals of $B_s$. 
		Recall that we have $p+q<\alpha$ and $\E[U^r(1-U)^s]=B(r+1,s+1)$. 
		Thus 
		an application of Fubini's theorem, \eqref{eq:tail_int_min} and 
		Lemma~\ref{lem:ell-mom}(c) yields 
		\begin{align*}
			&\int_1^\infty\int_1^\infty x^{p-1}y^{q-1}B_\rho(x,y)dydx\\
			&\qquad
			=\sum_{k=2}^\infty\frac{\rho(1-\rho)^{k-1}}{(1+r)^{-k}}
			\E\bigg[L_{k-1}^r
			\int_1^\infty\int_1^\infty x^{p-1}y^{q-1}
			\min\big\{1,\ell_k^{-1/\alpha}/x\big\}
			\min\big\{1,\ell_1^{-1/\alpha}/y\big\}dydx
			\bigg]\\
			&\qquad
			=\sum_{k=2}^\infty\frac{\rho(1-\rho)^{k-1}}{(1+r)^{-k}}
			\E\bigg[L_{k-1}^r
			\bigg(\frac{\ell_k^{-p/\alpha}}{p(1-p)}-\frac{1}{p}\bigg)
			\bigg(\frac{\ell_1^{-q/\alpha}}{q(1-q)}-\frac{1}{q}\bigg)
			\bigg]\\
			&\qquad
			\le\sum_{k=2}^\infty\frac{\rho(1-\rho)^{k-1}}{(1+r)^{-k}}
			\E\bigg[
			\frac{L_{k-1}^r\ell_k^{-p/\alpha}\ell_1^{-q/\alpha}}{pq(1-p)(1-q)}
			\bigg]
			=R_{p,q}(r,\rho)/(\Gamma(1/\alpha)\vee 1).
		\end{align*}
		Putting all the above arguments together, the following inequalities imply 
		part (a): 
		\begin{align*}
			&\E[\ell_{n+1}^rE_j^u\eta_+^v\eta_-^w J_{+,p}J_{-,q}]\\
			&\hspace{16mm}
			\le \frac{b_{\rho}b_{1-\rho}c^2 d'_u d_v d_w}{(1+r)^n}
			\int_1^\infty\int_1^\infty 
			x^{p-1}y^{q-1}
			\min\{1,(a_n x)^{-\delta}\}
			\min\{1,(a_n y)^{-\delta}\}\\
			&\hspace{24mm}\times
			\big((1-\rho)^n+\rho^n
			+(\Gamma(1/\alpha)\vee 1) \left(B_{\rho}(x,y)+B_{1-\rho}(y,x)\right)
			\big)dydx\\
			&\hspace{16mm}
			\le \frac{b_{\rho}b_{1-\rho}c^2 d'_u d_v d_w}{(1+r)^n}\big[
			\big((1-\rho)^n+\rho^n\big)
			P\big(a_n,p,\delta\big)
			P\big(a_n,q,\delta\big)
			+R_{p,q}(r,\rho) + R_{q,p}(r,1-\rho)\big].
		\end{align*}
		
		\emph{Proof of {\normalfont(b)}.} 
		Again, we use a slightly different combination of 
		some of the previously explained ideas. We begin using \eqref{eq:8.1} to obtain
		\begin{equation}
			\label{eq:mix-mom}
			\frac{\Gamma(p)X_{-,n}^q}{2^{(q-1)^+}X_{+,n}^p}
			\le \frac{\Gamma(p)a_n^q\eta_-^{q\zeta}}{X_{+,n}^p}
			+ (X_{-,n} -a_n\eta_-^{\zeta})^q
			\bigg(\frac{1}{p} + \int_1^\infty x^{p-1}e^{-xX_{+,n}}dx\bigg).
		\end{equation}
		It remains to multiply the above expression by $\ell_{n+1}^r E_j^u\eta_+^v\eta_-^w$ and take expectations. 
		
		The first term in~\eqref{eq:mix-mom} can be bounded as in part (a). The 
		second term $(X_{-,n} -a_n\eta_-^{1-1/\alpha})^q/p$ in~\eqref{eq:mix-mom} 
		can be handled as in Lemma~\ref{lem:bound_X+X-Z} (see~\eqref{eq:12.1} 
		and~\eqref{eq:15.1}). Indeed, we have 
		\begin{equation}
			\label{eq:1.51}
			\big(X_{-,n}-a_n\eta_-^{\zeta}\big)^q\ell_{n+1}^r E_j^u\eta_+^v\eta_-^w
			= \bigg(
			\sum_{k=1}^n \ell_k^{\frac{1}{\alpha}}E_k^{\zeta}[G_k]^-
			\ell_{n+1}^{\frac{r}{q}} E_j^{\frac{u}{q}}
			\bigg)^q\eta_+^v\eta_-^w.
		\end{equation}
		
		The expected value of~\eqref{eq:1.51} may be bounded via $L^q$-seminorms: 
		denote by $\|\vartheta\|_q=\E[\vartheta^q]^{1/q'}$ the $L^q$-seminorm of 
		$\vartheta$ where $q'=q\vee 1$ (which is a true norm if $q\ge 1$). 
		Let $g_q=\E[([G_k]^-)^q]$ and $h_u=\max\{\Gamma(1+u+q\zeta),
		\Gamma(1+q\zeta)\Gamma(1+u)\}$; observe that when $\alpha<1$, 
		we have $q\zeta>\alpha-1>-1$. Then the triangle inequality and the 
		independence gives 
		\begin{align*}
			\bigg\|
			\sum_{k=1}^n \ell_k^{\frac{1}{\alpha}}E_k^{\zeta}[G_k]^-
			\ell_{n+1}^{\frac{r}{q}} E_j^{\frac{u}{q}}
			\bigg\|_q^{q'}
			&\le \bigg(
			\sum_{k=1}^n \Big\|\ell_k^{\frac{1}{\alpha}}\ell_{n+1}^{\frac{r}{q}}\Big\|_q
			\Big\|E_k^{\zeta}E_j^{\frac{u}{q}}\Big\|_q
			\big\|[G_k]^-\big\|_q
			\bigg)^{q'}\\
			&\le\frac{h_u g_q B(1+\tfrac{q}{\alpha},1+r)}{(1+r)^n}
			\bigg(\sum_{k=1}^n \Big(\frac{1+r+q/\alpha}{1+r}\Big)^{(1-k)/q'}\bigg)^{q'}\\
			&\le\frac{h_u g_q B(1+\tfrac{q}{\alpha},1+r)}{(1+r)^n}
			\frac{1+r+q/\alpha}{\big((1+r+q/\alpha)^{1/q'}-(1+r)^{1/q'}\big)^{q'}},
		\end{align*}
		which completes the bound on the second term in~\eqref{eq:mix-mom} 
		once one notes that $\eta_+$ and $\eta_-$ are independent from the other 
		variables and $\E[\eta_+^v\eta_-^w]=\Gamma(v+1)\Gamma(w+1)$. 
		
		The third term in~\eqref{eq:mix-mom} may be bounded as follows. 
		Set $s=q/2<\alpha/2\le 1$, then we may use~\eqref{eq:8.1} to obtain 
		\begin{align*}
			&\E\bigg[\bigg(\sum_{k=1}^n 
			\ell_k^{\frac{1}{\alpha}}E_k^{\zeta}[G_k]^-
			\bigg)^q\ell_{n+1}^r E_j^u\eta_+^v\eta_-^w
			\int_1^\infty x^{p-1} e^{-xX_{+,n}}dx\bigg]\\
			&\qquad
			\le\int_1^\infty x^{p-1}\E\bigg[\bigg(\sum_{k=1}^n 
			\ell_k^{\frac{s}{\alpha}}E_k^{s\zeta}([G_k]^-)^s\bigg)^2
			\ell_{n+1}^r E_j^ue^{-xX_{+,n}}\eta_+^v\eta_-^w\bigg]dx\\
			&\qquad
			= 2\int_1^\infty x^{p-1}\sum_{k=1}^n\sum_{i=k+1}^n
			\E\Big[ 
			\ell_k^{\frac{s}{\alpha}}E_k^{s\zeta}([G_k]^-)^s
			\ell_i^{\frac{s}{\alpha}}E_i^{s\zeta}([G_i]^-)^s
			\ell_{n+1}^r E_j^ue^{-xX_{+,n}}\eta_+^v\eta_-^w\Big]dx\\
			&\qquad\qquad
			+ \int_1^\infty x^{p-1}\sum_{k=1}^n 
			\E\Big[
			\ell_k^{\frac{q}{\alpha}}E_k^{q\zeta}([G_k]^-)^q
			\ell_{n+1}^r E_j^ue^{-xX_{+,n}}\eta_+^v\eta_-^w\Big]dx.
		\end{align*}
		The previous expression can be dealt with as in (a) and (b). That is, first we 
	 average with respect to $(G_n)_{n\in\N}$, using that 
		$\E[([G_k]^-)^se^{-x[G_k]^+}]=\E[([G_k]^-)^s]$. In particular, one uses Lemma \ref{lem:simple-estim} (b) for the terms containing exponentials of $G$ and \eqref{eq:Mellin_G} for the terms which do not contain  exponentials of $G$ in order to obtain a similar estimate as in \eqref{eq:inv_mom-E_l-2}. For the terms which contain $ \eta_{\pm} $, one uses Lemma \ref{lem:simple-estim} (a). Next, one takes expectations with respect to $ (E_n)_{n\in\N} $. As in the proof of Lemma \ref{lem:inv-mom} (a), one defines the appropriate $ d_u' $ which will bound all the required powers of $ E $. 
		Finally, as in steps II) 
		and III) of the proof of Lemma \ref{lem:bound_X+X-Z}, we take the expectations for $ (\ell_n)_{n\in\N}  $ using Lemma \ref{lem:ell-mom} (b). Each term in the first 
		sum can be bounded by $C'\theta_1^{i+k}(1+r)^{-n}$ for some $C'>0$, 
		$\theta_1\in(0,1)$ (independent of $i,k,n$) and all $k<i\le n$, 
		whereas each term in the second sum can be bounded by some 
		$C''\theta_2^k(1+r)^{-n}(1+(1-\rho)^nP(a_n,p,\delta))$ for some $C''>0$, 
		$\theta_2\in(0,1)$ (independent of $i,k,n$) and all $k\le n$. The claim of 
		part (b) then follows, completing the proof. 
	\end{proof}
	
	\begin{proof}[Proof of Lemma~\ref{lem:inv_bound_X+X-Z}]
		We will prove the case $s>0$, as the case $ s=0 $ is very similar. 
		The result is a consequence of Lemma~\ref{lem:inv-mom}(a). 
		Since $[S_k]^+[S_k]^-=0$, observe that using \eqref{eq:8.1} 
		\begin{align*}
			Z_{m}^s
			&\le (m+2)^{[s-1]^+}\bigg(\eta_+^s + \eta_-^s + \sum_{k=1}^{m}E_k^s\bigg)
			\quad\text{and}\quad
			\Delta^+_{n+1}
			=\ell_{n+1}^{1/\alpha}[S_{n+1}]^+ 
			+ (a_{n+1}-a_n)\eta_+^{\zeta}.
		\end{align*}
		
		Recall that $S_{n+1}=E_{n+1}^{\zeta}G_{n+1}$, where $G_{n+1}$ and 
		$E_{n+1}$ are independent of each other and of every other random variable 
		in the expectations of the statement. Similarly, $(E_{n+2},\ldots,E_m)$ is 
		independent of every other random variable in the expectations of the 
		statement. An application of Lemma~\ref{lem:inv-mom}(a) 
		(and~\eqref{eq:Mellin_G}) gives the claim if one uses hypothesis 
		(\nameref{asm:seq_decay}). The second claim follows similarly using 
		Lemma~\ref{lem:inv-mom}(b). In particular, note that the restriction on $r$ in the case (a) is due to the $r$-th moment of $G_{n+1}$ while in the case (b), the restriction on $r$ ensures 
		that the power $p-r$ of $X_{+,n+1}$ is non-negative.
	\end{proof}
	\begin{rem}Note that in the above results the parameters for the negative moments can not achieve their upper limit. This is the main reason for not being able to achieve $ \alpha'=\alpha $ in Theorem \ref{thm:density_bound}.
		\end{rem}

	\section{Final remarks}
	\label{sec:conclusions}
	In this section, we gathered some extra technical comments that may be useful for other developments. \\
{\normalfont(i)} 	Our claim for nearly-optimal bound is not proven in two particular situations. That is, in the special case where the stable process is of infinite variation and has 
	only negative jumps (i.e. $\alpha\rho=1$), $\ov{X}_T$ has exponential 
	moments and therefore our our bound is suboptimal for large $y$. However, the optimality 
	of the bound is retained in a neighborhood of $0$. Although we do not provide the details here, our methods 
	could be applied to obtain the corresponding exponential bound for the 
	density as $y\to\infty$ in this special case, 
	one may use the techniques in the proof of Proposition \ref{prop:Theta_bound} (a) and (c) to obtain exponential bounds in $x_+$. In those cases, we would show that the densities and all their derivatives decay faster than any polynomial $x_+^{-p}$, $p>0$, as $x_+\to\infty$. 
In the other extreme, when the infinite variation process has only positive jumps (i.e. $\alpha(1-\rho)=1$), analogous remarks apply. \\
	{\normalfont(ii)} We stress that the constant $C$ in 
		Proposition~\ref{prop:Theta_bound} is independent of $n$ and $x_\pm>0$. 
		In fact, it can be shown that  $(\alpha-\alpha')C$ is bounded as 
		$\alpha'\to\alpha$. 

	\appendix
	
	\section{Moments of the stick-breaking process}
	\label{app:stick_breaking}
	
	Recall from Subsection~\ref{sec:3.1}
	the definition of stick-breaking process $\ell$ on $[0,T]$
	and its remainders $(L_{k-1})_{k\in\N}$.
	\begin{lem}
		\label{lem:ell-mom}
		{\normalfont(a)} 
		Let $n\in\N$ and $p_1,\ldots,p_n>-1$ satisfy $q_k:=\sum_{i=k+1}^n p_i>-1$ 
		for $k\in\{1,\ldots,n\}$ (with $q_n:=0$). Let $q_0:=\sum_{k=1}^np_k$, 
		then we have 
		\begin{equation}
			\label{eq:ell-mom}
			\E\left[\prod_{k=1}^n \ell_k^{p_k}\right]
			=T^{q_0}\prod_{k=1}^n B(1+p_k,1+q_k),
		\end{equation}
		where $B(\cdot,\cdot)$ denotes the beta function. In particular 
		$\E\left[ \ell_k^{p}\right]= T^p(1+p)^{-k}$ for $k\geq 1$ and $p>-1$.\\
		{\normalfont(b)} 
		Let $p,q,r\ge 0$ and define $\theta=\frac{1+r+p\vee q}{1+r+p+q}\le 1$. 
		Then there is some $C>0$ such that for any $1\le j\le k\le n$ and $T>0$, 
		we have $\E[\ell_j^p\ell_k^q\ell_n^r]\le CT^{p+q+r}\theta^{p+q}(1+r)^{-n}$.\\
		{\normalfont(c)}
		If $p+q,q,r>-1$ and $k\ge 2$ then $\E[L_{k-1}^p\ell_k^q\ell_1^r]
		\le T^{p+q+r}{B(1+p+q,1+r)}(1+q)^{-1}(1+p+q)^{2-k}$.
	\end{lem}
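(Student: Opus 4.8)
The plan is to reduce all three parts to one elementary computation using the product representation of the stick-breaking variables. Since $\ell_k = T(1-U_k)\prod_{i=1}^{k-1}U_i$ and $L_{k-1} = T\prod_{i=1}^{k-1}U_i$, each expectation in the statement is homogeneous in $T$ of the degree displayed, so I would take $T=1$ throughout. The only inputs are the independence of the $U_i\sim\U(0,1)$ and the identity $\E[U_i^{a}(1-U_i)^{b}] = B(1+a,1+b)$ for $a,b>-1$, together with $B(1+a,1)=(1+a)^{-1}$.

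For part (a), I would observe that in $\prod_{k=1}^{n}\ell_k^{p_k} = \prod_{k=1}^{n}(1-U_k)^{p_k}\prod_{i=1}^{k-1}U_i^{p_k}$ the variable $U_j$ occurs inside the product factors exactly for the indices $k>j$, so its total exponent there is $\sum_{k=j+1}^{n}p_k = q_j$, while $\ell_j$ also contributes the factor $(1-U_j)^{p_j}$. Hence $\prod_{k=1}^n\ell_k^{p_k} = \prod_{j=1}^{n}U_j^{q_j}(1-U_j)^{p_j}$, and by independence $\E[\prod_k\ell_k^{p_k}] = \prod_{j=1}^{n}B(1+q_j,1+p_j) = \prod_{j=1}^{n}B(1+p_j,1+q_j)$, every Beta integral being finite precisely under the hypotheses $p_j,q_j>-1$. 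The displayed special case is the instance $p_k=p$, all other $p_i=0$ (or directly $\E[\ell_k^p] = \E[(1-U_k)^p]\prod_{i<k}\E[U_i^p] = (1+p)^{-k}$). Part (c) is the same computation for a triple product: since $L_{k-1}^p\ell_k^q = L_{k-1}^{p+q}(1-U_k)^q$ and $\ell_1 = 1-U_1$, one has $L_{k-1}^p\ell_k^q\ell_1^r = U_1^{p+q}(1-U_1)^r\,(1-U_k)^q\prod_{i=2}^{k-1}U_i^{p+q}$, whose expectation equals $B(1+p+q,1+r)\,(1+q)^{-1}(1+p+q)^{-(k-2)}$, already the asserted estimate with equality, valid when $p+q,q,r>-1$.

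For part (b) I would run the same bookkeeping for $\E[\ell_j^p\ell_k^q\ell_n^r]$ with $1\le j\le k\le n$, splitting the index set $\{1,\dots,n\}$ of the uniforms into the blocks $\{1,\dots,j-1\}$, $\{j\}$, $\{j+1,\dots,k-1\}$, $\{k\}$, $\{k+1,\dots,n-1\}$, $\{n\}$. In the generic case $j<k<n$ this yields
\[
\E[\ell_j^p\ell_k^q\ell_n^r] = B(1+p,1+q+r)\,B(1+q,1+r)\,(1+p+q+r)^{-(j-1)}(1+q+r)^{-(k-j-1)}(1+r)^{-(n-k)},
\]
and the coincidences $j=k$, $k=n$ and $j=k=n$ give analogous formulas with one or two Beta factors merged. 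The decay factor is then extracted from the elementary inequality $\tfrac{1+r}{1+r+q}\le\theta$, i.e.\ $(1+r)(1+r+p+q)\le(1+r+p\vee q)(1+r+q)$, equivalently $(1+r)(p\vee q-p)+q(p\vee q)\ge0$, which holds since $p,q\ge0$; together with $p\ge0$ it also gives $\tfrac{1+r}{1+r+p+q}\le\theta$. Writing each geometric base $(1+p+q+r)^{-1}$ and $(1+q+r)^{-1}$ as $(1+r)^{-1}$ times a factor $\le\theta$ collapses the product above to at most $C_0(p,q,r)\,\theta^{\,k-2}(1+r)^{-n}$, with the bounded Beta prefactors and a harmless $(1+r)^2$ absorbed into $C_0$; since $\theta^{k-2}\le\theta^{-2}$ and $\theta^{p+q}\in(0,1]$ is a fixed constant, the stated bound $C\,T^{p+q+r}\theta^{p+q}(1+r)^{-n}$ follows with $C=C_0(p,q,r)\,\theta^{-2-p-q}$ independent of $j,k,n,T$. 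Because $j\le k$ one in fact also gets $\theta^{k-2}\le\theta^{-2}(\sqrt{\theta})^{\,j+k}$, which is the geometric-in-$(j,k)$ decay the moment estimates in Section~\ref{sec:mom} actually invoke.

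The only mildly tedious point will be carrying the case analysis in (b) through the coincidences $j=k$, $k=n$, $j=k=n$ while checking that every Beta prefactor stays bounded by a constant depending only on $p,q,r$; the rest is a bare moment computation with independent uniforms, and the single substantive ingredient is the elementary inequality $(1+r)(1+r+p+q)\le(1+r+p\vee q)(1+r+q)$ that supplies the factor $\theta$.
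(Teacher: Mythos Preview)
Your proposal is correct and follows essentially the same route as the paper: express each $\ell_k$ and $L_{k-1}$ through the i.i.d.\ uniforms, factor the product by independence, and evaluate the resulting Beta integrals; for part~(b) both you and the paper compute the exact formula via~(a) and then extract the $\theta$-decay from an elementary ratio inequality (the paper phrases it as $(1+b+c)/(1+a+c)\le(1+b)/(1+a)$, you verify $\tfrac{1+r}{1+q+r}\le\theta$ directly---these are equivalent manipulations). Your observation that the stated exponent $\theta^{p+q}$ is merely a constant, whereas the application in Section~\ref{sec:mom} actually needs the geometric decay $\theta^{j+k}$ (which you recover as $(\sqrt{\theta})^{j+k}$ from your intermediate $\theta^{k-2}$), is a useful catch: the lemma as written understates what is proved and used.
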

	
	\begin{proof}
		(a) Recall $\ell_k=T(1-U_k)\prod_{i=1}^{k-1}U_i$ for $k\ge 1$,  implying
		$\prod_{k=1}^n \ell_k^{p_k}=T^{q_0}\prod_{k=1}^n U_k^{q_k}(1-U_k)^{p_k}$.
		Equation~\eqref{eq:ell-mom} follows from the identity 
		$\E[U_1^p(1-U_1)^q]=B(1+p,1+q)$ and the independence of the uniformly 
		distributed random variables $U_1,\ldots,U_n$. 
		
		(b) Applying~\eqref{eq:ell-mom} yields (note that some factors in the 
		product become 1 in this case)
		\[
		\frac{\E[\ell_j^p\ell_k^q\ell_n^r]}{T^{p+q+r}}
		=(1+r)^{k-n}\times\begin{cases}
			\begin{split}
				&B(1+p,1+q+r)(1+q+r)^{j-k-1}\\
				&\qquad\times B(1+q,1+r)(1+p+q+r)^{1-j},
			\end{split}
			&j<k<n,\\
			(1+p+q+r)^{1-j}B(1+p,1+q+r)(1+q+r)^{j-k},
			&j<k= n,\\
			(1+p+q+r)^{1-j}B(1+p+q,1+r)(1+q+r)^{j-k},
			&j=k< n\\
			(1+p+q+r)^{-j}(1+q+r)^{j-k},
			&j=k= n.
		\end{cases}
		\]
		
		In order to avoid considering four different 
		cases to obtain the claimed bound, observe that $(1+b+c)/(1+a+c)\le(1+b)/(1+a)$ for $0\le a\le b$ and 
		$c\ge 0$. The claim then follows easily. 
		
		(c) The proof is analogous to that of part (a). 
	\end{proof}
	\section{Technical lemmas for moment estimates}
	\label{sec:appb}
	\begin{lem}
		\label{lem:seq_in_01}
		Let $x_1,x_2,\ldots,y_1,y_2,\ldots\in[0,1]$. Then for any $r\in[0,1]$ and 
		$n\in\N$, it holds that 
		\begin{align}
			\label{eq:19.2}
			\prod_{k=1}^n ((1-r)+r x_k)
			&\le (1-r)^n+\sum_{k=1}^n r(1-r)^{k-1}x_k\\
			\label{eq:19.2a}
			\prod_{k=1}^n((1-r)y_k+r x_k)
			&\le r^n + (1-r)^n + \sum_{k=2}^n r(1-r)^{k-1}x_k y_1
			+\sum_{k=2}^n(1-r)r^{k-1}x_1 y_k.
		\end{align}
	\end{lem}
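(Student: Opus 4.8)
The plan is to prove the two displayed inequalities separately, deriving the second from the first. No tool beyond induction and the trivial bounds $0\le (1-r)+rx_k\le 1$ and $0\le (1-r)y_k+rx_k\le 1$ (valid since all the $x_k,y_k\in[0,1]$) is needed.

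First I would prove \eqref{eq:19.2} by induction on $n$. The case $n=1$ is an equality. For the inductive step, write $\prod_{k=1}^n((1-r)+rx_k)=((1-r)+rx_n)\prod_{k=1}^{n-1}((1-r)+rx_k)$ and apply the inductive hypothesis to the second factor. Then I would split the resulting quantity as $((1-r)+rx_n)(1-r)^{n-1}+((1-r)+rx_n)\sum_{k=1}^{n-1}r(1-r)^{k-1}x_k$: the first term equals $(1-r)^n+r(1-r)^{n-1}x_n$, which is precisely the $k=n$ summand to be produced together with $(1-r)^n$; the second term is bounded, using $(1-r)+rx_n\le 1$, by $\sum_{k=1}^{n-1}r(1-r)^{k-1}x_k$. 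Adding these gives exactly the right-hand side of \eqref{eq:19.2}.

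Next I would deduce \eqref{eq:19.2a} by isolating the first factor: writing $Q:=\prod_{k=2}^n((1-r)y_k+rx_k)$, we have $\prod_{k=1}^n((1-r)y_k+rx_k)=(1-r)y_1\,Q+rx_1\,Q$. For the first summand I use $y_k\le 1$ ($k\ge2$) to get $Q\le\prod_{k=2}^n((1-r)+rx_k)$, apply \eqref{eq:19.2} to the $n-1$ factors indexed by $2,\dots,n$, multiply by $(1-r)y_1$, and bound the leading term using $y_1\le1$; this produces $(1-r)^n+\sum_{k=2}^nr(1-r)^{k-1}x_ky_1$. For the second summand I instead use $x_k\le1$ ($k\ge2$) to get $Q\le\prod_{k=2}^n(r+(1-r)y_k)$, apply \eqref{eq:19.2} with the roles of $r$ and $1-r$ interchanged, multiply by $rx_1$, and bound the leading term using $x_1\le1$; this produces $r^n+\sum_{k=2}^n(1-r)r^{k-1}x_1y_k$. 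Summing the two estimates yields \eqref{eq:19.2a}.

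I do not expect a genuine obstacle: both arguments are elementary. The only place requiring a little care is the index bookkeeping when invoking \eqref{eq:19.2} on the truncated product $\prod_{k=2}^n$ (a shift of the exponents in the geometric weights) and remembering to run it with $r$ and $1-r$ exchanged for the term carrying $x_1$.
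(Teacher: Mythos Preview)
Your proposal is correct and follows essentially the same route as the paper. For \eqref{eq:19.2} the paper peels off the factor with index $1$ iteratively (writing $\prod_{k=1}^n\le rx_1+(1-r)\prod_{k=2}^n$ and repeating), whereas you peel off the factor with index $n$ via induction; these are the same argument up to the direction of the recursion. For \eqref{eq:19.2a} your derivation is identical to the paper's: split off the $k=1$ factor, bound the remaining product $Q$ in two ways using $y_k\le 1$ (resp.\ $x_k\le 1$), apply \eqref{eq:19.2} (resp.\ its $r\leftrightarrow 1-r$ version), and finally use $y_1\le 1$, $x_1\le 1$ on the leading geometric terms.
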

	
	\begin{proof}
		Identity~\eqref{eq:19.2} follows by developing the product term by term 
		and using the fact that every term in the product is bounded by $1$. Indeed, 
		we have 
		\[
		\prod_{k=1}^n ((1-r)+r x_k)
		\le r x_1 + (1-r)\prod_{k=2}^n ((1-r)+r x_k)
		\le \cdots 
		\le (1-r)^n+\sum_{k=1}^n r(1-r)^{k-1}x_k.
		\]
		The same ideas yield~\eqref{eq:19.2a}:
		\begin{align*}
			\prod_{k=1}^n((1-r)y_k+r x_k)
			&=(1-r)y_1\prod_{k=2}^n((1-r)y_k+r x_k)
			+r x_1\prod_{k=2}^n((1-r)y_k+r x_k)\\
			&\le y_1\bigg((1-r)^n+\sum_{k=2}^n r(1-r)^{k-1}x_k\bigg)
			+x_1\bigg(r^n+\sum_{k=2}^n(1-r)r^{k-1}y_k\bigg)\\
			&\le r^n + (1-r)^n + \sum_{k=2}^n r(1-r)^{k-1}x_k y_1
			+\sum_{k=2}^n(1-r)r^{k-1}x_1 y_k.
			\qedhere
		\end{align*}
	\end{proof}
	In the next lemma, we give bounds for the Laplace transforms of random variables related to the Chambers-Mallows-Stuck representation of stable laws.
	\begin{lem}
		\label{lem:simple-estim}
		{\normalfont(a)}
		Suppose $\alpha\ne 1$. 
		For any $s\ge 0$ define $d_s=2^s\max\{1,s^s e^{-s},\Gamma(s+1)\}$ 
		(with the convention $0^0=1$) and let $\zeta=1-1/\alpha$. Define
		\[
		(c,\delta)
		:=\begin{cases}
			(\max\{1,\int_0^\infty \exp(-y^\zeta)dy\},1/\zeta),&\text{ if }\alpha\in (1,2),\\
			((2+1/|\zeta|)\max\{1,(2e^{-1}/\alpha)^{1/\alpha}\},1),&\text{ if }\alpha\in (0,1).
		\end{cases}
		\]
		If $Y$ is a  exponential variable with unit mean, then for $s,x\ge 0$ we have 
		\begin{equation}
			\label{eq:exp_Er}
			\E\big[Y^s e^{-xY^\zeta}\big]\le c d_s \min\{1,x^{-\delta}\}.
		\end{equation}
		{\normalfont(b)} 
		Recall that $ G=g(V) $ where $ V $ follows a $ U(-\frac \pi 2,\frac \pi 2) $ law. Suppose $\alpha\ne 1$ then for any $x>0$, it holds that 
		\begin{equation}
			\label{eq:exp_G}
			\E[e^{-xG}|G>0]\le \min\{1,(\gamma\alpha\rho x)^{-1}\},
			\enskip\text{and}\enskip
			\E[e^{-x[G]^+}]\le 1-\rho +\rho \min\{1,(\gamma\alpha\rho x)^{-1}\},
			\enskip\text{where}
		\end{equation}
		\[
		\gamma
		:=\begin{cases}
			1, &\text{ if }\alpha\in (1,2),\\
			\min\big\{\cos\big(\pi\big(\frac{1}{2}-\rho\big)\big),
			\cos\big(\pi\big(\frac{1}{2}-\alpha\rho\big)\big)\big\}^{1/\alpha-1}, 
			&\text{ if }\alpha\in (0,1).
		\end{cases}
		\]
		{\normalfont(c)}
		For any $p\in\R$, $q>p$ and $b>0$ we have 
		\begin{equation}
			\label{eq:tail_int_min}
			\int_1^\infty x^{p-1}\min\{1,(bx)^{-q}\}dx
			=P(b,p,q),\text{ where }
			P(b,p,q):=
			\frac{(b\wedge 1)^{-p}-1}{p}
			+ \frac{b^{-q}(b\wedge 1)^{q-p}}{q-p}.
		\end{equation}
		Moreover, if $q=1>b$, then the above integral equals
		$\frac{1}{p(1-p)}b^{-p}-\frac{1}{p}$. 
	\end{lem}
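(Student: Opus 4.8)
The plan is to treat the three parts independently; each reduces to a one-variable calculus estimate. \emph{Part (c)} is a direct computation. If $b\ge 1$ then $bx\ge 1$ throughout $[1,\infty)$, so the integrand equals $b^{-q}x^{p-q-1}$ and the integral is $b^{-q}/(q-p)$; this agrees with $P(b,p,q)$ because $(b\wedge 1)^{-p}-1=0$. If $b<1$, split at $x=1/b$: on $[1,1/b]$ the integrand is $x^{p-1}$, contributing $(b^{-p}-1)/p$, while on $[1/b,\infty)$ it is $b^{-q}x^{p-q-1}$, contributing $b^{-q}(1/b)^{p-q}/(q-p)=b^{-p}/(q-p)$; summing gives $P(b,p,q)$. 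The final assertion is the identity $(b^{-p}-1)/p+b^{-p}/(1-p)=b^{-p}/(p(1-p))-1/p$, valid when $q=1$.

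\emph{Part (a).} First record the trivial bound $\E[Y^se^{-xY^\zeta}]\le\E[Y^s]=\Gamma(s+1)\le d_s\le c d_s$ (using $c\ge 1$); since $\delta>0$ in both cases, this already gives the claim for $x\le 1$. For $x>1$, use $y^se^{-y}\le\big(\sup_{t\ge 0}t^se^{-t/2}\big)e^{-y/2}=(2s)^se^{-s}e^{-y/2}\le d_se^{-y/2}$, which reduces the bound to $\int_0^\infty e^{-y/2}e^{-xy^\zeta}\,dy$. When $\alpha\in(1,2)$ (so $\zeta>0$, $\delta=1/\zeta$), using $e^{-y/2}\le 1$ and substituting $u=xy^\zeta$ gives $\int_0^\infty e^{-xy^\zeta}\,dy=\zeta^{-1}\Gamma(1/\zeta)x^{-\delta}$, and $\zeta^{-1}\Gamma(1/\zeta)=\int_0^\infty e^{-y^\zeta}\,dy\le c$. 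When $\alpha\in(0,1)$ (so $\zeta=-|\zeta|<0$, $\delta=1$), I would split $\int_0^\infty e^{-y/2}e^{-x/y^{|\zeta|}}\,dy$ over $[0,1]$ and $[1,\infty)$: on $[0,1]$ use $e^{-x/y^{|\zeta|}}\le e^{-x}\le (ex)^{-1}$, and on $[1,\infty)$ use $e^{-x/y^{|\zeta|}}\le y^{|\zeta|}/x$ together with $\int_0^\infty y^{|\zeta|}e^{-y/2}\,dy=2^{|\zeta|+1}\Gamma(|\zeta|+1)$. A Stirling estimate shows $\tfrac{1}{e}+2^{|\zeta|+1}\Gamma(|\zeta|+1)\le(2+1/|\zeta|)\max\{1,(2e^{-1}/\alpha)^{1/\alpha}\}=c$, which is exactly how $c$ was chosen.

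\emph{Part (b).} Since $E_i>0$, $G=G_i$ has the same sign as $S_i$, so $\p(G>0)=\rho$; inspecting the numerator and the two cosine factors of $g$ shows $\{g>0\}=(-\omega,\tfrac{\pi}{2})$, with $\tfrac{\pi}{2}+\omega=\pi\rho$. Hence $\E[e^{-xG}\mid G>0]=(\rho\pi)^{-1}\int_{-\omega}^{\pi/2}e^{-xg(v)}\,dv$, and the substitution $u=\alpha(v+\omega)$ turns this into $(\rho\pi\alpha)^{-1}\int_0^{\alpha\pi\rho}e^{-xg(u/\alpha-\omega)}\,du$, where $g(u/\alpha-\omega)=\sin u\big/\big(\cos^{1/\alpha}(\tfrac{u}{\alpha}-\omega)\cos^{1-1/\alpha}(\tfrac{1-\alpha}{\alpha}u-\omega)\big)$. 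The crucial step is the pointwise bound $g(u/\alpha-\omega)\ge\gamma\sin u$ for $u\in(0,\alpha\pi\rho)$: when $\alpha>1$ both exponents $1/\alpha,1-1/\alpha$ lie in $(0,1)$ and the two cosines lie in $[0,1]$, so the denominator is $\le 1=\gamma$; when $\alpha<1$ the exponent $1-1/\alpha$ is negative, and since the argument $\tfrac{1-\alpha}{\alpha}u-\omega$ ranges over $[\pi(\tfrac{1}{2}-\rho),\pi(\tfrac{1}{2}-\alpha\rho)]\subset(-\tfrac{\pi}{2},\tfrac{\pi}{2})$, the corresponding cosine is $\ge\min\{\cos(\pi(\tfrac{1}{2}-\rho)),\cos(\pi(\tfrac{1}{2}-\alpha\rho))\}$, producing precisely the stated $\gamma$. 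Because $\rho\le 1/\alpha$ forces $\alpha\pi\rho\le\pi$, the elementary inequality $\sin u\ge\tfrac{2}{\pi}\min\{u,\pi-u\}$ gives $\int_0^{\alpha\pi\rho}e^{-x\gamma\sin u}\,du\le\int_0^\pi e^{-x\gamma\sin u}\,du\le\pi/(x\gamma)$, whence $\E[e^{-xG}\mid G>0]\le(\gamma\alpha\rho x)^{-1}$; combined with the trivial bound $\le 1$ this is the first inequality, and $\E[e^{-x[G]^+}]=\p(G\le 0)+\p(G>0)\E[e^{-xG}\mid G>0]=(1-\rho)+\rho\E[e^{-xG}\mid G>0]$ gives the second.

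The main obstacle is the bookkeeping in part (b): correctly identifying $\{g>0\}$ and tracking the range of the argument of $\cos^{1-1/\alpha}$ so that $\gamma$ emerges exactly as in the statement, with its two-regime ($\alpha>1$ versus $\alpha<1$) form. The estimate in part (a) for $\alpha\in(0,1)$ needs a careful Stirling comparison to match the prescribed constant $c$, but is otherwise routine, and part (c) is immediate.
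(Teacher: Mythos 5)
Your arguments for parts (b) and (c) are correct and essentially the same as the paper's (your substitution $u=\alpha(v+\omega)$ in part (b) is just a reparameterization of the paper's $z$, and the use of the concavity of $\sin$ on $[0,\pi]$ is the same idea). For part (a) with $\alpha\in(1,2)$ you also match the paper (the $e^{-y/2}$ factor you carry is immediately discarded). The one genuine deviation, and the one place where there is a gap, is part (a) with $\alpha\in(0,1)$.

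The paper splits the integral $\int_0^\infty e^{-xy^\zeta-y/2}\,dy$ at $y=x^\alpha$. That choice is deliberate: bounding $\int_{x^\alpha}^\infty e^{-y/2}\,dy=2e^{-x^\alpha/2}$ by $2(2e^{-1}/\alpha)^{1/\alpha}x^{-1}$ (maximizing $ye^{-y^\alpha/2}$) and the Gamma-tail $\int_0^{x^\alpha}e^{-xy^\zeta}\,dy$ by $\tfrac{1}{|\zeta|}(e^{-1}/\alpha)^{1/\alpha}x^{-1}$ (maximizing $ye^{-y^\alpha}$) produces the sum $\big(2(2e^{-1}/\alpha)^{1/\alpha}+\tfrac{1}{|\zeta|}(e^{-1}/\alpha)^{1/\alpha}\big)x^{-1}$, and the constant $c=(2+1/|\zeta|)\max\{1,(2e^{-1}/\alpha)^{1/\alpha}\}$ is visibly the obvious over-bound of this expression. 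In other words, $c$ was manufactured from the $y=x^\alpha$ splitting.

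You instead split at $y=1$, obtaining the bound $\big(\tfrac{1}{e}+2^{|\zeta|+1}\Gamma(|\zeta|+1)\big)x^{-1}$. This is a perfectly reasonable decomposition, but the resulting constant $\tfrac{1}{e}+2^{|\zeta|+1}\Gamma(|\zeta|+1)$ bears no structural resemblance to $c$. Your remark that ``this is exactly how $c$ was chosen'' is therefore not accurate: it was chosen to match a different calculation. Showing that $\tfrac{1}{e}+2^{|\zeta|+1}\Gamma(|\zeta|+1)\le (2+1/|\zeta|)\max\{1,(2e^{-1}/\alpha)^{1/\alpha}\}$ uniformly over $|\zeta|=1/\alpha-1\in(0,\infty)$ appears to be true (one checks both regimes $|\zeta|\to 0^+$, where the right side blows up, and $|\zeta|\to\infty$, where Stirling shows the right side dominates by a factor of order $|\zeta|/\sqrt{|\zeta|}$, and the threshold $|\zeta|=e/2-1$ where $\max\{1,\cdot\}$ switches), but this is a separate calculus lemma that you have asserted rather than proved. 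Either carry out that comparison explicitly, or replace your split at $y=1$ by the paper's split at $y=x^\alpha$, after which the bound by $c$ is immediate.

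Two small further remarks. In part (a) the inequality $y^s e^{-y/2}\le (2s)^s e^{-s}\le d_s$ (with $0^0=1$) is what both you and the paper need; the paper's text states the slightly weaker-looking $y^se^{-y}\le d_s$, but the factor-of-two gap is absorbed into $d_s=2^s\max\{\dots\}$, so your spelling-out is correct. In part (b), your identification $\{g>0\}\cap(-\tfrac{\pi}{2},\tfrac{\pi}{2})=(-\omega,\tfrac{\pi}{2})$ tacitly uses $\rho\le 1/\alpha$ (to ensure $\alpha(\tfrac{\pi}{2}+\omega)\le\pi$) and $\rho\ge 1-1/\alpha$ (to rule out a second positivity interval); both are supplied by~\eqref{cond:rho}, so this is fine, but it is worth saying.
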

	
	\begin{proof}
		(a) Consider first the case $\alpha>1$. For $s\geq 0$ it holds that 
		$\E\big[Y^s e^{-xY^\zeta}\big]\le \E\big[Y^s\big]
		=\Gamma(s+1)\le d_s\le c d_s$. Moreover, since $y^se^{-y}\leq d_s$  for all $y\in\R_+$, we have
		\[
		\E\big[Y^s e^{-xY^\zeta}\big]
		=\int_0^\infty y^s e^{-xy^\zeta-y}dy
		\le d_s\int_0^\infty e^{-xy^\zeta}dy
		\le c d_s x^{-1/\zeta}
		= c d_s x^{-\delta},
		\]
		implying~\eqref{eq:exp_Er} for $\alpha>1$. 
		
		Suppose $\alpha<1$, so that $\zeta<0$. As before, we have 
		$\E\big[Y^s e^{-xY^\zeta}\big]\le\E\big[Y^s\big]=\Gamma(s+1)\le cd_s$. 
		Without loss of generality
		assume that 
		$x\ge 1$. Recall that $ y^se^{-y}\leq d_s$ for all $y\in\R_+$. Hence 
		\[
		\E\big[Y^s e^{-xY^\zeta}\big]
		=\int_0^\infty y^s e^{-xy^\zeta-y}dy
		\le d_s\int_0^\infty e^{-xy^\zeta-y/2}dy.
		\]
		Decomposing this integral into two parts yields 
		\begin{align}
			\label{eq:22.1}
			\int_0^\infty e^{-xy^\zeta-y/2}dy
			\le\int_0^{x^\alpha} e^{-xy^\zeta}dy + \int_{x^\alpha}^\infty e^{-y/2}dy
			\le  2(2e^{-1}/\alpha)^{1/\alpha}x^{-1}+\int_0^{x^\alpha} e^{-xy^\zeta}dy,  
		\end{align}
		since $(2e^{-1}/\alpha)^{1/\alpha}\geq ye^{-y^\alpha/2}$ for any $y\ge 0$. 
		For the remaining integral in~\eqref{eq:22.1}, note 
		that for any $s\ge 0$ and $z\ge 1$ we have 
		$\int_z^\infty y^{-s}e^{-y}dy\le  z^{-s} e^{-z}$ 
		and change of variables $u=xy^\zeta$ (recall that $1+\alpha \zeta=\alpha$): 
		\[
		\int_0^{x^\alpha} e^{-xy^\zeta}dy
		=\frac{x^{-1/\zeta}}{|\zeta|}\int_{x^\alpha}^\infty u^{1/\zeta-1}e^{-u}du
		\le \frac{1}{|\zeta|}x^{-(1-\alpha)/\zeta-\alpha}e^{-x^\alpha}
		= \frac{1}{|\zeta|}e^{-x^\alpha}
		\le \frac{1}{|\zeta|}(e^{-1}/\alpha)^{1/\alpha}x^{-1}.
		\]
		This concludes the proof of~\eqref{eq:exp_Er}.
		
		(b) The conditional law of $G$ given $G>0$ is that of $g(V)$, where $V$ is 
		uniformly distributed on the interval 
		$(\pi\big(\frac{1}{2}-\rho\big),\frac{\pi}{2}\big)$. Define $z$ by
		$v=z\pi\rho-\pi\big(\rho-\frac{1}{2}\big)$ and note that 
		$v\in(\pi\big(\frac{1}{2}-\rho\big),\frac{\pi}{2}\big)$ if and only if 
		$z\in(0,1)$.
		Then, for 
		$v\in(\pi\big(\frac{1}{2}-\rho\big),\frac{\pi}{2}\big)$,
		we claim that the function $g$ in~\eqref{eq:g-function} 
		satisfies
		\[
		g(v)\geq  \gamma \sin(z\pi\alpha\rho)>0.
		\]
		Indeed, in the case that $\alpha>1$, the product of cosines in the denominator of $g$ 
		is bounded above by $1$, the exponents are positive 
		and
		$\alpha\rho\in[\alpha-1,1]$ with $ \gamma=1 $. 
		Similarly, in the case that $\alpha<1$, then $v-z\pi\alpha\rho\in
		\big(\pi\big(\frac{1}{2}-\rho\big),\pi\big(\frac{1}{2}-\alpha\rho\big)\big)$. 
		Since $\rho,\alpha\rho\in(0,1)$, we have 
		$\cos^{1/\alpha-1}(v-z\pi\alpha\rho)\geq \gamma>0$ and 
		the inequality holds.
		
		The concavity of the sine function on $(0,\frac{\pi}{2})$, implies 
		$\sin\big(u\tfrac{\pi}{2}\big)\geq u$ 
		for any $u\in(0,1)$. 
		Furthermore, $\sin(z\pi\alpha\rho)$ is symmetric on 
		$z\in[0,\frac{1}{\alpha\rho}]\supset[0,1]$ with respect to the point $ z=\frac 1{2\alpha\rho} $. Hence, using these properties, we have 
		\begin{align*}
			\E\big[e^{-xG}\big|G>0\big]
			&=\int_0^1 e^{-xg(z\pi\rho-\pi(\rho-1/2))}dz
			\le\int_0^1 e^{-\gamma x\sin(z\pi\alpha\rho)}dz
			\le\int_0^{\frac{1}{\alpha\rho}} e^{-\gamma x\sin(z\pi\alpha\rho)}dz\\
			&= 2\int_0^{\frac{1}{2\alpha\rho}} e^{-\gamma x\sin(z\pi\alpha\rho)}dz
			\le 2\int_0^{\frac{1}{2\alpha\rho}} e^{-2\gamma\alpha\rho xz}dz
			\le 2\int_0^\infty e^{-2\gamma\alpha\rho xz}dz
			=\frac{1}{\gamma\alpha\rho x}.
		\end{align*}
		The conclusion of part (b) then follows from the fact that
		$\E[e^{-xG}|G>0]\le 1$. For the second statement, it is enough to note that $ \mathbb{P}([G]^+=0)=1-\rho. $ 
		
		(c) The proof follows from elementary calculations. 
	\end{proof}
		\section{The Cauchy case $\alpha=1$}
	\label{app:cauchy}
	In this section, we will briefly remark the changes needed in all the arguments for the case $\alpha=1$ in our proofs, we proceed in the order that the arguments are presented in the main text.
	
	The Chambers-Mallows-Stuck method is not required in this case because 
	when $\alpha=1$, the stable 
	random variables $(S_k)_{k\geq1}$ have the explicit density 
	\[
	p(x)=\frac{\cos(\omega)/\pi}{\cos^2(\omega) + (x-\sin(\omega))^2},
	\qquad x\in\R.
	\]
	
	For the approximation $X_{\pm,n}$ we use $X_{\pm,n}:=\sum_{k=1}^n
	\ell_k^{1/\alpha}[S_k]^\pm+a_n \eta_\pm, $ where 
	$\eta_{\pm}$ are respectively  distributed as $\pm S_1$ 
	conditioned on the events $\{S_1>0\}$ and $\{S_1<0\}$. Note that this already hints at the fact that we will not use exponential random variables as ``length'' in this case. Instead we will use the full Cauchy random variables to  do the analysis.

	The derivative operator is defined as 
	\begin{align*}
		\D^\pm_m:=	\eta_\pm\partial_{\eta_\pm}
		\pm\sum_{k=1}^m [S_k]^\pm \partial_{S_k}.
	\end{align*}
	This operator satisfies
	\begin{align*}
		[S_k]^\pm\partial_{S_k}[X_{\pm,n}]
		&=\pm\ell_k[S_k]^\pm\1_{\{k\le n\}},\quad
 k\in\N,\enskip\text{if}\enskip\alpha=1,\\
		\D^\pm_m\big[\big(X^p_{\pm,n},f(X_{\mp,n})\big)\big]
		&=\big(pX_{\pm,n}^{p},0\big),\quad
		 m\ge n\ge 1.
	\end{align*}
	The space of smooth random variables is 
	\begin{align*}
		\s_m(\Omega)
		:=\big\{\Phi\in L^0(\Omega): 
		\enskip\exists \phi(\cdot,\vartheta)\in 
		\s_\infty((0,\infty)^{2m+2}, S), \enskip
		\Phi=\phi(\mathcal{S}_m,\mathcal{U}_m,
		\eta_+,\eta_-,\vartheta)\big\},
	\end{align*}
	where $\mathcal{S}_m:=(S_1,\ldots,S_m)$.
	The Ibpf in finite dimension gives
	\begin{align*}
		H^\pm_{n,m}(\Phi)
		:=\frac{1}{X_{\pm,n}}\Big(\Big(\frac{2\eta_\pm (\pm\eta_\pm-\sin(\omega))}
		{\cos^2(\omega)+(\eta_\pm\mp\sin(\omega))^2}
		\pm\sum_{k=1}^m
		\frac{2[S_k]^\pm(S_k-\sin(\omega))}
		{\cos^2(\omega)+(S_k-\sin(\omega))^2}\Big)\Phi
		-\D^\pm_m[\Phi]\Big).
	\end{align*}
	Note that the formula is essentially different from the one in \eqref{eq:IBP_n} as this formula is based on Cauchy random variables. 
	In the proof of Proposition \ref{prop:IBP_n}, one uses
	\begin{align*}
		\widetilde\partial_\vartheta[\omega]
		=\omega\frac{2(\vartheta-\sin(\omega))}{\cos^2(\omega)
			+(\vartheta-\sin(\omega))^2}
		-\partial_\vartheta[\omega].
	\end{align*}
	In this case recall that if $\eta$ is a Cauchy random variable with parameter $\rho$. Also one has in this case
	\begin{align*}
		\E[\Lambda_1 \eta\partial_{\eta}[\Lambda_2]]=\E\Big[\Lambda_1\Lambda_2[\eta]^\pm
		\frac{2(\eta-\sin(\omega))}{\cos^2(\omega)+(\eta-\sin(\omega))^2}
		-\Lambda_2\partial_{\eta}[\Lambda_1[\eta]^\pm]\Big]
		=\E[\Lambda_2\widetilde{\partial}_{\eta}[\Lambda_1[\eta]^\pm]].
	\end{align*}
	From here the rest of the proof of Proposition \ref{prop:IBP_n} follows similarly. The statement in Theorem \ref{thm:series} and its proof are independent of $\alpha$.
	
	The statement of the main Theorem and its proof remain unchanged if $\alpha=1$.
	
	Starting in Lemma \ref{lem:simplify_HPhi} and for the rest of the proof in this case, we let $ Z_m=1 $. The proof of this lemma in this case is as follows:
	
	Recall that  
	$\D_m^\pm[X_{\pm,n}^{p}]=pX_{\pm,n}^{p}$ and 
	$\D_m^\mp[X_{\pm,n}^p]=0$ for any $p\in\R$. Define the bounded functions 
	$q_\pm:x\mapsto 2[x]^+(\pm x-\mu)/(\gamma^2+(x\mp\mu)^2)$. 
	Recursively define the bounded functions 
	$q^{(k+1)}_\pm(x) := x \partial_x q^{(k)}_\pm(x)$ and the operators 
	$\D_m^{\pm,k+1} :=\D_m^{\pm,k}\D_m^{\pm,1}$ for $k\ge 1$, 
	where $q^{(1)}_\pm=q_\pm$ and $\D_m^{\pm,1}=\D_m^{\pm}$. 
	Let $Z^{\pm}_{k,m} = q^{(k)}_\pm(\eta_\pm) 
	+ \sum_{i=1}^m q^{(k)}_\pm([S_i]^\pm)$, we deduce that an iteration 
	of~\eqref{eq:IBP_n} yields $X_{+,n}^{-k_+}X_{-,n}^{-k_-}$ multiplied by 
	\[
	p_+(Z^+_{1,m},\ldots,Z^+_{k_+,m},\D_m^{+,1}[\Phi],\ldots,\D_m^{+,k_+}[\Phi])
	p_-(Z^-_{1,m},\ldots,Z^-_{k_-,m},\D_m^{-,1}[\Phi],\ldots,\D_m^{-,k_-}[\Phi]),
	\] 
	where $p_\pm$ are multivariate polynomials of degree $k_\pm$ 
	whose coefficients are linear in $\Phi$ and do not depend on $n$ or $m$. 
	The arguments of $p_\pm$ are uniformly bounded by $Km$ for some $K>0$ 
	independent of $n$ and $m$ (recall $\|q_\pm^{(n)}\|_\infty<\infty$ and 
	$\|\partial_+^{j_+}\partial_-^{j_-}\phi\|_\infty<\infty$ for $j_\pm\le k_\pm$). 
	so the claim follows easily.
	
{Not surprisingly the proof of the technical Proposition \ref{prop:Theta_bound} and Lemmas in Section \ref{subsec:indicators} do not depend on the fact that $ \alpha=1 $ or not. In fact, we only used algebraic properties in order to obtain these results supposing the correct moment estimates.} The required moment estimates are obtained in the next subsection.
	
	\subsection{Moment bounds for the Cauchy case $\alpha=1$}
	\label{subsec:Cauchy}
	
	For the proofs of Lemmas~\ref{lem:bound_X+X-Z},~\ref{lem:inv-mom} 
	and~\ref{lem:inv_bound_X+X-Z}, note that in comparison with 
	the Chambers-Mallows-Stuck decomposition method we are not using any 
	decomposition of the Cauchy random variables. This means that the 
	proofs will be reduced to bounding the conditional expectations with 
	respect to $(\ell_i)_{i\in\{1,\ldots,n\}}$, and then taking expectations there. 
	
	For the proof of Lemma~\ref{lem:inv-mom} (and hence of 
	Lemma~\ref{lem:inv_bound_X+X-Z}), note that in order to compute inverse 
	moments, we use a change of variable trick to handle negative moments 
	via Laplace transforms. In the present case, this implies the computation of $\E[e^{-x[S_1]^\pm}]$. This is done 
	directly using the fact that its density is known. The rest of the calculations 
	are very similar. Indeed, the version of Lemma~\ref{lem:simple-estim} for the case $\alpha=1$ contains the only 
	noticeable change. That is,
	
	\begin{lem}
		\label{lem:13}
		Suppose $\alpha=1$, then for any $s\in(0,1)$ and $x>0$ we have
		\[
		\E[\eta_+^s e^{-x\eta_+}]\le 
		\frac{\cos(\omega)}{\pi\rho}\min\bigg\{
		\int_0^\infty \frac{y^s}{\cos^2(\omega) + (y-\sin(\omega))^2}dy,
		\frac{\Gamma(s+1)}{\cos^2(\omega)x^{s+1}}\bigg\}.
		\]
		In particular, $\E[e^{-x\eta_+}]\le \min\{1,(\pi\cos(\omega)\rho x)^{-1}\}$.
	\end{lem}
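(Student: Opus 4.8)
The plan is to compute $\E[\eta_+^s e^{-x\eta_+}]$ directly from the explicit density. Recall from the start of Appendix~\ref{app:cauchy} that when $\alpha=1$ the variable $\eta_+$ is distributed as $S_1$ conditioned on $\{S_1>0\}$, that $S_1$ has density $p(y)=\frac{\cos(\omega)/\pi}{\cos^2(\omega)+(y-\sin(\omega))^2}$, and that $\p(S_1>0)=\rho$. Hence $\eta_+$ has density $y\mapsto\frac{\cos(\omega)/(\pi\rho)}{\cos^2(\omega)+(y-\sin(\omega))^2}\1_{\{y>0\}}$, so that
\[
\E[\eta_+^s e^{-x\eta_+}]
=\frac{\cos(\omega)}{\pi\rho}\int_0^\infty
\frac{y^s e^{-xy}}{\cos^2(\omega)+(y-\sin(\omega))^2}\,dy,
\]
and the whole lemma reduces to bounding this integral in two complementary ways.

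First I would drop the exponential factor via $e^{-xy}\le 1$, which produces the first term inside the minimum; the only thing to check is that $\int_0^\infty y^s[\cos^2(\omega)+(y-\sin(\omega))^2]^{-1}\,dy<\infty$, which holds exactly because $s<1$ (the integrand decays like $y^{s-2}$ at infinity). Next I would instead use the trivial lower bound $\cos^2(\omega)+(y-\sin(\omega))^2\ge\cos^2(\omega)$ on the denominator, so that the integral is at most $\cos^{-2}(\omega)\int_0^\infty y^s e^{-xy}\,dy$; the substitution $u=xy$ evaluates the remaining integral to $\Gamma(s+1)x^{-s-1}$, yielding the second term. Taking the minimum of these two bounds proves the displayed inequality.

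For the ``in particular'' statement I would run the same computation with $s=0$, since the density identity above is valid verbatim. The second bound then reads $\frac{\cos(\omega)}{\pi\rho}\cdot\frac{1}{\cos^2(\omega)x}=(\pi\cos(\omega)\rho x)^{-1}$. For the first bound I would evaluate the elementary integral explicitly: with the substitution $t=y-\sin(\omega)$,
\[
\int_0^\infty\frac{dy}{\cos^2(\omega)+(y-\sin(\omega))^2}
=\frac{1}{\cos(\omega)}\Big(\frac{\pi}{2}+\arctan(\tan\omega)\Big)
=\frac{\pi/2+\omega}{\cos(\omega)}=\frac{\pi\rho}{\cos(\omega)},
\]
where I used that $\omega=\pi(\rho-\tfrac12)\in(-\tfrac{\pi}{2},\tfrac{\pi}{2})$ to get $\arctan(\tan\omega)=\omega$ and then $\tfrac{\pi}{2}+\omega=\pi\rho$. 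Multiplying by $\frac{\cos(\omega)}{\pi\rho}$ gives the constant $1$, and the minimum of the two bounds is $\min\{1,(\pi\cos(\omega)\rho x)^{-1}\}$.

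The argument has no genuine obstacle; the only two points needing a little care are the finiteness of the first integral (which is precisely why the hypothesis $s<1$ is imposed) and, in the ``in particular'' case, the explicit arctangent evaluation, where one must invoke $\omega\in(-\pi/2,\pi/2)$ so that $\tan$ is invertible on the relevant range and the sharp constant $1$ emerges.
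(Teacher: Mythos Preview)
Your proof is correct and follows essentially the same approach as the paper: write out the conditional density of $\eta_+$, then bound the integral by dropping $e^{-xy}\le 1$ for the first term and by $\cos^2(\omega)+(y-\sin(\omega))^2\ge\cos^2(\omega)$ for the second. You actually give more detail than the paper does, in particular the explicit arctangent evaluation for the $s=0$ case, which the paper leaves implicit.
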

	
	\begin{proof}
		Observe that for any $s\in (0,1)$, it holds that 
		\begin{align*}
			\E[\eta_+^se^{-x\eta_+}]=&\frac{\cos(\omega)}{\pi\rho}\int_0^\infty \frac{y^s}{\cos^2(\omega)+(y-\sin(\omega))^2}e^{-xy}dy\\
			\leq&\frac{\cos(\omega)}{\pi\rho}\min\left\{\int_0^\infty \frac{y^s}{\cos^2(\omega)+(y-\sin(\omega))^2}dy,
			\frac{\Gamma(s+1)}{\cos^2(\omega) x^{s+1}}\right\}.\qedhere
		\end{align*}
	\end{proof}
	Given the above change, the statements in Lemma  \ref{lem:inv-mom} are valid with the following changes: For $ u\in [0,1) $ and all other parameters as in in Lemma  \ref{lem:inv-mom} 
	\begin{align*}
		\text{(a)}\quad	\E\bigg[\frac{\ell_{n+1}^r \eta_\pm^u}{X_{+,n}^pX_{-,n}^q}\bigg]
		\leq & CT^{r-\frac{p+q}{\alpha}}(1+r)^{-n}\\
		\text{(b)}\quad	
		\E\bigg[\frac{X_{-,n}^q\ell_{n+1}^r \eta_+^u}{X_{+,n}^p}\bigg]
		\leq&CT^{r+\frac{q-p}{\alpha}}(1+r)^{-n}.
	\end{align*}The Lemmas \ref{lem:bound_X+X-Z} and \ref{lem:inv_bound_X+X-Z} remain unchanged.

	\section*{Acknowledgement}
	JGC and AM are supported by EPSRC grant EP/V009478/1 and The Alan Turing Institute under the EPSRC grant EP/N510129/1; 
    AM was supported by the Turing Fellowship funded by the Programme on Data-Centric Engineering of Lloyd's Register Foundation; 
	AK-H was supported by JSPS KAKENHI Grant Number 20K03666.


\begin{thebibliography}{GCMKH20}
		\bibitem[BC16]{bally}
		Vlad Bally and Lucia Caramellino, \emph{Stochastic integration by parts},
		Advanced Courses in Mathematics - CRM Barcelona, Springer International
		Publishing, 2016.
		
		\bibitem[BR86]{bhat}
		Rabi N. Bhattacharya and R. Ranga Rao,
		Normal Approximation and Asymptotic Expansions
		Classics in Applied Mathematics, 64, SIAM, 1986.
		
		\bibitem[BD09]{Bouleau}
		Nicolas Bouleau and Laurent Denis, \emph{Energy image density property and the
			lent particle method for poisson measures}, Journal of Functional Analysis
		\textbf{257} (2009), no.~4, 1144 -- 1174.
		
		\bibitem[BDP11]{bernyk2011}
		Violetta Bernyk, Robert~C. Dalang, and Goran Peskir, \emph{Predicting the
			ultimate supremum of a stable l\'{e}y process with no negative jumps}, Ann.
		Probab. \textbf{39} (2011), no.~6, 2385–2423.
		
		\bibitem[BGJ87]{bich}
		Klaus Bichteler, Jean-Bernard Gravereaux, and Jean Jacod, \emph{Malliavin
			calculus for processes with jumps}, Stochastic Monographs : Theory and
		Applications of Stochastic Processes, Vol 2, Gordon and Breach Science
		Publishers, 1987.
		
		\bibitem[Bin73]{MR0415780}
		Nicholas~H. Bingham, \emph{Maxima of sums of random variables and suprema of
			stable processes}, Z. Wahrscheinlichkeitstheorie und Verw. Gebiete
		\textbf{26} (1973), no.~4, 273--296. \MR{0415780 (54 \#3859)}
		
		\bibitem[Cha13]{MR3098676}
		Lo\"ic Chaumont, \emph{On the law of the supremum of {L}\'evy processes}, Ann.
		Probab. \textbf{41} (2013), no.~3A, 1191--1217. \MR{3098676}
		
		\bibitem[CM16]{MR3531705}
		Lo\"ic Chaumont and Jacek Ma{\l}ecki, \emph{On the asymptotic behavior of the
			density of the supremum of {L}\'evy processes}, Ann. Inst. Henri Poincar\'e
		Probab. Stat. \textbf{52} (2016), no.~3, 1178--1195. \MR{3531705}
		
		\bibitem[CM21]{MR4216728}
		Lo\"ic Chaumont and Jacek Ma{\l}ecki, \emph{Density behaviour related to {L}\'{e}vy processes}, Trans. Amer. Math. Soc. \textbf{374} (2021), no.~3, 1919--1945. \MR{4216728}
		
		
		\bibitem[CZ16]{MR3500272}
		Zhen-Qing Chen and Xicheng Zhang, \emph{Heat kernels and analyticity of
			non-symmetric jump diffusion semigroups}, Probab. Theory Related Fields
		\textbf{165} (2016), no.~1-2, 267--312. \MR{3500272}
		
		\bibitem[Dar56]{MR80393}
		Donald~A. Darling, \emph{The maximum of sums of stable random variables},
		Trans. Amer. Math. Soc. \textbf{83} (1956), 164--169. \MR{80393}
		
		\bibitem[DM15]{MR3379923}
		Krzysztof D\c{e}bicki and Michel Mandjes, \emph{Queues and {L}\'{e}vy
			fluctuation theory}, Universitext, Springer, Cham, 2015. \MR{3379923}
		
		\bibitem[Don08]{MR2402160}
		Ronald~A. Doney, \emph{A note on the supremum of a stable process}, Stochastics
		\textbf{80} (2008), no.~2-3, IMS Lecture Notes---Monograph Series, 151--155.
		\MR{2402160}
		
		\bibitem[DS10]{MR2599201}
		Ronald~A. Doney and Mladen~S. Savov, \emph{The asymptotic behavior of densities
			related to the supremum of a stable process}, Ann. Probab. \textbf{38}
		(2010), no.~1, 316--326. \MR{2599201}
		
		\bibitem[FP10]{fournier2010}
		Nicolas Fournier and Jacques Printems, \emph{Absolute continuity for some
			one-dimensional processes}, Bernoulli \textbf{16} (2010), no.~2, 343--360.
		
		\bibitem[GCMUB19]{MR4032169}
		Jorge~I. Gonz\'{a}lez~C\'{a}zares, Aleksandar Mijatovi\'{c}, and Ger\'{o}nimo
		Uribe~Bravo, \emph{Exact simulation of the extrema of stable processes}, Adv.
		in Appl. Probab. \textbf{51} (2019), no.~4, 967--993. \MR{4032169}
		
		\bibitem[GCMUB22]{LevySupSim}
		Jorge~I. Gonz{\'a}lez~C{\'a}zares, Aleksandar {Mijatovi{\'c}}, and Ger{\'o}nimo
		Uribe~Bravo, \emph{Geometrically convergent simulation of the extrema of
			{L}\'evy processes}, Math. Opr. Res. \textbf{47} (2022), no.~2, 1141-1168.
			
		
		\bibitem[Gil08]{MR2436856}
		Michael~B. Giles, \emph{Multilevel {M}onte {C}arlo path simulation}, Oper. Res.
		\textbf{56} (2008), no.~3, 607--617. \MR{2436856}
		
		\bibitem[Hey69]{MR251766}
		Christopher~C. Heyde, \emph{On the maximum of sums of random variables and the
			supremum functional for stable processes}, J. Appl. Probability \textbf{6}
		(1969), 419--429. \MR{251766}
		
		\bibitem[Kul19]{Kulik}
		Alexei Kulik, \emph{On weak uniqueness and distributional properties of a
			solution to an {SDE} with $\alpha$-stable noise}, Stoch. Proc. Appl.
		\textbf{129} (2019), 473–506.
		
		\bibitem[Kun19]{Kunita}
		Hiroshi Kunita, \emph{Stochastic flows and jump-diffusions}, Springer
		Singapore, 2019.
		
		\bibitem[Kuz11]{MR2789582}
		Alexey Kuznetsov, \emph{On extrema of stable processes}, Ann. Probab.
		\textbf{39} (2011), no.~3, 1027--1060. \MR{2789582}
		
		\bibitem[Kuz13]{MR3005012}
		\bysame, \emph{On the density of the supremum of a stable process}, Stochastic
		Process. Appl. \textbf{123} (2013), no.~3, 986--1003. \MR{3005012}
		
		\bibitem[NN18]{nualart2}
		David Nualart and Eulalia Nualart, \emph{Introduction to {M}alliavin
			{C}alculus}, Institute of Mathematical Statistics Textbooks, p.~158–181,
		Cambridge University Press, 2018.
		
		\bibitem[Nua06]{nual:06}
		David Nualart, \emph{The {M}alliavin calculus and related topics. probability
			and its applications}, Springer-Verlag, New York, 2006.
		
		\bibitem[Pic96]{Picard1}
		Jean Picard, \emph{On the existence of smooth densities for jump processes},
		Probab. Th. Rel. Fields \textbf{105} (1996), 481--511.
		
		\bibitem[Pic10]{Picard2}
		\bysame, \emph{Erratum to: On the existence of smooth densities for jump
			processes}, Probab. Th. Rel. Fields \textbf{147} (2010), 711–713.
		
		\bibitem[PS18]{MR3835481}
		Pierre Patie and Mladen Savov, \emph{Bernstein-gamma functions and exponential
			functionals of {L}\'{e}vy processes}, Electron. J. Probab. \textbf{23}
		(2018), Paper No. 75, 101. \MR{3835481}
		
		\bibitem[PUB12]{MR2978134}
		Jim Pitman and Ger\'onimo Uribe~Bravo, \emph{The convex minorant of a {L}\'evy
			process}, Ann. Probab. \textbf{40} (2012), no.~4, 1636--1674. \MR{2978134}
		
		
		
		\bibitem[Sat13]{Sato}
		Ken-iti Sato, \emph{L\'evy processes and infinitely divisible distributions},
		Cambridge Studies in Advanced Mathematics, vol.~68, Cambridge University
		Press, Cambridge, 2013, Translated from the 1990 Japanese original, Revised
		edition of the 1999 English translation. \MR{3185174}
		
		\bibitem[Sav20]{eSavov}
		Mladen Savov, private communication, July 2020.
		
		\bibitem[Wer96]{Weron}
		Rafa{\l} Weron, \emph{On the {C}hambers-{M}allows-{S}tuck method for simulating
			skewed stable random variables}, Statistics \& Probability Letters
		\textbf{28} (1996), no.~2, 165 -- 171.
		
		\bibitem[Zol86]{MR854867}
		V.~M. Zolotarev, \emph{One-dimensional stable distributions}, Translations of
		Mathematical Monographs, vol.~65, American Mathematical Society, Providence,
		RI, 1986, Translated from the Russian by H. H. McFaden, Translation edited by
		Ben Silver. \MR{854867}
		
		\bibitem[GCKHM22]{Presentation_AM}
		Gonz\'alez C\'azares, Jorge I. and Kohatsu Higa, Arturo and Mijatovi\'c, Aleksandar,
   \emph{Presentation on ``Joint density of the stable process and its supremum: regularity and upper bounds''} (2022), YouTube video
   {\url{https://youtu.be/x0n3Up9CxCA}}.
		
	\end{thebibliography}
\end{document}